\renewcommand\thefigure{\thesection.\@arabic\c@figure}
\renewcommand\thetable{\thesection.\@arabic\c@table}
\newtheorem{theorem}{Theorem}[section]
\newtheorem{lemma}[theorem]{Lemma}
\newtheorem{proposition}[theorem]{Proposition}
\newtheorem{corollary}[theorem]{Corollary}
\newtheorem{remark}[theorem]{Remark}
\newcounter{as}
\newcommand{\mc}[1]{{\mathcal #1}}
\newcommand{\mf}[1]{{\mathfrak #1}}
\newcommand{\mb}[1]{{\mathbf #1}}
\newcommand{\bb}[1]{{\mathbb #1}}
\newcommand{\bs}[1]{{\boldsymbol #1}}
\newcommand{\ms}[1]{{\mathscr #1}}
\newcommand{\dom}{\ms D_0}
\title{From coalescing random walks on a torus to Kingman's
  coalescent}
\author{J. Beltr\'an, E. Chavez, C. Landim}
\address{\noindent IMCA, Calle los Bi\'ologos 245, Urb. San C\'esar
  Primera Etapa, Lima 12, Per\'u and PUCP, Av. Universitaria cdra. 18,
  San Miguel, Ap. 1761, Lima 100, Per\'u.  \newline e-mail: \rm
  \texttt{johel.beltran@pucp.edu.pe} }
\address{\noindent PUCP, Av. Universitaria cdra. 18,
  San Miguel, Ap. 1761, Lima 100, Per\'u.  \newline e-mail: \rm
  \texttt{johel.beltran@pucp.edu.pe} }
\address{\noindent IMPA, Estrada Dona Castorina 110, CEP 22460 Rio de
  Janeiro, Brasil.  \newline e-mail: \rm \texttt{echavez@imca.edu.pe} }
\address{\noindent IMPA, Estrada Dona Castorina 110, CEP 22460 Rio de
  Janeiro, Brasil and CNRS UMR 6085, Universit\'e de Rouen, Avenue de
  l'Universit\'e, BP.12, Technop\^ole du Madril\-let, F76801
  Saint-\'Etienne-du-Rouvray, France.  \newline e-mail: \rm
  \texttt{landim@impa.br} }
\begin{document}

\begin{abstract}
  Let $\bb T^d_N$, $d\ge 2$, be the discrete $d$-dimensional torus
  with $N^d$ points. Place a particle at each site of $\bb T^d_N$ and
  let them evolve as independent, nearest-neighbor, symmetric,
  continuous-time random walks. Each time two particles meet, they
  coalesce into one. Denote by $C_N$ the first time the set of
  particles is reduced to a singleton. Cox \cite{c89} proved the
  existence of a time-scale $\theta_N$ for which $C_N/\theta_N$
  converges to the sum of independent exponential random
  variables. Denote by $Z^N_t$ the total number of particles at time
  $t$. We prove that the sequence of Markov chains
  $(Z^N_{t\theta_N})_{t\ge 0}$ converges to the total number of
  partitions in Kingman's coalescent.
\end{abstract}

\maketitle

\section{Introduction}
\label{sec-1}

Fix $d\ge 2$, and denote by $\bb T^d_N = \{0, \dots, N-1\}^d$ the
discrete, $d$-dimensional torus with $N^d$ points.  Consider
independent, nearest-neighbor, symmetric, continuous-time coalescing
random walks evolving on $\bb T^d_N$. This dynamics can be informally
described as follows. Place a particle at each point of $\bb
T^d_N$. Each particle evolves, independently from the others, as a
continuous-time random walk which jumps from $x$ to $x \pm e_j$ with
probability $1/2d$, where the summation is taken modulo $N$ and
$\{e_1, \dots, e_d\}$ stands for the canonical basis of $\bb R^d$.
Whenever a particle jumps to a site occupied by another particle, the
two particles coalesce into one.

Let $C_N$ be the first time the set of particles is reduced to a
singleton, and let $s_N = N^d$ in dimension $d\ge 3$, $s_N= N^2 \log
N$ in dimension $2$. Cox \cite{c89} proved that $C_N/s_N$ converges in
distribution to a random variable $\tau$ which can be expressed as 
\begin{equation}
\label{eqf1}
\tau \;=\; \sum_{k \ge 2} T_k \;,
\end{equation}
where $(T_k)_{k\ge 2}$ is a sequence of independent, exponential random
variables whose expectations are given by
\begin{equation*}
E[T_{n}] \;=\; \frac 2{n\, (n-1)}\; ,\quad
\text{for } n\geq 2 \;.
\end{equation*}

This result directs us to Kingman's coalescent \cite{k82}, a dynamic
which describes a continuous-time Markov process on the equivalence
relations of
\begin{equation*}
\bb N \;=\; \{1, 2, \dots \}\;.
\end{equation*}
Here we focus our interest in the process $(\ms N_{t})_{t\geq 0}$
which records the number of equivalence classes in Kingman's
coalescent. The process $\ms N_{t}$ is a pure death process on
$\mathbb{N}\cup\{\infty\}$, starting at $\infty$, finite at any
positive time, and jumping from $k$ to $k-1$ at rate $k(k-1)/2$.  A
path of $(\ms N_{t})_{t\geq0}$ can be sampled as follows. Recall the
definition of the random variables $(T_n)_{n\ge 2}$, and set
$T_{1}=\infty$. Note that with probability one
$\sum_{n=2}^{\infty}T_{n}<\infty$ and so
\begin{equation*}
\Big[\sum_{n=k+1}^{\infty}T_{n},\sum_{n=k}^{\infty}T_{n}\Big),\quad k\in\mathbb{N}\;,
\end{equation*}
turns to be a partition of $(0,\infty)$. Set $\ms N_0 = \infty$ and,
for every $t>0$ and $k\geq1$, define
\begin{equation}\label{nec}
\ms N_t = k \iff \sum_{n=k+1}^{\infty} T_n \le t < \sum_{n=k}^{\infty} T_n \;.
\end{equation}
Notice that this process is not
continuous at $t=0$ unless every neighborhood of $\infty\in\mathbb{N}
\cup\{\infty\}$ has finite complement. 

We shall use an alternative description of this process, more suitable
to our purposes. Consider the bijection
\begin{equation*}
\begin{array}{rcl}
\{1,2,\dots,\infty\}&\to&S:=\{1,1/2,1/3,\dots,0\}\\
x&\mapsto&1/x\;,
\end{array}
\end{equation*}
taking $\infty$ to $0$, and endow $S$ with the standard differential
structure inherited by the real line. The first result of this article
characterizes the law of
\begin{equation}
\label{xx}
\ms X_t = 1/\ms N_t \;,\quad t\ge 0\;, \quad \textrm{(where $1/\infty=0$)}
\end{equation}
as the unique solution of a martingale problem. 

The second main result of the article asserts that in an appropriate
time-scale the process which records the [inverse of the] total number
of particles at a given time converges in the Skorohod topology to
$\ms X_t$. \smallskip

Since Cox' article \cite{c89}, the asymptotic behavior of the
coalescence time $C_N$ has been the subject of several papers.
Consider a connected graph $\bb G_N$ with $N$ vertices. If $\bb G_N$
is the complete graph, the distribution of $C_N$ can be computed
exactly and the process which records the total number of particles is
Markovian. This example is called the mean-field model, and one
expects that, under some mixing conditions on the random walk on the
graph $\bb G_N$, the asymptotic behavior of the coalescence time $C_N$
resembles the one of the mean field model.

Denote by $h_N$ the expected hitting time of a vertex starting from
the stationary distribution, and by $t_N$ the expected meeting time of
two independent random walks over $G_N$, both starting from the
stationarity state. Aldous and Fill \cite[Chapter 14]{af01}
conjectured in Open Problem 12 that under some mixing conditions
$E[C_N]$ is of the same order of $h_N$, as in the mean-field case.

Durrett \cite{dur10} proved mean field behavior in a small world
random graph and Cooper, Frieze and Radzik \cite[Theorem 8]{cfr09} in
random $d$-regular graphs.  Oliveira \cite{Oli12, Oli13} showed that
under some reasonable mixing conditions $C_N/t_N$ converges to $\tau$,
the random time introduced in \eqref{eqf1}, in transitive, reversible,
irreducible Markov chains. \smallskip

Our motivation to consider this problem comes from the theory of
metastable Markov chains. We proposed in \cite{bl2, bl7} a general
method, based on the characterization of Markov processes as solutions
of martingale problems, to show that projections of Markov chains on
smaller state spaces are asymptotically Markovian. Coalescing random
walks fit perfectly in this framework, as it is expected that the total
number of particles evolves asymptotically as Kingman's coalescent.

This article leaves some open questions. It would certainly be
interesting to extend the results presented here to the random graphs
covered by Oliveira \cite{Oli13} or to non-reversible dynamics, but
also to consider the dynamics which keeps track of the total number of
particles which coalesced with each particle present at a given time.
This later dynamics is related to a Wright-Fisher diffusion, already
examined by Cox \cite{c89} and Chen, Choi and Cox \cite{ccc16}.

\section{Notation and Results}
\label{sec-1}

Denote by $p$ the probability measure on $\bb Z^d$ given by 
\begin{equation}
\label{22}
p(x) \;=\; \frac 1{2d} \text{ if $x\in \{\pm e_1, \dots, \pm e_d\}$}\;,
\quad \text{and $p(x)= 0$\, otherwise}\;.
\end{equation}
Let $E_N$ be the family of nonempty subsets of $\bb T^d_N$. The
coalescing random walks introduced in the previous section is the
$E_N$-valued, continuous-time Markov chain, represented by $\{A_N(t) :
t\ge 0\}$, whose generator $L_N$ is given by
\begin{equation}\label{ln}
(L_N f)(A) \;=\; \sum_{x\in A} \sum_{y\not \in A} p(y-x) \{ f(A_{x,y}) -
f(A) \} \;+\; \sum_{x\in A} \sum_{y\in A} p(y-x) \{ f(A_{x}) -
f(A) \}\;,
\end{equation}
where $A_{x,y}$ (resp. $A_x$) is the set obtained from $A$ by
replacing the point $x$ by $y$ (resp. removing the element $x$):
\begin{equation*}
A_{x,y} \;=\; [A \setminus \{x\}] \cup \{y\} \;, \quad
A_{x} \;=\; A \setminus \{x\} \;.
\end{equation*}

\subsection{Kingman's coalescent}
Recall from the previous section the definition of Kingman's
coalescent $(\ms N_{t})_{t\geq0}$ and the definition of the set
$S$. Denote by $D(\mathbb{R}_{+},S)$ the space of $S$-valued,
right-continuous trajectories with left-limits, endowed with the
Skorohod topology.  The respective coordinate maps are denoted by
\begin{equation*}
X_{t}:D(\mathbb{R}_{+},S)\to S\;,\quad t\geq0\;.
\end{equation*}
Consider the canonical filtration
\begin{equation*}
\mc G_{t}:=\sigma(X_{s}:0\leq s\leq t)\;,\quad t\geq0\;.
\end{equation*}
It is known that $\mc G_{\infty}:=\sigma(X_{t}:t\geq0)$ coincides with
the corresponding Borel $\sigma$-field on $D(\mathbb{R}_{+},S)$. Let
$C^{1}(S)$ be the set of functions $f:S\to\mathbb{R}$ of class
$C^{1}$, that is $f\in C^{1}(S)$ is the restriction to $S$ of a
continuously differentiable function defined on a neighborhood of
$S$. For each $f\in C^{1}(S)$ define $\ms Lf:S\to\mathbb{R}$ as
\begin{equation}
\label{def1}
(\ms Lf)(y):=
\begin{cases}
\binom{n}{2}\, \Big\{ f\Big(\frac 1{n-1}\Big) - f\Big(\frac 1n\Big) \Big\}\;,
&\quad\textrm{if } y=\frac{1}{n}\;\textrm{and } n\ge 2\;,\\
0\;,&\quad\textrm{if }y=1\;,\\
(1/2)f'(0)\;,&\quad\textrm{if }y=0\;.
\end{cases}
\end{equation}
The following proposition guarantees existence and uniqueness for the
$\big(C^{1}(S),\ms L\big)$-martingale problem and that $(\ms
X_{t})_{t\geq0}$, defined in \eqref{xx}, provides the unique solution
starting at $0\in S$.

\begin{proposition}
\label{prop1}
For each $x\in S$, there exists a unique solution for the
$\big(C^{1}(S),\ms L\big)$-martingale problem starting at $x$. That
is, there exists a unique probability measure $\ms P_x$ on the
measurable space $\big(D(\bb R_+,S), \mc G_\infty \big)$ such that
$\ms P^{}_x[X_0=x]=1$ and, for every $f\in C^{1}(S)$,
\begin{equation}
\label{24}
f(X_t) - \int_0^t (\ms L f)(X_s)\,ds \;,\quad t\ge 0\;,
\end{equation}
is a $\ms P_x$-martingale with respect to $(\mc
G_t)_{t\geq0}$.
Moreover, $\ms P_0$ coincides with the law of $(\ms X_t)_{t\ge 0}$.
\end{proposition}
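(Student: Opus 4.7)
I plan to split into existence and uniqueness, handling $x\neq 0$ and $x=0$ separately within each. For existence at $x=1/n$ with $n$ finite, I would take the pure jump chain on $\{1,1/2,\ldots,1/n\}$ which jumps from $1/k$ to $1/(k-1)$ at rate $\binom{k}{2}$ and is absorbed at $1$. For $x=0$ I will verify that $(\ms X_t)=1/\ms N_t$ from \eqref{xx} satisfies \eqref{24}. The key observation is that for every $f\in C^1(S)$, $\ms L f$ extends continuously to all of $S$: by the mean-value theorem,
\[
(\ms L f)(1/n) \;=\; \binom{n}{2}\bigl[f(1/(n-1))-f(1/n)\bigr] \;=\; \tfrac{1}{2}\,f'(\xi_n),\qquad \xi_n\in(1/n,1/(n-1)),
\]
so $(\ms L f)(1/n)\to \tfrac{1}{2}f'(0)=(\ms L f)(0)$ as $n\to\infty$, and $\ms L f$ is bounded on $S$. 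On any interval $[\epsilon,\infty)$, $\ms X_\epsilon$ is a.s.\ finite, so the martingale property reduces, by conditioning on $\ms X_\epsilon$, to the pure-jump case; letting $\epsilon\to 0$ and using continuity of $f$ at $0$ together with dominated convergence extends the identity to $t\ge 0$.

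For uniqueness, $x=1/n$ finite is the standard bounded-rate pure-jump case. At $x=0$ let $P$ be any solution and set $u_n(t):=P[X_t=1/n]$ for $n\ge 1$, $u_0(t):=P[X_t=0]$. For each $n\ge 1$ I would choose a smooth $f_n$ supported in a compact subset of $\mathbb{R}\setminus\{0\}$ with $f_n(1/n)=1$ and $f_n(1/k)=0$ for finite $k\neq n$; plugging into \eqref{24} gives the forward equations
\[
u_n'(t) \;=\; \binom{n+1}{2}u_{n+1}(t) - \binom{n}{2}u_n(t),\qquad u_n(0)=0,\qquad n\ge 1.
\]
Applying \eqref{24} to $f(y)=y$, for which $\ms L f\equiv 1/2$ on $S\setminus\{1\}$, yields $\mathbb{E}[X_t]=\int_0^t \tfrac{1}{2}(1-u_1(s))\,ds$; on the other hand, differentiating $\mathbb{E}[X_t]=\sum_{n\ge 1}u_n(t)/n$ via the forward equations and reindexing gives $\tfrac{1}{2}\sum_{n\ge 2}u_n(t)=\tfrac{1}{2}(1-u_0(t)-u_1(t))$. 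Matching forces $u_0(t)=0$ for $t>0$, and the forward equations together with the mass constraint $\sum_{n\ge 1}u_n(t)=1$ then determine $(u_n(t))$ uniquely (e.g.\ via Laplace transforms: the recursion $(\lambda+\binom{n}{2})\hat u_n=\binom{n+1}{2}\hat u_{n+1}$ with $\sum_n\hat u_n=1/\lambda$ has a unique solution).

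To promote uniqueness of one-dimensional marginals to uniqueness of $P$, fix $\epsilon>0$: since $X_\epsilon\neq 0$ almost surely, a standard regular-conditional-probability argument shows that the law of $(X_{\epsilon+s})_{s\ge 0}$ conditional on $\mc G_\epsilon$ solves the martingale problem from $X_\epsilon\in\{1/k:k\ge 1\}$, hence is determined by uniqueness from finite starting points; integrating and sending $\epsilon\to 0$ yields $P=\ms P_0$, and $\ms P_0=\mathcal{L}(\ms X)$ is immediate since $\mathcal{L}(\ms X)$ was exhibited as a solution in the existence step. The hardest step I expect is the identification $u_0(t)\equiv 0$: nothing in the pointwise definition of $\ms L$ obviously forces the process to leave $0$ instantaneously, and the accounting via $f(y)=y$ above is precisely what extracts this, modulo a routine sum/derivative interchange relying on $\sum_n|u_n'(t)|/n<\infty$.
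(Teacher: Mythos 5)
Your overall architecture---uniqueness from finite starting points, instantaneous exit from $0$, then a conditioning argument to stitch it together---is the same as the paper's, but the crucial instantaneous-exit step is done quite differently. The paper proves $\ms P\{\sigma=0\}=1$ by a carr\'e du champ argument (Lemmata \ref{comp} and \ref{leh}): it introduces $\Gamma(f,g)=\ms L(fg)-g\,\ms L f-f\,\ms L g$, observes that $\ms L$ acts as a derivation at $0$ so $\Gamma(f,f)(0)=0$, and for the coordinate function $f(x)=x$ gets $E[(M_{t\wedge\sigma})^2]=E[\int_0^{t\wedge\sigma}\Gamma(f,f)(X_s)\,ds]=0$ because $X_s=0$ before $\sigma$. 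Hence $M_{t\wedge\sigma}\equiv 0$ and $X_{t\wedge\sigma}=\tfrac12(t\wedge\sigma)$, which is impossible on $\{t<\sigma\}$ unless $\sigma=0$. Likewise, the paper propagates uniqueness from $0$ by shifting by the stopping times $\sigma_k$ and showing $\ms P\circ\vartheta_{\sigma_k}^{-1}=\ms P_{1/k}$ directly (Proposition \ref{lq2}), which never requires identifying the one-dimensional marginals; your fixed-time conditioning argument does, so it needs both the $u_0\equiv 0$ statement and the separate ODE-uniqueness step.

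The step you call a ``routine sum/derivative interchange'' is precisely the crux, and the sufficient condition you cite is not available a priori. Since $|u_n'(t)|$ is of order $n^2 u_n(t)$, your requirement $\sum_n|u_n'(t)|/n<\infty$ is essentially $\sum_n n\,u_n(t)=E[\,1/X_t\,;\,X_t\neq 0\,]<\infty$, which is a nontrivial property of the unknown solution and is basically what you are trying to establish (Kingman-like behaviour near $0$). The argument can be repaired by avoiding differentiation and working with the integrated forward equations plus an explicit boundary bound. Summing $u_n(t)=\int_0^t[\binom{n+1}{2}u_{n+1}(s)-\binom{n}{2}u_n(s)]\,ds$ telescopes to $\sum_{n=1}^N u_n(t)=\int_0^t\binom{N+1}{2}u_{N+1}(s)\,ds\le 1$, so $\int_0^t u_{N+1}(s)\,ds\le \binom{N+1}{2}^{-1}$. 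Then
\begin{equation*}
\sum_{n=1}^{N}\frac{u_n(t)}{n}\;=\;\int_0^t\Big[\tfrac{N+1}{2}\,u_{N+1}(s)\;+\;\tfrac12\sum_{n=2}^{N}u_n(s)\Big]\,ds\;,
\end{equation*}
and the boundary term is $\le 1/N\to 0$, while monotone convergence handles the rest, yielding $E[X_t]=\int_0^t\tfrac12(1-u_0(s)-u_1(s))\,ds$; matching with $E[X_t]=\int_0^t\tfrac12(1-u_1(s))\,ds$ gives $u_0=0$ a.e., which suffices for your conditioning step along a sequence $\epsilon_k\downarrow 0$. With this fix your route is correct, though heavier and more computational than the paper's carr\'e du champ argument, which sidesteps all tail estimates on the marginals.
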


\subsection{Main result} 
Recall that $E_{N}$ stands for the set of nonempty subsets of
$\bb T_{N}^{d}$.  Consider the partition of $E_{N}$ according to the
number of elements:
\begin{equation}
\label{parti}
E_{N}=\bigcup_{n\in\mathbb{N}}\ms E_{N}^{n}\;,\quad\text{where}
\quad \ms E_{N}^{n}:=\{A\subset \bb T^d_N :|A|=n\}\,,\;\; n\in\bb N\;, 
\end{equation}
and $|A|$ stands for the number of elements of $A$. Let
$\Psi_N:E_N \to S$ be the projection corresponding to partition
\eqref{parti}
\begin{equation*}
\Psi_N(A) = 1/|A|\;,\quad A\in E_{N}\;.
\end{equation*}

For each $A\in E_{N},$ let $\mb P^N_A$ denote a probability measure
under which the process $\big(A_{N}(t)\big)_{t\geq0}$ corresponds to a
coalescing random walk on $\bb T_{N}^{d}$ starting at $A$, i.e. a
Markov chain with state space $E_{N}$ and generator $L_{N}$ (defined
in \eqref{ln}) such that $\mb P_{A}^{N}[A_{N}(0)=A]=1$. When
$A=\bb T^d_N$, we denote $\mb P^N_A$ simply by $\mb P^N$. Expectation
with respect to $\mb P^N_A$, $\mb P^N$ is represented by $\mb E^N_A$,
$\mb E^N$, respectively. 

Consider two independent random walks $(x_{t}^{N})_{t\geq0}$ and
$(y_{t}^{N})_{t\geq0}$ on $\bb T_{N}^{d}$, both with jump probability
given by $p(\cdot)$, starting at the uniform distribution. Let
$\theta_{N}$ be the expected meeting time:
\begin{equation*}
\theta_{N}\;:=\; E\big[\,\min\{t\geq0 : x_{t}^{N}=y_{t}^{N}\}\,\big]\,.
\end{equation*}
Since $x_{t}^{N}-y_{t}^{N}$ evolves as a random walk speeded-up by
$2$, $\theta_{N}$ represents the expectation of the hitting time of
the origin for a simple symmetric random walk speeded-up by $2$ which
starts from the stationary state. By \cite[Proposition 6.10]{bl2}, we
may express this expectation in terms of capacities. Sharp bounds for
the capacity then provide an asymptotic formula for $\theta_N$.  

Consider a continuous-time, random walk $(x_t)_{t\ge 0}$ on $\bb Z^d$
with jump probabilities given by \eqref{22} and which starts from the
origin. Assume that $d\ge 3$, and denote by $\tau_1$ the time of the
first jump, $\tau_1 = \inf\{t\ge 0 : x_t \not = 0\}$, and by $H^{+}$
the return time to the origin:
$H^{+} = \inf\{t\ge \tau_1 : x_t = 0\}$. Let $v_d$ be the escape
probability: $v_d = P[H^{+} = \infty]$. By the argument presented in
the previous paragraph, by \cite[Corollary 6.8]{jlt1} in dimension
$d\ge 3$, and by \cite[Corollary 6.12]{jlt1} in dimension $2$,
\begin{equation}
\label{thetan}
\begin{aligned}
\lim_{N\to \infty} \frac{\theta_{N}}{N^{d}}
\;=\;\frac{1}{2\, v_d}\, \quad \text{in dimension } d\ge 3\;, \\  
\lim_{N\to \infty} \frac{\theta_{N}}{N^{2}\log N} \;=\; 
\frac 1{\pi}  \quad \text{in dimension } d=2\;.
\end{aligned}
\end{equation}
The factor $2$ in the denominator appears because the process has been
speeded-up by $2$.  In particular, in $d=2$, $1/\pi$ should be
understood as $(1/2) (2/\pi)$.

Consider the rescaled reduced process
\begin{equation}\label{redu}
\bb X_N(t) \;=\; \Psi_N(A_N(\theta_N t)) \;, \quad t\ge 0 \;.
\end{equation}
Notice that $\bb X_N(t)$ is not a Markov chain, but only a hidden
Markov chain.  Denote by $\ms P^N$ the probability law on $\big(D(\bb
R_+, S),\mc G_{\infty}\big)$ induced by the reduced process $\big(\bb
X_N(t)\big)_{t\geq0}$ under $\mb P^{N}$ (i.e. starting from all
vertices in $\bb T^d_N$ occupied). The main result of this article
reads as follows

\begin{theorem}
\label{th1}
For every $d\ge 2$, the sequence of measures $\ms P^N$ converges to $\ms P_0$.
\end{theorem}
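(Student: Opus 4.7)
I would follow the standard martingale-problem strategy: (i) prove tightness of $(\ms P^N)$ in $D(\bb R_+, S)$; (ii) identify every subsequential limit $\ms Q$ as a solution of the $(C^1(S), \ms L)$-martingale problem starting at $0$; and (iii) invoke Proposition~\ref{prop1} to conclude that $\ms Q = \ms P_0$ and hence the full sequence converges.

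Since $S$ is compact, tightness reduces to an Aldous-type oscillation estimate. The monotonicity of $\bb X_N(t) = 1/|A_N(\theta_N t)|$ helps: a jump of size larger than $\varepsilon$ in a window of length $h$ forces the number of particles to drop from some $n$ to fewer than $n/(1+\varepsilon n)$ in rescaled time $h$. The probability of such a rapid decrease can be controlled uniformly in $N$ using the strong Markov property and the meeting-time asymptotics~\eqref{thetan}.

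For the identification of the limit, Dynkin's formula applied to $f \circ \Psi_N$ shows that
\[
f(\bb X_N(t)) - \int_0^t \theta_N (L_N (f\circ\Psi_N))(A_N(\theta_N u))\,du
\]
is a $\mb P^N$-martingale. A short calculation---only the second sum in \eqref{ln} changes $|A|$---gives
\[
\theta_N(L_N(f\circ\Psi_N))(A) \;=\; \theta_N R(A)\,[f(1/(n-1)) - f(1/n)]\,\mathbf{1}\{n\ge 2\},
\]
where $n = |A|$ and $R(A) = \sum_{x\in A}\sum_{y\in A\setminus\{x\}} p(y-x)$ is the total coalescence rate. Since $\ms Lf(1/n) = \binom{n}{2}[f(1/(n-1))-f(1/n)]$ for every $2\le n<\infty$ and $\bb X_N$ takes only such values, the task reduces to proving
\[
\mb E^N\!\Big[\int_0^t \big|\theta_N R(A_N(\theta_N u)) - \tbinom{n_u}{2}\big|\,|f(1/(n_u-1)) - f(1/n_u)|\,\mathbf{1}\{n_u\ge 2\}\,du\Big]\longrightarrow 0,
\]
where $n_u = |A_N(\theta_N u)|$. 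I would split this integral at $\sigma_K$, the first entrance of $|A_N|$ into $\{1,\dots,K\}$. On $[0, \sigma_K \wedge t]$ (many particles), the $C^1$ bound $|f(1/(n-1))-f(1/n)|\le \|f'\|_\infty/[n(n-1)]$ together with the telescoping identity $\int_0^{\sigma_K\wedge t}\theta_N R(A_N)/[n(n-1)]\,du = \bb X_N(\sigma_K\wedge t) - \bb X_N(0) \le 1/K$ controls the $\theta_N R$-contribution by $O(1/K)$, while a Kingman comparison bounding $\mb E^N[\sigma_K]$ by $O(1/K)$ handles the $\binom{n}{2}$-contribution. On $[\sigma_K \wedge t, t]$ (few particles), $n_u \le K$, and the desired bound follows once we know, for each $n \in \{2,\dots,K\}$, that $\theta_N R(A_N) \to \binom{n}{2}$ on the event $\{n_u = n\}$.

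The main obstacle is precisely this local-equilibrium replacement: showing that $R(A_N)$, a sensitive function of the spatial arrangement of the $n$ remaining particles, concentrates around its mean-field value $\binom{n}{2}/\theta_N$ once the coalescence dynamics has had time to mix. I would prove it in the spirit of Cox~\cite{c89}: after a mixing time that is negligible on the $\theta_N$ scale, the pairwise distances decorrelate, each pair meets within $\theta_N$ at asymptotically exponential rate~$1$, and the $\binom{n}{2}$ pairs contribute additively. Technically this is where the sharp capacity asymptotics from~\cite{jlt1} and the general metastability framework from~\cite{bl2} come into play. Once the replacement is in hand, the martingale identity passes to the limit along any convergent subsequence, and Proposition~\ref{prop1} completes the proof.
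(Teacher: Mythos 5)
Your three-step plan---tightness, identification of every limit point as a solution of the $(C^1(S),\ms L)$-martingale problem via a replacement lemma, then uniqueness from Proposition~\ref{prop1}---is exactly the paper's architecture, and you have correctly located the local ergodic theorem (the paper's Proposition~\ref{p01}, proved in Section~\ref{sec03}) as the crux. The tightness reduction via the monotone jump structure matches Lemmata~\ref{L4.1}--\ref{lem1}, and the computation of $\theta_N L_N(f\circ\Psi_N)$ matches the paper. There are, however, two departures and two imprecisions. Where you push the martingale identity through for every $f\in C^1(S)$ by splitting at $\sigma_K$, the paper instead restricts to test functions in $\ms D_0$ (constant near $0$), which is precisely why Proposition~\ref{p01} is stated only for eventually-vanishing $F$; the extension to all of $C^1(S)$ is deferred to the uniqueness analysis in Section~\ref{sec5}. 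Your $\sigma_K$-split is a legitimate alternative, but note that the ``telescoping identity'' $\int_0^{\sigma_K\wedge t}\theta_N R/[n(n-1)]\,du = \bb X_N(\sigma_K\wedge t)-\bb X_N(0)$ is \emph{not} a pathwise identity---the two sides differ by a martingale---and holds only in expectation via optional stopping applied to Dynkin's martingale for $f(x)=x$, which is what you actually need; also the bound from Proposition~\ref{p02} gives $\mb E^N[\sigma_K\wedge t]=O(\log K/K)$ rather than $O(1/K)$, which is still enough. Second, the sentence ``once we know that $\theta_N R(A_N)\to\binom{n}{2}$ on $\{n_u=n\}$'' cannot be read pointwise: $\theta_N R(A_N(\cdot))$ is either $0$ or of order $\theta_N$, and what Proposition~\ref{p01} asserts is that the \emph{time-integrated} difference $\theta_N R - \bs\lambda(|A_N|)$ tested against past-measurable bounded observables vanishes. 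Your qualitative description of the mechanism---negligible mixing window $\gamma_N\ll\theta_N$, decorrelation, additive pairwise contributions---is exactly what drives Lemmata~\ref{a05}, \ref{lem9}, and \ref{a02}, so the spirit is right, but the replacement lemma is where almost all of the technical work lies and your proposal treats it as a sketch rather than a proof.
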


It follows from Theorem \ref{th1} that, under $\mb P^{N}$,
\begin{equation*}
\big(\bb X_{N}(t)\big)_{t\geq0}\;\xrightarrow[]
{\textrm{Law}}\;(\ms X_{t})_{t\geq0}\;,\quad\text{for } d\geq2.
\end{equation*}
The scaling limit for the coalescing times obtained in \cite{c89}
immediately follows from these results.

\begin{remark}
\label{rm1}
The proofs apply to the case in which the jump probability $p(\cdot)$
is symmetric and has finite range. It also applies if the initial
condition $\bb T^d_N$ is replaced by a finite set $A=\{x_1, \dots,
x_n\}$ whose points are scattered: $\Vert x_i- x_j\Vert \ge a_N$ for
$1\le i\not = j\le n$, where $a_N$ is the sequence introduced in
\eqref{02}. 
\end{remark}

\subsection{ Sketch of the proof}\label{sket}
The proof is divided in two steps. We first show that the sequence
$(\ms P^N)$ is tight, and then we guarantee uniqueness of limit points
by proving that every limit point solves the
$\big(C^{1}(S), \ms L\big)$-martingale problem.

For the later step, consider a smooth function $f:\bb R\to\bb R$, and
denote by $M_N(t)$ the martingale given by
\begin{equation*}
f(\bb X_N(t)) \,-\, f(\bb X_N(0)) \,-\,
\int_0^t \theta_N \, (L_N f)\big(\Psi_N(A_N(s\theta_N))\big)\, ds\;.
\end{equation*}
Since
\begin{equation*}
(L_N f)(\Psi_N(A)) \;=\; R(A) \, \Big\{ f \Big(\frac x{1-x} \Big) - f(x) \Big\}\;, 
\end{equation*}
where $x=\Psi_N(A)$, and $R(A)$ is the jump rate given by
\begin{equation}
\label{27}
R(A) \;=\; \sum_{x\in A} \sum_{y\in A\setminus\{x\}} p(y-x)\;,
\end{equation}
the martingale $M_N(t)$ can be written as
\begin{equation*}
f(\bb X_N(t)) \,-\, f(\bb X_N(0)) \,-\, \theta_N\, \int_0^{t} \, R(A_N(s\theta_N))\,
\Big\{ f \Big(\frac {\bb X_N(s)}{1- \bb X_N(s)} \Big) - f(\bb X_N(s)) \Big\} \, ds\;.
\end{equation*}

If the martingale $M_N(t)$ were expressed in terms of the process
$\bb X_N$, that is if $\theta_N \, R(A_N(s\theta_N)) = r(\bb X_N(s))$,
we could pass to the limit and argue that
\begin{equation}
\label{28}
f(\bb X(t)) \,-\, f(\bb X(0)) \,-\, \int_0^{t} \, r(\bb X (s))\,
\Big\{ f \Big(\frac {\bb X(s)}{1- \bb X(s)} \Big) - f(\bb X(s)) \Big\} \, ds\;.
\end{equation}
is a martingale for every limit point $\ms P^*$ of the sequence
$\ms P^N$. This result together with the uniqueness of solutions of
the martingale problem \eqref{28} on
$\big(D(\bb R_{+},S), \mc G_{\infty}\big)$ would yield the uniqueness
of limit points.

The previous argument evidences that the main point of the proof
consists in ``closing'' the martingale $M_N(t)$ in terms of the
reduced process $\bb X_N(s)$, that is, that the major difficulty lies
in the proof of the existence of a function $r: S \to \bb R$ such that
\begin{equation*}
\int_0^{t} \Big\{ \theta_N\,  R(A_N(s\theta_N)) \,-\, r(\bb X_N(s)) \Big\} \,
g (\bb X_N(s)) \, ds\;\longrightarrow \; 0
\end{equation*}
for all smooth functions $g: \bb R \to \bb R$. This is the so-called
``replacement lemma'' or the ``local ergodic theorem''. One has to
replace a function $\theta_N R(A)$ which does not vanish only in a
tiny portion of the state space (in the present context for subsets of
$(\bb T^d_N)^n$ which contain at least two neighborhing points) and
which is very large (here of order $\theta_N)$ when it does not
vanish, by a function of order $1$ in the entire space.

The statement of the local ergodic theorem requires some notation. 
Denote by $D(\bb R_+, E_N)$ the right-continuous trajectories $\omega
: \bb R_+ \to E_N$ wich have left-limits. Let
\begin{equation}
\label{14}
\bs r \Big( \frac 1n \Big) \;:=\;
\bs \lambda(n) \;:=\; \binom{n}{2}\;, \quad n\ge 2\;.
\end{equation}

\begin{proposition}
\label{p01}
Let $F: \bb N \to \bb R$ be a function which eventually vanishes:
there exists $k_0\ge 1$ such that $F(k)=0$ for all $k\ge k_0$. Let
$t_{0}>0$ and let $(B^{N}:D(\bb R_{+},E_{N})\to\mathbb{R};N\geq1)$ be
a sequence of uniformly bounded functions, with each $B^{N}$
measurable with respect to
$\sigma\big(A_{N}(s\theta_{N}):0\leq s\leq t_{0}\big)$. Then, for
every $t>t_{0}$,
\begin{equation*}
\lim_{N\to\infty} \mb E^N \Big[B^{N}\,\int_{t_{0}}^{t}
\big \{\theta_{N} R(A_N(s\theta_{N}))
- \mf n_{s\theta_{N}} \big\} \,  F(|A_N(s\theta_{N})|) \, ds\,\Big]\;=\; 0\;, 
\end{equation*}
where $\mf n_s = \bs \lambda (|A_N(s)|)$.
\end{proposition}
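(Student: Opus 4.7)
The plan is to combine linearity, the strong Markov property at time $t_0\theta_N$, and a compensator identity for coalescence events, reducing the claim to the pointwise convergence of a deterministic function under a Cox-type limit.

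Since $F$ has finite support, writing $F(k) = \sum_{n \ge 2} F(n)\,\mathbf 1_{\{k=n\}}$ and using linearity, it suffices to treat $F = \mathbf 1_{\{\cdot = n\}}$ for an arbitrary fixed $n\ge 2$; in that case $\bs\lambda(|A|)F(|A|) = \binom{n}{2}\mathbf 1_{\{|A|=n\}}$. Let $\sigma_{n-1} := \inf\{u\ge 0 : |A_N(u)| = n-1\}$. Since $(|A_N(u)|)_{u\ge 0}$ is non-increasing and drops by one at each coalescence, when $|A_N(0)|\ge n$ there is exactly one coalescence event with $|A_N(u-)| = n$, namely the jump to $n-1$ at $u=\sigma_{n-1}$. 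Therefore,
\begin{equation*}
\mathbf 1_{\{\sigma_{n-1}\le t\}} \,-\, \int_0^t R(A_N(u))\,\mathbf 1_{\{|A_N(u)|=n\}}\,du, \quad t\ge 0,
\end{equation*}
is a mean-zero martingale under $\mb P_A^N$ for $|A|\ge n$, while both summands vanish when $|A|<n$. After the change of variable $u = s\theta_N$ and an application of the strong Markov property at $t_0\theta_N$ (using that $B^N$ is bounded and $\sigma(A_N(s\theta_N): 0\le s\le t_0)$-measurable), the expectation in the proposition becomes
\begin{equation*}
\mb E^N\big[B^N\,\mu_N(A_N(t_0\theta_N))\big], \quad
\mu_N(A) \,:=\, \mb P_A^N[\sigma_{n-1} \le (t-t_0)\theta_N] \,-\, \binom{n}{2}\,\mb E_A^N\Big[\int_0^{t-t_0}\mathbf 1_{\{|A_N(s\theta_N)|=n\}}\,ds\Big].
\end{equation*}
Note that $\mu_N(A) = 0$ when $|A| < n$ and $|\mu_N(A)| \le 1 + \binom{n}{2}(t-t_0)$ in general.

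Next I would show that $\mu_N(A) \to 0$ for every scattered initial configuration with $|A| = k$ uniformly bounded. For such $A$, by the extension of Cox's result noted in Remark~\ref{rm1}, the joint law of $(\sigma_j/\theta_N)_{n-1\le j < k}$ converges to that of the Kingman hitting times $U_j := \sum_{l=j+1}^k T_l$. A short computation, conditioning on $U_n$ and using the memoryless property of $T_n \sim \mathrm{Exp}(\binom{n}{2})$, yields
\begin{equation*}
\binom{n}{2}\,\mb E\big[(U_{n-1}\wedge(t-t_0)) - (U_n\wedge(t-t_0))\big] \;=\; P[U_{n-1} \le t-t_0],
\end{equation*}
so both terms in $\mu_N(A)$ converge to the common limit $P[U_{n-1}\le t-t_0]$ and hence $\mu_N(A) \to 0$.

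To handle the random argument $A_N(t_0\theta_N)$, I would invoke two facts. First, Cox's theorem \cite{c89} yields tightness of $|A_N(t_0\theta_N)|$: for every $\varepsilon > 0$ there is $K$ such that $\mb P^N[|A_N(t_0\theta_N)| > K] < \varepsilon$ for all large $N$. Second, because a single symmetric walk on $\bb T^d_N$ mixes on a time scale of order $N^2$, which is negligible compared to $\theta_N$ by \eqref{thetan}, the surviving particles at time $t_0\theta_N$ are, conditionally on their number, approximately uniformly distributed on $\bb T^d_N$, so with probability tending to one the configuration $A_N(t_0\theta_N)$ is scattered in the sense of Remark~\ref{rm1}. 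Combining the pointwise convergence of $\mu_N$ with the uniform a priori bound via dominated convergence gives the desired limit. I expect the principal obstacle to lie in this scattering estimate: quantifying the Cox-type convergence uniformly over bounded scattered configurations, and verifying that $A_N(t_0\theta_N)$ is scattered with high probability, will require decoupling estimates for the coalescing dynamics on the intermediate scale between mixing ($N^2$) and coalescence ($\theta_N$).
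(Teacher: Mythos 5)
Your outline is sound and would, once fleshed out, give a correct proof; moreover it is a genuinely different argument from the one in the paper, and a cleaner one. The paper's proof (via Lemmata~\ref{a05}, \ref{lem9}, and Corollary~\ref{lem10}) evaluates $\mb E_A^N\big[\int_0^{\tau_{n-1}}R(A_N(s))\,ds\big]$ directly by transferring the initial condition to the uniform measure on $\ms E^n_N$ via a reversibility identity (Lemma~\ref{lem5}), then computing the resulting sum explicitly: that computation requires the delicate escape-probability estimates of Lemmata~\ref{lem12}--\ref{lem13}. Your compensator identity
\begin{equation*}
\mb E_A^N\Big[\int_0^T R(A_N(u))\,\mathbf 1\{|A_N(u)|=n\}\,du\Big] \;=\; \mb P_A^N\big[\sigma_{n-1}\le T\big]\;,\quad |A|\ge n\;,
\end{equation*}
is exact, and it converts both terms into hitting-time functionals so that the whole question reduces to the Cox-type limit laws already established in Lemma~\ref{lem14} and Proposition~\ref{lem3}; the elementary exponential identity $\binom{n}{2}\mb E[(U_{n-1}\wedge a)-(U_n\wedge a)]=P[U_{n-1}\le a]$ then closes the argument. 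This bypasses the equilibrium computation of Lemma~\ref{lem9} and hence the capacity estimates entirely, replacing them by the normalization already built into the Kingman limit; what you pay instead is the need to establish the \emph{joint} convergence of $(\sigma_j/\theta_N)_{n-1\le j<k}$ uniformly over bounded scattered configurations (which follows from iterating Lemma~\ref{lem14} with Proposition~\ref{lem3} and the strong Markov property -- not from Remark~\ref{rm1}, whose use here would be circular since it refers to the very theorem under proof). Your treatment of the random argument via Proposition~\ref{p02} for tightness of $|A_N(t_0\theta_N)|$ and Corollary~\ref{obsaN} for scattering is exactly what the paper does in its final paragraph of Section~\ref{sec03}.

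One small bug: you assert $\mu_N(A)=0$ when $|A|<n$, but with your formula $\mu_N(A)=\mb P_A^N[\sigma_{n-1}\le(t-t_0)\theta_N]-\binom{n}{2}\mb E_A^N[\int_0^{t-t_0}\mathbf 1\{|A_N(s\theta_N)|=n\}\,ds]$, taking $|A|=n-1$ gives $\sigma_{n-1}=0$, so the first term is $1$ and $\mu_N(A)=1\ne 0$. The quantity you actually need, namely the conditional expectation
$h_N(A)=\mb E_A^N\big[\int_0^{(t-t_0)\theta_N}\{R(A_N(u))-\theta_N^{-1}\binom n2\}\mathbf 1\{|A_N(u)|=n\}\,du\big]$,
does vanish for $|A|<n$ because the integrand does, and it agrees with $\mu_N(A)$ when $|A|\ge n$. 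So the fix is simply to keep $h_N$ rather than the more symmetric-looking $\mu_N$ (equivalently, to use the counting process of jumps from $n$ to $n-1$ in place of $\mathbf 1\{\sigma_{n-1}\le\cdot\}$, so that the indicator also vanishes when the chain starts below level $n$). This is a notational slip, not a gap in the argument.
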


The article is organized as follows. In Section \ref{sec2}, we present
the results on coalescing random walks needed in the proof of
Proposition \ref{p01}, which is presented in the following section.
In Section \ref{teo12}, we prove Theorem \ref{th1} and, in Section
\ref{sec5}, Proposition \ref{prop1}.

\section{Coalescing random walks on $\bb T^d_N$}
\label{sec2}

We present in this section some results on coalescing randoms walks
obtained by Cox \cite{c89}: Propositions \ref{p02}, \ref{lem3} and
\ref{lem4}. We start with some notation.  

Throughout this section, $P^N_x$ represents the distribution of a
$\bb T^d_N$-valued random walk, \emph{speeded-up by $2$}, whose jump
probability is $p(\cdot)$, introduced in \eqref{22}, and initial
position is $x$. Denote by $p_t(x,y) = P^N_x[x(t)=y]$ the transition
probabilities of this process and by $\pi_N$ its stationary state, which is
the uniform measure on $\bb T^d_N$.

The first result, Proposition (4.1) in \cite{c89}, provides a bound on
the expectation of the number of particles still present at time
$t$. Let
\begin{equation*}
g_N(t) \;=\; 
\begin{cases}
N^2 \, t^{-1} \log (1+t) & d=2\;, \\
N^d/t & d\ge 3\;.
\end{cases}
\end{equation*}

\begin{proposition}
\label{p02}
There exists a finite constant $c_d$ such that
\begin{equation*}
\mb E^N [ \, |A_N(t)|\, ] \;\le\; c_d\, \max\{1,  g_N(t)\} 
\end{equation*}
for all $t> 0$, $N\ge 1$.
\end{proposition}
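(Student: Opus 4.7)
The plan is to follow the density-decay strategy of Cox \cite{c89}: derive an ODE for the density and then lower-bound its right-hand side via a pair-correlation estimate. By translation invariance of the initial condition $A_N(0) = \bb T^d_N$ and of the dynamics, the law of $A_N(t)$ under $\mb P^N$ is invariant under translations of the torus, hence
\[
\mb E^N[|A_N(t)|] \;=\; N^d\, \rho_N(t)\,,
\qquad \rho_N(t) \;:=\; \mb P^N[0 \in A_N(t)]\,,
\]
and it suffices to prove $\rho_N(t) \le c_d\, N^{-d}\, \max\{1, g_N(t)\}$.

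Applying the generator $L_N$ in \eqref{ln} to the indicator $f(A) = \mathbf{1}\{0 \in A\}$ and taking expectation gives, after cancellation of the translation-invariant entry and exit rates for pure-translation jumps,
\[
\rho_N'(t) \;=\; -\sum_{z \ne 0} p(z)\, \mb P^N\!\big[\{0, z\} \subseteq A_N(t)\big]\,.
\]
The analytic core of the argument is a lower bound on the nearest-neighbor pair correlation: for $|z| = 1$,
\[
\mb P^N\!\big[\{0, z\} \subseteq A_N(t)\big] \;\ge\; c_d\, \frac{\rho_N(t)^2}{h_N(t)}\,,
\qquad h_N(t) \;:=\; \begin{cases} 1, & d \ge 3, \\ \log(1 + t), & d = 2. \end{cases}
\]
Using the graphical construction $A_N(t) = \{\xi^x_t : x \in \bb T^d_N\}$ for the coalescing walkers $\{\xi^x_s\}$, the event $\{0, z\} \subseteq A_N(t)$ is reduced to the existence of two walkers ending at $0$ and $z$ whose ancestral lineages have remained separated throughout $[0, t]$. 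The probability of such separation for two independent walks on $\bb T^d_N$ starting at adjacent sites is controlled by sharp Green's-function estimates ($d \ge 3$) or potential-kernel estimates ($d = 2$) on the torus --- precisely the tools underlying the asymptotic formula \eqref{thetan} for $\theta_N$.

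Plugging the pair-correlation lower bound into the ODE yields the differential inequality $\rho_N'(t) \le -\tilde c_d\, \rho_N(t)^2/h_N(t)$. Integrating $(1/\rho_N)' \ge \tilde c_d/h_N$ from $0$ to $t$ with $\rho_N(0) = 1$ gives $\rho_N(t) \le h_N(t)/(\tilde c_d\, t)$ for $t \ge 1$, hence $\mb E^N[|A_N(t)|] \le C_d\, g_N(t)$ in this regime. For $t \le 1$ the trivial bound $|A_N(t)| \le N^d$ suffices, since $g_N(t) \asymp N^d$ there; in the regime where $g_N(t) \le 1$, monotonicity of $|A_N(\cdot)|$ together with the existence of a consensus time of order $\theta_N$ guarantees that $\mb E^N[|A_N(t)|]$ remains bounded by an absolute constant. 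The principal obstacle, and the technical heart of the argument, is the pair-correlation lower bound, which requires a delicate two-walker separation analysis together with the classical potential-theoretic estimates for the random walk on $\bb T^d_N$.
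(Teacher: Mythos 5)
The paper does not prove Proposition \ref{p02}: it is imported verbatim as Proposition~(4.1) of Cox \cite{c89} and left without argument, so there is no internal proof here to compare your proposal against. Your attempt should therefore be judged on its own merits as a reconstruction of the cited estimate.

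The reduction to $\rho_N(t)=\mb P^N[0\in A_N(t)]$ by translation invariance, and the exact identity $\rho_N'(t)=-\sum_{z\neq 0}p(z)\,\mb P^N[\{0,z\}\subseteq A_N(t)]$, are both correct (the latter follows from \eqref{ln} after cancelling the $-\rho_N$ and $+\rho_N$ contributions of the translation part of the dynamics using $\sum_z p(z)=1$). The integration and the monotonicity-of-$|A_N(\cdot)|$ bookkeeping are also essentially workable, although for $d=2$ you cannot integrate $(1/\rho_N)'\ge \tilde c_d/\log(1+s)$ from $s=0$ --- the integral diverges at the lower endpoint --- so you must either replace $\log(1+t)$ by something like $\log(2+t)$ in the definition of $h_N$ or start the integration at a fixed $t_1>0$. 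The decisive gap, though, is the pair-correlation \emph{lower} bound $\mb P^N[\{0,z\}\subseteq A_N(t)]\geq c_d\,\rho_N(t)^2/h_N(t)$, which you state as a claim and support only with a one-sentence appeal to the Green's-function/potential-kernel estimates behind \eqref{thetan}. Those estimates control hitting and escape probabilities for a fixed pair of walks; they do not, by themselves, give a lower bound on the two-point occupation probability in terms of the square of the one-point density. Indeed, for coalescing random walks the occupation events $\{x\in A_N(t)\}$ are negatively associated, so the inequality $\mb P^N[\{0,z\}\subseteq A_N(t)]\le \rho_N(t)^2$ is the easy direction; the reverse inequality, with the $t$-dependent correction, is precisely what closes the differential inequality, and establishing it is essentially the entire technical content of the Bramson--Griffeath density estimate that Cox's Proposition~(4.1) rests on. As written, your proposal relocates all of the difficulty of the proposition into this unproved assertion, so it is an outline of a possible proof rather than a proof.
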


Recall from \eqref{parti} that we denote by $\ms E_{N}^{n}$ the
subsets of $\bb T^d_N$ with $n$ elements.  Denote by $\tau_j$, $j\ge 1$,
the time when the process $A_N(t)$ is reduced to a set of $j$
elements:
\begin{equation}
\label{23}
\tau_j \;=\; \inf \big\{ t\ge 0 : |A_N(t)| = j \big\}
\;=\; \inf \big\{ t\ge 0 : A_N(t) \in \ms E_{N}^{j} \big\}\;.
\end{equation}

\begin{lemma}
\label{a04b}
There exists a finite constant $C_0$ such that for all $j\ge2$,
\begin{equation*}
\max_{A \in \ms E_{N}^{j}} 
\frac 1{\theta_N}\, \mb E^N_{A} \big[ \tau_{j-1} \big] 
\;\le\; C_0\;.
\end{equation*}
\end{lemma}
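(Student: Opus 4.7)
The plan is to couple the coalescing dynamics with independent random walks, thereby reducing the estimate to a uniform bound on the expected meeting time of two walks on the torus.

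For $A\in\ms E_N^j$, realize $A_N(\cdot)$ up to $\tau_{j-1}$ by running $j$ independent continuous-time random walks $(X_x(t))_{x\in A}$ with jump law $p(\cdot)$ and $X_x(0)=x$: until two of these walks first collide, the coalescing and independent dynamics coincide, so
\[
\tau_{j-1}\;=\;\min_{x\ne y\in A}T_{xy}\,,\qquad T_{xy}\,:=\,\inf\{t\ge 0\,:\, X_x(t)=X_y(t)\}\,.
\]
Fixing an arbitrary pair $x\ne y$ in $A$, this yields $\mb E^N_A[\tau_{j-1}]\le E[T_{xy}]$. Moreover, $X_x(t)-X_y(t)$ is itself a symmetric nearest-neighbor random walk on $\bb T^d_N$ with total jump rate $2$, which is exactly the walk with law $P^N$ considered throughout this section. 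Consequently $E[T_{xy}]=E^N_{x-y}[H_0]$, where $H_0$ is the hitting time of the origin, and
\[
\max_{A\in\ms E_N^j}\mb E^N_A[\tau_{j-1}]\;\le\;\max_{z\in\bb T^d_N\setminus\{0\}}E^N_z[H_0]\,.
\]
It therefore suffices to bound $\theta_N^{-1}\max_{z}E^N_z[H_0]$ uniformly in $N$. Note that this bound is already independent of $j$, which is essential.

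To establish the hitting-time bound, recall $\theta_N=E^N_{\pi_N}[H_0]$ where $\pi_N$ is the uniform measure on $\bb T^d_N$. Markov's inequality gives $P^N_{\pi_N}[H_0>2\theta_N]\le 1/2$. Let $\sigma_N$ denote the total variation mixing time of the walk on $\bb T^d_N$; in dimension $d\ge 2$ it satisfies $\sigma_N=O(N^2)$, which by \eqref{thetan} is $o(\theta_N)$. Starting from any $z\in\bb T^d_N$, after time $\sigma_N$ the law of the walk lies within total variation distance $1/4$ of $\pi_N$ for $N$ sufficiently large, so by the strong Markov property the conditional probability of hitting the origin during the subsequent interval of length $2\theta_N$ is at least $1/4$. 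Iterating over blocks of length $\sigma_N+2\theta_N$ and applying a geometric trials argument yields $\max_z E^N_z[H_0]\le C\theta_N$ for a constant $C$ independent of $N\ge N_0$; the finitely many small values of $N$ are handled trivially.

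The main obstacle is this last step, the passage from a stationary initial distribution to a worst-case initial point. This is a classical estimate for reversible random walks on vertex-transitive graphs, and relies crucially on the fact that on $\bb T^d_N$ the mixing time is of order $N^2$, which is negligible compared to $\theta_N$ in all dimensions $d\ge 2$.
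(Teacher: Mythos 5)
Your proposal follows the same skeleton as the paper's proof: couple the coalescing dynamics with $j$ independent walks, observe that $\tau_{j-1}\le T_{xy}$ for any fixed pair, pass to the difference walk (a nearest-neighbor walk sped up by $2$), and reduce the estimate to showing $\max_z E^N_z[H_0]\le C\theta_N$. The only divergence is in the final step: the paper simply cites \cite[Proposition 10.13]{lpw09} for the comparison between the worst-case and stationary expected hitting time, whereas you reprove it directly via Markov's inequality, the bound $t^N_{\rm mix}=O(N^2)=o(\theta_N)$, and a geometric-trials argument over blocks of length $\sigma_N+2\theta_N$. Your direct argument is correct (the identity $\theta_N=E^N_{\pi_N}[H_0]$ holds because the difference of two independent uniform starting points is uniform, and the block estimate $P^N_z[H_0\le\sigma_N+2\theta_N]\ge 1/2-1/4=1/4$ is valid), so this is a legitimate, self-contained substitute for the citation. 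The trade-off is explicitness at the cost of a few extra lines; the paper's version is shorter because the needed inequality is textbook material for reversible chains, but nothing is gained or lost in generality.
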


\begin{proof}
Fix two points $x$, $y$ in $A$ and denote by $\tau_{x,y}$ the first
time these particles meet: $\tau_{x,y} = \inf \{ t>0 : x(t) =
y(t)\}$. Since $\tau_{j-1} \le \tau_{x,y}$, and since the difference
$x(t)-y(t)$ evolves as a random walk speeded-up by $2$, the
expectation appearing in the statement of the lemma is bounded by
\begin{equation*}
\max_{x\in \bb T_N} \frac 1{\theta_N}\,  E^N_{x} \big[ H_0 \big]\;,
\end{equation*}
where $H_0$ represents the hitting time of the origin. By
\cite[Proposition 10.13]{lpw09}, this quantity is bounded by a finite
constant independent of $N$.
\end{proof}

It follows from the previous result that for every $j\ge2$,
\begin{equation}
\label{a04}
\lim_{M\to\infty} \limsup_{N\to\infty} \max_{A \in \ms E_{N}^{j}} 
\mb P^N_{A} \big[ \tau_{j-1} \,\ge\, M \,\theta_N \big] \;=\; 0\;.
\end{equation}

Denote by $\Vert \mu - \nu \Vert_{\rm TV}$ the total variation
distance between two probability measures, $\mu$, $\nu$, defined on a
countable state space $E$:
\begin{equation*}
\Vert \mu - \nu \Vert_{\rm TV} \;=\; \frac 12 \, \sum_{a\in E} |\, \mu(a)
- \nu(a)\,|\;.
\end{equation*}

Hereafter, the symbol $\alpha_N \ll \beta_N$, for two non-decreasing
sequences $\alpha_N$, $\beta_N$, means that $\alpha_N/\beta_N \to
0$. Denote by $a_N$ an increasing sequence such that $1\ll a_N \ll
N$. In dimension $2$, assume further that $N/\sqrt{\log N} \ll
a_N$. Denote by $\mf G_N(n,a_N)$ the scattered subsets of $E_N$. These
are the sets $A=\{y_1, \dots, y_n\}$ in $\ms E_{N}^{n}$ such that
\begin{equation}
\label{02}
\min_{i\not = j} |y_i - y_j| \;\ge\; a_N \;.
\end{equation}

\begin{lemma}
\label{lem16}
For every $n \ge 2$, $t>0$,
\begin{equation*}
\lim_{N\to \infty} \max_{A \in \ms E_{N}^{n}} \mb P^N_A \Big[ A_N(t\theta_N)
\not\in \ms E_{N}^{1} \, \cup \, \bigcup_{k=2}^n \mf G_N(k, a_N) \Big] \;=\; 0\;. 
\end{equation*}
\end{lemma}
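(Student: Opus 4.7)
The plan is to bound the probability of the complementary event, namely that $A_N(t\theta_N)$ contains at least two distinct points within distance $a_N$, and show this probability vanishes uniformly in $A\in\ms E_N^n$ for fixed $n\ge 2$. I would realize the coalescing dynamics via the Harris graphical construction: attach to each directed nearest-neighbor edge $(x,y)$ of $\bb T^d_N$ an independent Poisson process of rate $p(y-x)$, and let every particle at $x$ jump to $y$ at each event. This produces labeled trajectories $X^i_t$, $1\le i\le n$, with $X^i_0=x_i$ and $X^i_t=X^j_t$ after labels $i,j$ have merged. The crucial feature is that each $X^i$ is driven only by the clock events at the sites it visits; hence the presence of the other labels does not affect the joint law of $(X^i,X^j)$, which is therefore that of a two-particle coalescing system starting from $(x_i,x_j)$.

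The bad event forces the existence of labels $i\ne j$ with $X^i_{t\theta_N}\ne X^j_{t\theta_N}$ and $|X^i_{t\theta_N}-X^j_{t\theta_N}|<a_N$. The event $\{X^i_{t\theta_N}\ne X^j_{t\theta_N}\}$ means the labels have not merged in $[0,t\theta_N]$, and on this event $(X^i,X^j)|_{[0,t\theta_N]}$ is driven by disjoint Poisson clocks and thus has the law of two independent random walks $(\bar X^i,\bar X^j)$ started at $(x_i,x_j)$ conditioned not to meet. Consequently,
\[
\mb P\bigl[X^i_{t\theta_N}\ne X^j_{t\theta_N},\;|X^i_{t\theta_N}-X^j_{t\theta_N}|<a_N\bigr]\;\le\;\mb P\bigl[|\bar X^i_{t\theta_N}-\bar X^j_{t\theta_N}|<a_N\bigr],
\]
and a union bound over the $\binom{n}{2}$ pairs reduces the lemma to the estimate
\[
\sup_{z\in\bb T^d_N}\;\sum_{w\in\bb T^d_N,\,|w|<a_N} p_{t\theta_N}(z,w)\;\longrightarrow\;0 \qquad\text{as }N\to\infty,
\]
where $p_t$ is the transition kernel of Section~\ref{sec2}: the nearest-neighbor walk on $\bb T^d_N$ speeded up by $2$, which is the law of $\bar X^i-\bar X^j$ started at $x_i-x_j$.

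This last sum is controlled via the decomposition $p_t(z,w)=N^{-d}+(p_t(z,w)-N^{-d})$. The stationary contribution is $O((a_N/N)^d)$, which vanishes since $a_N\ll N$. The remainder is bounded by $2\|p_{t\theta_N}(z,\cdot)-\pi_N\|_{\rm TV}$, which vanishes uniformly in $z$ by the spectral gap estimate on $\bb T^d_N$ (gap of order $N^{-2}$) whenever $t\theta_N/N^2\to\infty$. The latter condition follows from~\eqref{thetan}: $t\theta_N/N^2\sim tN^{d-2}/(2v_d)$ for $d\ge 3$, and $t\theta_N/N^2\sim t(\log N)/\pi$ for $d=2$.

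The principal subtle point is the coupling claim of the first paragraph. One must carefully justify, from the graphical construction, that each label's trajectory is a random function of the clock events only at the sites it visits, so that on the no-merger event the pair $(X^i,X^j)$ decouples both from itself (yielding independent walks) and from the other $n-2$ labels. Once this decoupling is in place, the passage to independent walks is automatic, and the remainder of the proof reduces to the standard mixing estimate on the torus sketched above.
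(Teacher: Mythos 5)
Your proposal is correct and follows essentially the same route as the paper. The paper's proof also reduces to a pair-by-pair bound (stating briefly that since $n$ is finite and the difference of two walks is a walk speeded up by $2$, it suffices to prove the two-particle claim $\lim_N \max_x P^N_x[H_0>t\theta_N,\,|x(t\theta_N)|\le a_N]=0$), and then controls this quantity exactly as you do, by splitting into a stationary contribution $O((a_N/N)^d)$ and a total-variation-to-stationarity remainder that vanishes because $\theta_N\gg t^N_{\rm mix}$; the only cosmetic difference is that the paper applies the Markov property at the half-time $t\theta_N/2$ before comparing to $\pi_N$, whereas you decompose $p_{t\theta_N}(z,\cdot)=\pi_N+(p_{t\theta_N}(z,\cdot)-\pi_N)$ directly, and you spell out the Harris-construction decoupling of each pair (which the paper leaves implicit in its one-line reduction).
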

 
\begin{proof}
Since $n$ is finite and since the difference of two random walks evolves as
a random walk speeded-up by $2$, this assertion follows from the claim
that for  every $t>0$ 
\begin{align*}
& \lim_{N\to \infty} \max_{x\in \bb T^d_N} \mb P^N_{\{0,x\}} \big[ A_N(t\theta_N)
\not\in \ms E_{N}^{1} \cup \mf G_N(2,a_N) \big] \\
&\quad \;=\;
\lim_{N\to \infty} \max_{x\in \bb T^d_N} 
P^N_{x} \big[ H_0 > t\theta_N \,,\, |x(t\theta_N)| \le a_N  \big] \;=\; 0\;. 
\end{align*}
By the Markov property, the previous probability is bounded by
\begin{equation*}
E^N_{x} \Big[ 
P^N_{x(t\theta_N/2)} \big[\, |x(t\theta_N/2)| \le a_N  \big]\, \Big]\;.
\end{equation*}
Recall from the beginning of this section that $\pi_N$ represents the
stationary state of the random walk on $\bb T^d_N$. The previous
expectation is less than or equal to
\begin{equation*}
P^N_{\pi_{N}} \big[ |x(t\theta_N/2)| \le a_N  \big] \;+\;2 \,
\Vert \pi_{N}(\cdot) - p_{t\theta_{N}/2}(x,\cdot) \Vert_{\rm TV}\;,
\end{equation*}
where $p_{t}(x,y)$ represents the transition probabilities of a random
walk evolving on $\bb T^d_N$ speeded-up by $2$. The first term is
bounded by $C_0 (a_N/N)^d\to 0$, while the second one vanishes because
$\theta_N \gg t^N_{\rm mix}$.
\end{proof}

\begin{corollary}
\label{obsaN} 
For every $t>0$,
\begin{equation*}
\lim_{N\to\infty}\mb P^{N}\Big[ A_N(t\theta_N)
\not\in \ms E_{N}^{1} \, \cup \, \bigcup_{k=2}^{N^{d}} \mf G_N(k, a_N)
\Big] \;=\; 0\;.
\end{equation*}
\end{corollary}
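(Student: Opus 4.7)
The plan is to reduce Corollary \ref{obsaN} to Lemma \ref{lem16} by first cutting off a short initial time interval during which the process collapses from $N^d$ particles down to a finite (non-random) number, and then invoking the Markov property to apply the lemma to the resulting configuration with finitely many particles.

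Concretely, fix $\varepsilon>0$ and pick some $s\in(0,t)$, say $s=t/2$. By Proposition \ref{p02}, the expected number of particles at time $s\theta_N$ satisfies
\begin{equation*}
\mb E^{N}\bigl[\,|A_{N}(s\theta_{N})|\,\bigr] \;\le\; c_d\,\max\{1,g_{N}(s\theta_{N})\}\;.
\end{equation*}
Using the asymptotics \eqref{thetan} for $\theta_N$, one checks that in both $d\ge 3$ and $d=2$ the right-hand side is bounded by a constant $C(s)$ depending only on $s$ and $d$. Markov's inequality then gives an integer $M=M(s,\varepsilon)$ with $\mb P^{N}\bigl[\,|A_{N}(s\theta_{N})|>M\,\bigr]<\varepsilon$ uniformly in $N$.

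Next, decompose
\begin{equation*}
\mb P^{N}\Bigl[A_{N}(t\theta_{N})\notin \ms E_{N}^{1}\cup\bigcup_{k=2}^{N^{d}}\mf G_{N}(k,a_{N})\Bigr]
\;\le\;\varepsilon\;+\;\sum_{n=1}^{M}\mb P^{N}\Bigl[|A_{N}(s\theta_{N})|=n,\,A_{N}(t\theta_{N})\notin \ms E_{N}^{1}\cup\bigcup_{k=2}^{N^{d}}\mf G_{N}(k,a_{N})\Bigr].
\end{equation*}
The $n=1$ term vanishes since a singleton is absorbing. For $2\le n\le M$, the strong Markov property at time $s\theta_N$ bounds the corresponding summand by
\begin{equation*}
\max_{A\in\ms E_{N}^{n}}\mb P^{N}_{A}\Bigl[A_{N}((t-s)\theta_{N})\notin \ms E_{N}^{1}\cup\bigcup_{k=2}^{n}\mf G_{N}(k,a_{N})\Bigr]\;,
\end{equation*}
where we used that, starting from a configuration with $n\le M$ particles, the number of particles at any later time is at most $n$, so the union over $k\le N^d$ can be restricted to $k\le n$. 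Lemma \ref{lem16} tells us that each of these finitely many (at most $M$) terms tends to $0$ as $N\to\infty$.

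Putting these pieces together yields
\begin{equation*}
\limsup_{N\to\infty}\mb P^{N}\Bigl[A_{N}(t\theta_{N})\notin \ms E_{N}^{1}\cup\bigcup_{k=2}^{N^{d}}\mf G_{N}(k,a_{N})\Bigr]\;\le\;\varepsilon\;,
\end{equation*}
and since $\varepsilon>0$ is arbitrary, the corollary follows. The only non-routine point is the verification that $c_d g_N(s\theta_N)$ is indeed bounded in $N$ for fixed $s>0$ (in dimension $2$ one must expand $\log(1+s\theta_N)\sim 2\log N$ and cancel it against the $\log N$ in $\theta_N$), but this is immediate from \eqref{thetan}; the rest is a clean Markov-property argument.
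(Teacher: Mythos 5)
Your proof is correct and takes essentially the same route as the paper: split at an intermediate time (here $t\theta_N/2$), use Proposition \ref{p02} plus Markov's inequality to control the event of having more than $M$ particles at that time, then apply the Markov property and Lemma \ref{lem16} to the finitely many configurations with $2\le n\le M$ particles. Your added observations (the $n=1$ term vanishes since singletons are absorbing, and the union over $k\le N^d$ reduces to $k\le n$ by monotonicity of particle number) are exactly the implicit points the paper's shorter proof relies on.
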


\begin{proof}
Fix $t>0$, and let $\ms H_{s}=\big\{A_N(s\theta_N)\not\in \ms
E_{N}^{1} \, \cup \, \bigcup_{k=2}^{N^{d}} \mf G_N(k, a_N) \big\}$,
$s>0$. Clearly, for every $M>0$,
\begin{equation*}
\mb P^{N}[\ms H_{t}]\,\leq\,
\mb P^{N}\big[\, |A_{N}(t\theta_{N}/2)|\leq M\,,\,\ms H_{t} \, \big]
\;+\; \mb P^{N}\big[\, |A_{N}(t\theta_{N}/2)|> M\, \big] \;.
\end{equation*}
By Proposition \ref{p02}, the second term is bounded by $C(d,t)/M$,
where $C(d,t)$ is a constant depending only on $d$ and $t$. Hence,
by the Markov property,
\begin{equation*}
\mb P^{N}[\ms H_{t}]\;\leq\; \max_{2\le k\le M}
\max_{A\in\ms E_{N}^{k}}\mb P_{A}^{N}[\ms H_{t/2}]\;+\;\frac{C(d,t)}{M}\;.
\end{equation*}
By Lemma \ref{lem16}, the first term on the right-hand side vanishes
as $N\to\infty$ for every $M\ge 2$. This proves the corollary.
\end{proof}

\begin{proposition}
\label{lem3}
For every $2\le j<k$,
\begin{equation*}
\lim_{N\to\infty} \max_{A\in \mf G_N(k,a_N)} 
\mb P^N_{A} \big[ A_N(\tau_{j}) 
\not\in \mf G_N(j,a_N) \big] \;=\; 0\;.
\end{equation*}
\end{proposition}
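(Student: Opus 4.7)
The plan is to proceed by induction on $k-j$, where the strong Markov property at the first coalescence time $\tau_{k-1}$ provides the reduction
\[
\mb P^N_A\big[A_N(\tau_j)\notin \mf G_N(j,a_N)\big] \;\le\; \mb P^N_A\big[A_N(\tau_{k-1})\notin \mf G_N(k-1,a_N)\big] \;+\; \max_{B\in \mf G_N(k-1,a_N)} \mb P^N_B\big[A_N(\tau_j)\notin \mf G_N(j,a_N)\big].
\]
The second term vanishes by the inductive hypothesis when $k-1>j$ and is identically zero when $k-1=j$, so everything reduces to the base case $k=j+1$, in which a single coalescence occurs.

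For the base case I would label the $j+1$ particles $x_1,\ldots,x_{j+1}$ and set $\tau_{i,i'}=\inf\{t\ge 0: x_i(t)=x_{i'}(t)\}$, so $\tau_j = \min_{i\ne i'}\tau_{i,i'}$. A union bound over the coalescing pair $\{i,i'\}$ and over a second pair $\{\ell,m\}\ne\{i,i'\}$ witnessing the failure of scatteredness reduces the question to bounding $P(|x_\ell(\tau_{i,i'})-x_m(\tau_{i,i'})|<a_N)$ in two cases: (A) $\{\ell,m\}\cap\{i,i'\}=\emptyset$, in which the relative walk $x_\ell-x_m$ is independent of the stopping time $\tau_{i,i'}$; and (B) $\{\ell,m\}\cap\{i,i'\}$ is a singleton, say $\ell=i$, in which $x_i(\tau_{i,i'})$ is the random meeting point and $x_m$ is an independent walk.

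In both cases I would write the probability as $\int_0^\infty P(|Y(t)|<a_N)\,dF_{\tau_{i,i'}}(t)$, where $Y$ is a random walk on $\bb T^d_N$ starting at distance $\ge a_N$ from a reference point. For $t\ge t^N_{\rm mix}$, mixing forces the integrand to be $O((a_N/N)^d)$. The remaining contribution $P(\tau_{i,i'}\le t^N_{\rm mix})$ vanishes in $d\ge 3$ by the Green function estimate on $\bb Z^d$, which bounds the meeting probability of walks starting at distance $\ge a_N$ by $O(a_N^{2-d})$; and in $d=2$ by a hitting-time estimate of the form $P(\tau_{i,i'}\le t^N_{\rm mix}) \lesssim \log(N/a_N)/\log N$, which vanishes precisely because $a_N\gg N/\sqrt{\log N}$.

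The hard part will be this lower-tail estimate on the meeting time of scattered random walks, particularly in the critical two-dimensional regime where the assumption on $a_N$ is sharp: walkers starting at distance $a_N$ may meet on intermediate time scales $a_N^2 \ll t \ll \theta_N$, and controlling the weight that $F_{\tau_{i,i'}}$ places on this window is exactly what the hypothesis on $a_N$ provides. Once that estimate is in place, the combinatorial union bound is uniform in $A\in\mf G_N(k,a_N)$ because $k$ and $j$ are fixed.
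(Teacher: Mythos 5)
Your proposal is correct but takes a genuinely different route from the paper. The paper's proof is a two-line reduction: it uses \eqref{a04} to restrict attention to the event $\{\tau_j \le M\theta_N\}$ and then cites assertions (3.7) and (3.8) of Cox \cite{c89} directly, outsourcing the real work. You instead give a self-contained argument: induction on $k-j$ via the strong Markov property at $\tau_{k-1}$, reducing to a single coalescence; then a union bound over the coalescing pair and the witnessing pair; then the split of $\int_0^\infty P(|Y(t)|<a_N)\,dF_{\tau_{i,i'}}(t)$ into a short-time part controlled by a lower-tail bound on the meeting time of scattered walks (exactly the paper's Lemma~\ref{lem8}, which invokes equation (6.18) of \cite{jlt1}) and a long-time part controlled by mixing. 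This is essentially what Cox proves, so you are reconstructing the cited lemma rather than citing it; the paper's presentation is shorter but opaque, yours is longer but transparent. Two small imprecisions worth flagging: (i) in Case B the displaced reference point is $x_i(\tau_{i,i'})$, not $x_i(0)$, so ``$Y$ starts at distance $\ge a_N$'' is not literally true; but since your short-time bound is simply $P(\tau_{i,i'}\le\cdot)$ and the long-time bound is mixing, the starting distance of $x_m$ from that reference point is never actually used, so the argument survives; (ii) at $t=t^N_{\rm mix}$ the total-variation distance to $\pi_N$ is only $\le 1/4$, not $o(1)$, so the cutoff should be some $\gamma_N$ with $t^N_{\rm mix}\ll\gamma_N\ll\theta_N$ (as in the paper's \eqref{09}) rather than $t^N_{\rm mix}$ itself; the hitting estimate of Lemma~\ref{lem8} is stated precisely for such a $\gamma_N$.
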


\begin{proof}
Fix $2\le j<k$.  By \eqref{a04}, it is enough to prove that for
all $M>0$,
\begin{equation*}
\lim_{N\to\infty} \max_{A\in \mf G_N(k,a_N)} 
\mb P^N_{A} \big[ A_N(\tau_{j}) \not\in \mf G_N(j,a_N) \,,\,
\tau_{j} \le M\, \theta_N \big] \;=\; 0\;.
\end{equation*}
This is exactly assertions (3.7) and (3.8) in \cite{c89}.
\end{proof}

Denote by $\pi_{N}^{n}$, $n\ge 2$, the uniform measure on $\ms E^n_N$.
Recall the definition of $\bs \lambda(\cdot)$ given in \eqref{14}.
Next proposition is a weak version of \cite[Theorem 5]{c89}.

\begin{proposition}
\label{lem4}
For all $j\ge 2$,
\begin{equation*}
\lim_{N\to\infty} 
\mb P^N_{\pi_{N}^{j}} \big[\, \tau_{j-1} \ge t\, \theta_N \, \big] 
\;=\; e^{ - \bs \lambda (j)\, t} \;.
\end{equation*}
\end{proposition}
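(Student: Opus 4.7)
The plan is to reduce the statement to a meeting-time problem for $j$ independent random walks, and then use an approximate-independence / Poissonization argument to recover the exponential limit with the correct rate $\binom{j}{2}$.

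First, I would couple the coalescing process $A_N(\cdot)$ started from $\pi^j_N$ with a system of $j$ independent random walks $(x^{N,1}(t), \dots, x^{N,j}(t))$ whose initial configuration is uniform over ordered tuples of distinct points in $\bb T^d_N$, which differs from $\pi^j_N$ only by a $j!$ relabeling. The two processes coincide up to the first collision of the independent walks, so
\begin{equation*}
\tau_{j-1}\;=\;\tau^{*}_N\;:=\;\min_{1\le i<k\le j}\tau^{i,k}_N\;,\qquad
\tau^{i,k}_N\;:=\;\inf\{t\ge 0:x^{N,i}(t)=x^{N,k}(t)\}\;.
\end{equation*}
For each pair, the difference $x^{N,i}(t)-x^{N,k}(t)$ evolves as a nearest-neighbor symmetric random walk speeded-up by $2$ on $\bb T^d_N$, and because $j$ is fixed the initial differences are within $O(1/N^d)$ total variation of $\pi_N$.

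Second, I would establish the pairwise statement $\tau^{i,k}_N/\theta_N\to \mathrm{Exp}(1)$ in distribution. This is an Aldous-Brown type hitting-time result: for the difference random walk starting from stationarity, the hitting time of the singleton $\{0\}$ is asymptotically exponential with mean $\theta_N$, as soon as $t^N_{\rm mix}\ll \theta_N$; the latter is guaranteed by the asymptotics of $\theta_N$ in \eqref{thetan} together with the classical mixing-time estimate on $\bb T^d_N$. Equivalently, the capacity formula cited in \cite[Proposition 6.10]{bl2} and invoked just after \eqref{thetan} delivers this convergence.

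Third, and this is the main obstacle, I would upgrade the pairwise convergence to the joint statement $\tau^{*}_N/\theta_N\to \mathrm{Exp}(\binom{j}{2})$ by a blockwise Poissonization. Fix $\epsilon>0$ with $\epsilon\theta_N\gg t^N_{\rm mix}$ and partition $[0,t\theta_N]$ into $\lfloor t/\epsilon\rfloor$ blocks of length $\epsilon\theta_N$. By the Markov property and the mixing-time bound, the joint position of the $j$ walks at the start of each block is, up to vanishing total-variation error, distributed as $\pi^j_N$ conditional on no prior meeting. Conditional on such a start, the probability that \emph{some} pair meets within one block converges to $\binom{j}{2}\epsilon+o(\epsilon)$: the union bound over pairs gives the main $\binom{j}{2}\epsilon$ contribution from the pairwise result, and the probability that two \emph{different} pairs collide within a single block of length $\epsilon\theta_N$ is $o(\epsilon)$ by a second-moment estimate, the difference walks being approximately independent after mixing. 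Iterating over the $\lfloor t/\epsilon\rfloor$ blocks,
\begin{equation*}
\mb P^N_{\pi^j_N}\bigl[\tau^{*}_N>t\theta_N\bigr]\;=\;
\Bigl(1-\binom{j}{2}\epsilon+o(\epsilon)\Bigr)^{\lfloor t/\epsilon\rfloor}+o_N(1)\;,
\end{equation*}
and letting first $N\to\infty$ and then $\epsilon\to 0$ yields $e^{-\binom{j}{2}t}$.

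The hardest ingredient is the third step: producing a rigorous blockwise Poissonization that simultaneously controls (i) the correction for two distinct pairs meeting within the same short block, and (ii) the mixing-based renewal argument that resets the $j$-tuple's law to $\pi^j_N$ at the start of every block. Ingredient (i) rests on a second-moment bound exploiting that the $\binom{j}{2}$ difference walks, after the short mixing window, look almost independent on the relevant time scale; ingredient (ii) is standard given $t^N_{\rm mix}\ll\theta_N$ together with the tightness estimate of Proposition \ref{p02} controlling how quickly the surviving $j$-tuple re-equilibrates. Once both are in hand, the resulting matching upper and lower bounds combine to give the exponential limit with rate $\binom{j}{2}=\bs\lambda(j)$.
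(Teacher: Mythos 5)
The paper gives no proof of Proposition \ref{lem4}; the text immediately above it says ``Next proposition is a weak version of \cite[Theorem 5]{c89}'' and the result is cited from Cox. So there is no in-paper proof to compare against, and your argument has to stand on its own.

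Your blocking/Poissonization outline is the right kind of idea, but the central step contains a genuine gap that you cannot wave away. You assert that at the start of each block the $j$-tuple is, up to vanishing total-variation error, distributed as $\pi^j_N$, conditional on no prior meeting. Using reversibility and stationarity of the $j$ independent walks, the conditional law at time $k\epsilon\theta_N$ given $\{\tau^*>k\epsilon\theta_N\}$ is exactly $\mu_k(x)=\pi^j_N(x)\,P_x[\tau^*>k\epsilon\theta_N]/u_k$ with $u_k=P_{\pi^j_N}[\tau^*>k\epsilon\theta_N]$, and consequently
\begin{equation*}
\|\mu_k-\pi^j_N\|_{\mathrm{TV}}\;=\;\frac{1}{2u_k}\,E_{\pi^j_N}\Bigl[\bigl|P_X[\tau^*>k\epsilon\theta_N]-u_k\bigr|\Bigr]\;.
\end{equation*}
This is small only if the survival probability $P_x[\tau^*>k\epsilon\theta_N]$ concentrates near its mean over $\pi^j_N$-typical $x$ --- which is essentially the content of Lemma~\ref{lem14} (uniform convergence over scattered sets), and that lemma in this paper is deduced \emph{from} Proposition~\ref{lem4}. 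So filling your gap naively is circular. Moreover the bias is one-sided: conditioning on survival pushes mass toward spread-out configurations, so you only get a one-sided comparison of $E_{\mu_k}[P_{\cdot}[\tau^*>\epsilon\theta_N]]$ with $1-\binom{j}{2}\epsilon$ for free. A correct execution must avoid conditioning on the whole past: either use an un-conditioned decomposition as in Lemma~\ref{lf1}, where the error is controlled by the probability of a meeting in a short window $\gamma_N$ with $t^N_{\rm mix}\ll\gamma_N\ll\theta_N$ after an initial thermalization, and show this error does not accumulate across blocks; or invoke the Aldous--Brown approximate-exponentiality for hitting times of a small set by a reversible chain started from stationarity, applied to the product walk on $(\bb T^d_N)^j$ with target $\Delta=\bigcup_{i<k}\{x_i=x_k\}$, with the rate $\binom{j}{2}$ coming from the (approximate) additivity of the capacities of the $\binom{j}{2}$ pair-diagonals. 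Either way, the ``vanishing TV'' assertion needs replacing by an actual estimate, and your second-moment control of two distinct pairs meeting in the same block must be written out --- as it stands, those are the two load-bearing steps and neither is proven.
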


It follows from the previous result that for every $n\ge 1$,
\begin{equation}
\label{04}
\lim_{\delta \to 0} \limsup_{N\to \infty} \mb P^N \big[ \tau_n \le
\delta \theta_N \big] \;=\; 0\;.
\end{equation}
Indeed, fix $n\ge 1$ and consider a set $A\in \ms E_{N}^{n+1}$.  Since
$A\subset \bb T^d_N$,
$\mb P^N [ \tau_n \le \delta \theta_N ] \le \mb P^N_A [ \tau_n \le
\delta \theta_N]$.
Averaging over $A$ with respect to $\pi_{N}^{n+1}$ we obtain
that
$\mb P^N [ \tau_n \le \delta \theta_N ] \le \mb
P^N_{\pi_{N}^{n+1}} [ \tau_n \le \delta \theta_N]$.
By Proposition \ref{lem4}, the previous quantity vanishes as
$N\to\infty$ and then $\delta\to 0$.

Denote by $\gamma_N$ a sequence much larger than the mixing time and
much smaller than the hitting time:
\begin{equation}
\label{09}
t^N_{\text{mix}} \;\ll\; \gamma_N \;\ll\; \theta_N\;.
\end{equation}
Let $(\ell_N : n\ge 1)$ be a sequence such that $1\ll \ell_N \ll N$.
In dimension 2, we assume that $N^\alpha \ll \ell_N \ll N$ for all
$0<\alpha<1$, so that
\begin{equation}
\label{13}
\lim_{N\to\infty} \frac{\ell_N}N \;=\; 0\;, \quad \lim_{N\to\infty}
\frac{\log \ell_N}{\log N} \;=\; 1\;.
\end{equation}
Note that in dimension $2$ the conditions imposed on $\ell_N$ are
weaker than the ones assumed on $a_N$ in \cite[Theorem 4]{c89}.

\begin{lemma}
\label{lem8}
For every $n\ge 2$, 
\begin{equation*}
\lim_{N\to\infty} \max_{A\in \mf G_N (n,\ell_N)}  
\mb P^N_A \big[ \tau_{n-1} \le \gamma_N \big] \;=\; 0 \;.
\end{equation*}
\end{lemma}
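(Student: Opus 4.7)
By the standard coupling that represents the coalescing dynamics via $n$ independent random walks $x_1,\dots,x_n$ (merging two walks at their first meeting), the first coalescence time equals $\tau_{n-1} = \min_{i<j} T_{ij}$, where $T_{ij} = \inf\{t\ge 0 : x_i(t) = x_j(t)\}$ and each $x_i$ starts at $y_i \in A$. The difference $x_i(t) - x_j(t)$ is a $\bb T^d_N$-valued random walk with jump kernel $p(\cdot)$ speeded-up by $2$, starting at $y_i - y_j$ of torus norm at least $\ell_N$; hence $T_{ij}$ has the law of the hitting time $H_0$ of the origin by such a walk. A union bound reduces the lemma to showing
\[
\lim_{N\to\infty}\ \max_{|z|\ge \ell_N}\ P^N_z[H_0 \le \gamma_N] \;=\; 0\,.
\]

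To bound this probability, introduce the local time $L_t := \int_0^t \mathbf{1}\{x(s) = 0\}\, ds$. On the event $\{H_0 \le \gamma_N\}$ one has $L_{2\gamma_N} \ge \int_{H_0}^{H_0 + \gamma_N} \mathbf{1}\{x(s) = 0\}\, ds$, so applying the strong Markov property at $H_0$ yields
\[
E^N_z[L_{2\gamma_N}] \;\ge\; P^N_z[H_0 \le \gamma_N]\cdot E^N_0[L_{\gamma_N}]\,,
\]
whence $P^N_z[H_0 \le \gamma_N] \le E^N_z[L_{2\gamma_N}] / E^N_0[L_{\gamma_N}]$. Since $E^N_w[L_t] = \int_0^t p_s^N(w, 0)\, ds$, the task reduces to estimating the heat kernel $p_s^N$ of the speeded-up walk on $\bb T^d_N$.

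Standard heat-kernel bounds, obtained from the lattice-sum identity $p_s^N(w, 0) = \sum_{k \in \bb Z^d} p_s^{\bb Z^d}(w + kN, 0)$, give $p_s^N(z, 0) \le C s^{-d/2} \exp(-c|z|^2/s)$ for $s \ll N^2$ and $p_s^N(z, 0) \le C/N^d$ for $s \gg t^N_{\mathrm{mix}} \asymp N^2$. In dimension $d \ge 3$, the substitution $u = |z|^2/s$ in the short-time integral yields $E^N_z[L_{2\gamma_N}] \le C \ell_N^{2-d} + C \gamma_N/N^d = o(1)$ by \eqref{thetan} and $\gamma_N \ll \theta_N$, while $E^N_0[L_{\gamma_N}]$ is bounded below by a positive constant. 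In dimension $d = 2$, the same substitution produces $E^N_z[L_{2\gamma_N}] \le C\log(N^2/\ell_N^2) + C\gamma_N/N^2 = o(\log N)$ by \eqref{13} and $\gamma_N \ll \theta_N \asymp N^2\log N$, whereas $E^N_0[L_{\gamma_N}] \ge c \log N$ by integrating the bound $p_s^N(0, 0) \asymp 1/s$ over $[1, N^2]$. In both dimensions the ratio vanishes.

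The main technical work lies in the heat-kernel analysis of the third step. The two-dimensional case is the tightest, since numerator and denominator are both only of logarithmic order and the conclusion depends crucially on the sharp condition $\log \ell_N/\log N \to 1$ imposed in \eqref{13}.
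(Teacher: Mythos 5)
Your proof is correct and begins with the same reduction the paper uses: the union bound over pairs shows that $\mb P^N_A[\tau_{n-1}\le\gamma_N]\le\binom n2\max_{\|z\|\ge\ell_N}P^N_z[H_0\le\gamma_N]$. The paper then closes the argument in one line by citing equation (6.18) of \cite{jlt1}; you instead give a self-contained proof of that single-walk hitting estimate via the local-time comparison $P^N_z[H_0\le\gamma_N]\le E^N_z[L_{2\gamma_N}]/E^N_0[L_{\gamma_N}]$ combined with heat-kernel bounds on the torus obtained by unfolding to $\bb Z^d$. So the decomposition is identical, and the approaches differ only in whether the key estimate is cited or proved. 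Your route is more elementary and makes the mechanism visible: in $d=2$ the numerator is $O\big(\log(N/\ell_N)+\gamma_N/N^2\big)=o(\log N)$ while the denominator is $\gtrsim\log N$, and the (only logarithmic) gap between them is exactly what the condition $\log\ell_N/\log N\to 1$ of \eqref{13} and $\gamma_N\ll\theta_N\asymp N^2\log N$ from \eqref{09} provide. The price is that the sketch leans on the Gaussian upper bound $p_s^N(z,0)\le Cs^{-d/2}e^{-c|z|^2/s}$ and the on-diagonal local-CLT lower bound $p_s^N(0,0)\gtrsim 1/s$ without derivation; these are standard but, to be fully self-contained, should be stated with care (uniformity over torus representatives $|z|\le N/2$, the cutoff $s\lesssim N^2$ separating the Gaussian and equilibrium regimes, and the fact that $\gamma_N\ge N^2$ eventually so the denominator integral really reaches the scale where the $\log N$ accumulates).
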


\begin{proof}
The probability is bounded by
\begin{equation*}
\binom n2\, \max_{\Vert x\Vert \ge \ell_N} P^N_x [H_0 \le
\gamma_N]\; ,
\end{equation*}
where, recall, $H_0$ stands for the hitting time of the origin. Since
$\gamma_N \ll \theta_N$, by equation (6.18) in \cite{jlt1}, this
expression vanishes in the limit.
\end{proof}

In the next lemma we compare the dynamics $A_N(t)$ with the one of
independent random walks.  Fix $n\ge 2$, and denote by $({\bf
  x}_{N}^{n}(t))_{t\ge 0}$, the evolution of $n$ independent random
walks on $\bb T^d_N$ with jump probabilities $p(\cdot)$ given by
\eqref{22}.  The stationary state of this dynamics, denoted by
$\pi_{N}^{\otimes n}$, is the product measure on $[\bb T^d_N]^n$ in
which each component is the measure $\pi_{N}$. 

Denote by $p^{(n)}_{t}({\bf x}, {\bf y})$ the transition probabilities
of ${\bf x}^n_N(t)$, and by $t^{N,n}_{\text{mix}}$ the corresponding
mixing time. Since the dynamics amounts to the evolution of a random
walk on $\bb T^{nd}_N$, there exist constants $0<c(d,n) < C(d,n)
<\infty$ such that $c(d,n)N^2 \le t^{N,n}_{\text{mix}} \le C(d,n) N^2$
(cf. \cite[Section 5.3 and 7.4]{lpw09}).

Denote by $x_{j}(t) \in \bb T^d_N$ the $j$-th coordinate of ${\bf
  x}_{N}^{n}(t)$, $1\le j\le n$.  Up to time $\tau_{n-1}$ the process
$A_N(t)$ evolves as $\{{\bf x}_{N}^{n}(t)\} :=\{x_1(t), \dots,
x_n(t)\}$. More precisely, fix $A = \{a_1, \dots, a_n\} \in \ms
E^n_N$, and let
\begin{equation*}
\ms E_{N}^{\le n} \;:=\; \bigcup_{k=1}^n \ms E_{N}^{k}\;.
\end{equation*}
There exists a probability measure on $D\big(\bb R_{+}, \ms
E^{\le n}_N \times (\bb T^{d}_{N})^{n}\big)$, denoted by
$\widehat{\mb P}^{N}_{A}$, which fulfills the following conditions. The
distribution of the first, resp. second, coordinate corresponds to the
distribution induced by $A_N(t)$, resp.  ${\bf
  x}^n_N(t)$. Furthermore, $A_N(0)=A$, ${\bf x}^n_N(0) = (a_1, \dots,
a_n)$, and $A_N(t)=\{{\bf x}_{N}^{n}(t)\}$ for all $0\le t\le
\tau_{n-1}$, $\widehat{\mb P}^{N}_{A}$ almost surely.

\begin{lemma}
\label{lf1}
Fix $n\ge 2$. Let $F_N: \ms E^{\le n}_N \to \bb R$ be a sequence of
uniformly bounded functions, $\Vert F\Vert := \sup_{N\ge 1} \max_{A\in
  \ms E^{\le n}_N} |F_N(A)| < \infty$, and let $(\beta_N)_{N\ge 1}$
be a non-negative sequence. Then, for every $A=\{a_1, \dots a_n\}\in
\ms E^n_N$,
\begin{align*}
& {\mb E}^N_A \Big[ F_N(A_N(\beta_N)) 
\, \mf 1\{ \tau_{n-1} > \beta_N \} \, \Big]\;-\;
E_{\pi^n_N} \big[ F_N \big] \\
& \quad =\; -\, \widehat{\mb E}^N_A \Big[ F_N(\{{\bf x}_{N}^{n}(\beta_N)\}) 
\, \mf 1\big\{ \tau_{n-1} \le \beta_N \,,\, 
\{{\bf x}_{N}^{n}(\beta_N)\} \in \ms E^n_N\, \big\} \, \Big]
\;+\; R_{N} \;,
\end{align*}
where 
\begin{equation*}
\big| R_{N} \big| \;\le\; \Vert F\Vert\,  \Big\{ 2\, 
\Vert p^{(n)}_{\beta_N}({\bf a}, \cdot) - \pi^{\otimes n}_N(\cdot)
\Vert_{\rm TV}  \;+\; c_N \big\}
\end{equation*}
and $\lim_{N\to \infty} c_N =0$.
\end{lemma}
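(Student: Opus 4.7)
The plan is to exploit the coupling $\widehat{\mb P}^N_A$ to replace, on the event $\{\tau_{n-1}>\beta_N\}$, the coalescing walk by $n$ independent walks, then to replace the time-$\beta_N$ law of the independent tuple by the stationary measure $\pi_N^{\otimes n}$ up to a total variation error, and finally to reconcile the ordered product measure with the unordered uniform measure $\pi_N^n$ by a purely combinatorial correction.

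First I would observe that, under $\widehat{\mb P}^N_A$, the event $\{\tau_{n-1}>\beta_N\}$ forces the coordinates of ${\bf x}_N^n(\beta_N)$ to be distinct, so that $\{{\bf x}_N^n(\beta_N)\}\in\ms E_N^n$, and $A_N(\beta_N)=\{{\bf x}_N^n(\beta_N)\}$. Splitting the indicator $\mf 1\{\{{\bf x}_N^n(\beta_N)\}\in\ms E_N^n\}$ according to whether $\tau_{n-1}>\beta_N$ or $\tau_{n-1}\le\beta_N$ yields
\begin{align*}
& \mb E^N_A\bigl[F_N(A_N(\beta_N))\,\mf 1\{\tau_{n-1}>\beta_N\}\bigr] \\
& \quad =\;\widehat{\mb E}^N_A\bigl[F_N(\{{\bf x}_N^n(\beta_N)\})\,\mf 1\{\{{\bf x}_N^n(\beta_N)\}\in\ms E_N^n\}\bigr] \\
& \quad \;-\;\widehat{\mb E}^N_A\bigl[F_N(\{{\bf x}_N^n(\beta_N)\})\,\mf 1\{\tau_{n-1}\le\beta_N,\,\{{\bf x}_N^n(\beta_N)\}\in\ms E_N^n\}\bigr]\;.
\end{align*}
The last term on the right is exactly the one appearing in the lemma, so it remains to compare the first term with $E_{\pi_N^n}[F_N]$.

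That first term depends only on the marginal law of ${\bf x}_N^n(\beta_N)$, which is $p_{\beta_N}^{(n)}({\bf a},\cdot)$ with ${\bf a}=(a_1,\dots,a_n)$. Writing $h({\bf x}):=F_N(\{{\bf x}\})\,\mf 1\{\{{\bf x}\}\in\ms E_N^n\}$, the elementary bound $|\mu(h)-\nu(h)|\le 2\Vert h\Vert_\infty\,\Vert\mu-\nu\Vert_{\rm TV}$ permits me to replace $p_{\beta_N}^{(n)}({\bf a},\cdot)$ by $\pi_N^{\otimes n}$ at cost $2\Vert F\Vert\,\Vert p_{\beta_N}^{(n)}({\bf a},\cdot)-\pi_N^{\otimes n}\Vert_{\rm TV}$. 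A direct count then relates $\pi_N^{\otimes n}$ to $\pi_N^n$: each $B\in\ms E_N^n$ arises from exactly $n!$ ordered $n$-tuples, each of $\pi_N^{\otimes n}$-mass $N^{-nd}$, so
\begin{equation*}
E_{\pi_N^{\otimes n}}[h]\;=\;\alpha_N\, E_{\pi_N^n}[F_N]\;,\qquad
\alpha_N\;:=\;\prod_{j=0}^{n-1}\Bigl(1-\frac{j}{N^d}\Bigr)\;,
\end{equation*}
and hence the replacement $\pi_N^{\otimes n}\rightsquigarrow\pi_N^n$ produces an error of absolute value at most $\Vert F\Vert\,c_N$ with $c_N:=|\alpha_N-1|\to 0$. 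Summing the two errors yields the bound on $R_N$ asserted in the lemma.

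The argument is essentially bookkeeping once the coupling is in place, so there is no deep obstacle. The only mildly delicate point is the last combinatorial step, where one must distinguish $\pi_N^{\otimes n}$, which is uniform on the ordered configurations $[\bb T^d_N]^n$, from $\pi_N^n$, which is uniform on the unordered sets $\ms E_N^n$; since $n$ is fixed, the discrepancy $|\alpha_N-1|$ is $O(N^{-d})$ and vanishes as $N\to\infty$.
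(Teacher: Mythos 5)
Your proof is correct and follows the same approach as the paper: couple via $\widehat{\mb P}^N_A$, split off the $\{\tau_{n-1}\le\beta_N\}$ correction term, control the replacement of $p^{(n)}_{\beta_N}({\bf a},\cdot)$ by $\pi^{\otimes n}_N$ with a total-variation bound, and then compare $\pi^{\otimes n}_N$ with $\pi^n_N$ via the combinatorial factor. The only cosmetic difference is that you compute the ordered-to-unordered correction $\alpha_N=\prod_{j=0}^{n-1}(1-j/N^d)$ explicitly, whereas the paper packages it as the quantity $R^{(2)}_{N,n}$ in \eqref{feq3} and invokes ``an elementary computation''; these are the same estimate.
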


\begin{proof}
Fix $A=\{a_1, \dots a_n\}\in\ms E^n_N$.
We may rewrite the expectation appearing in the statement of the lemma
as 
\begin{equation*}
\widehat{\mb E}^N_A \Big[ F_N(A_N(\beta_N)) 
\, \mf 1\{ \tau_{n-1} > \beta_N \} \, \Big]\;.
\end{equation*}
Since $A_N(t) = \{{\bf x}_{N}^{n}(t)\}$ in the time interval $[0,
\tau_{n-1}]$, we may replace in the previous equation $A_N(\beta_N)$
by $\{{\bf x}_{N}^{n}(\beta_N)\}$ and then add the indicator function
of the set $\{{\bf x}_{N}^{n}(\beta_N)\} \in \ms E^n_N$. After these
replacements, the previous expression becomes
\begin{align*}
& \widehat{\mb E}^N_A \Big[ F_N(\{{\bf x}_{N}^{n}(\beta_N)\}) 
\, \mf 1\{\, \{{\bf x}_{N}^{n}(\beta_N)\} \in \ms E^n_N  \, \} \Big] \\
&\quad \;-\; \widehat{\mb E}^N_A \Big[ F_N(\{{\bf x}_{N}^{n}(\beta_N)\}) 
\, \mf 1\big\{ \tau_{n-1} \le \beta_N \,,\, 
\{{\bf x}_{N}^{n}(\beta_N)\} \in \ms E^n_N\, \big\} \, \Big] \;.
\end{align*} 

We estimate the first term. Recall that we denote by $p^{(n)}_{t}({\bf
  x}, {\bf y})$ the transition probabilities of ${\bf x}^n_N(t)$. With
this notation, we may write this term as
\begin{equation*}
\sum_{{\bf x} \in [\bb T^d_N]^n} F_N(\{{\bf x}\}) 
\, \mf 1\{\, \{{\bf x}\} \in \ms E^n_N\, \}  
\; \pi^{\otimes n}_N({\bf x}) \;+\; R^{(1)}_N\;,
\end{equation*}
where
\begin{equation*}
\big\vert\, R^{(1)}_N \, \big\vert  \;\le\;
2\, \Vert F\Vert\,
\Vert p^{(n)}_{\beta_N}({\bf a}, \cdot) - \pi^{\otimes n}_N(\cdot)
\Vert_{\rm TV} \;.
\end{equation*}
and ${\bf a} = (a_1, \dots, a_n)$.

To bound the first term of the penultimate formula, recall that we
denote by $\pi^n_N$ the uniform measure on $\ms E^n_N$. Let
\begin{equation}
\label{feq3}
R^{(2)}_{N,n} \;:=\; 
\sum_{A\in \ms E^n_N} \Big|\, \pi^n_N(A) \;-\; \sum_{{\bf x} \in [\bb T^d_N]^n} 
 \mf 1\{ \, \{{\bf x}\} =A \}  \; \pi^{\otimes n}_N(\bf x)\, \Big| \;.
\end{equation}
An elementary computation shows that $\lim_{N\to\infty} R^{(2)}_{N,n}
=0$ for every $n\ge 2$.  The assertion of the lemma follows from the
previous estimates.
\end{proof}

The next lemma is a consequence of \cite[Theorem 5]{c89} in dimension
$d\ge 3$. In dimension $2$ is a slight generalization since our
assumptions on $\ell_N$ are weaker.

\begin{lemma}
\label{lem14}
Let $\ell_N$ be a sequence satisfying the conditions introduced above
\eqref{13}. Then, for all $t>0$,
\begin{equation*}
\lim_{N\to\infty}
\max_{A\in \mf G(n, \ell_N)}  \Big| \, \mb P^N_A \big[ \tau_{n-1}
\ge t \theta_N \big] \,-\, e^{- \bs \lambda(n) \, t} \,\Big| \;=\; 0\;.
\end{equation*}
\end{lemma}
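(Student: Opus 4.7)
The plan is to compare, via Lemma \ref{lf1}, the coalescing dynamics starting from a scattered set with $n$ independent random walks evolved for an intermediate time $\gamma_N$, then to relax the independent walks to the uniform stationary state on $(\bb T^d_N)^n$, and finally to apply Proposition \ref{lem4}. I would choose $\gamma_N$ as in \eqref{09}; this $\gamma_N$ also exceeds the mixing time $t^{N,n}_{\rm mix}$ of $n$ independent walks, which is of the same order $N^2$ as $t^N_{\rm mix}$. Fix $t>0$ and $A\in\mf G_N(n,\ell_N)$. For $N$ large enough, $\gamma_N<t\theta_N$, so the event $\{\tau_{n-1}\ge t\theta_N\}$ implies $\{\tau_{n-1}>\gamma_N\}$, and the strong Markov property at time $\gamma_N$ yields
$$
\mb P^N_A[\tau_{n-1}\ge t\theta_N] \;=\; \mb E^N_A\big[\, \mf 1\{\tau_{n-1}>\gamma_N\}\,F_N(A_N(\gamma_N))\,\big]\;,
$$
where $F_N(B):=\mb P^N_B[\tau_{n-1}\ge t\theta_N-\gamma_N]$ for $B\in\ms E^n_N$, extended by $0$ elsewhere, so that $\Vert F\Vert\le 1$.

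Next I would apply Lemma \ref{lf1} with $\beta_N=\gamma_N$ to rewrite the right-hand side as $E_{\pi^n_N}[F_N]$, modulo two error terms. The first error is bounded by $\mb P^N_A[\tau_{n-1}\le\gamma_N]$, which vanishes uniformly over $A\in\mf G_N(n,\ell_N)$ by Lemma \ref{lem8}. The second, the remainder $R_N$ from Lemma \ref{lf1}, is small because the total variation distance $\Vert p^{(n)}_{\gamma_N}(\mathbf a,\cdot)-\pi^{\otimes n}_N\Vert_{\rm TV}$ is uniformly small in $\mathbf a$ once $\gamma_N\gg t^{N,n}_{\rm mix}$.

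Finally, the expectation against $\pi^n_N$ equals $\mb P^N_{\pi^n_N}[\tau_{n-1}\ge(t-\gamma_N/\theta_N)\theta_N]$, and since $\gamma_N/\theta_N\to 0$, Proposition \ref{lem4} together with the continuity of the exponential distribution (sandwich the argument between $t\pm\varepsilon$, use monotonicity in the threshold, and let $\varepsilon\to 0$) yields convergence to $e^{-\bs\lambda(n)t}$. Combining the three estimates gives the uniform convergence in the statement. The main obstacle I anticipate is making the application of Lemma \ref{lf1} uniform over all scattered initial sets; this is exactly what the combination of the scatteredness hypothesis $A\in\mf G_N(n,\ell_N)$ (through Lemma \ref{lem8}) and the room between the mixing time of the product chain and $\theta_N$ (allowing a valid choice of $\gamma_N$) provides.
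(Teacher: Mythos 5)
Your proposal is correct and follows essentially the same route as the paper's proof: apply the Markov property at an intermediate time $\gamma_N$ chosen via \eqref{09}, define $F_N(B)=\mb P^N_B[\tau_{n-1}\ge t\theta_N-\gamma_N]$, invoke Lemma~\ref{lf1} with $\beta_N=\gamma_N$ to replace the initial condition by $\pi^n_N$, control the error via Lemma~\ref{lem8} and the mixing estimate $\gamma_N\gg t^{N,n}_{\rm mix}$, and finish with Proposition~\ref{lem4} using $\gamma_N\ll\theta_N$. (The paper fixes $\gamma_N=t_NN^2$ in the $d=2$ case and relies on the coupling measure $\widehat{\mb P}^N_A$ when invoking Lemma~\ref{lf1}, which you leave implicit; also, $\gamma_N$ being deterministic, the ordinary Markov property suffices — but these are cosmetic, and your extra sandwich remark for the $N$-dependent threshold is a sound, slightly more explicit version of what the paper leaves implicit.)
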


\begin{proof}
We present the proof in dimension $d=2$. The one in higher dimension
is analogous.  Fix a set $A = \{a_1, \dots, a_n\}$ in
$\mf G_N(n, \ell_N)$ and a sequence $1\ll t_N \ll \log N$. Recall from
the previous lemma the definition of the measure
$\widehat{\mb P}^{N}_{A}$. Since the first coordinate evolves as
$A_N(t)$,
\begin{equation*}
\mb P^N_A \big[ \tau_{n-1} \ge t \theta_N \big] \;=\;
\widehat{\mb P}^{N}_{A} \big[ \tau_{n-1} \ge t \theta_N \big]\;.
\end{equation*}
By the Markov property,
\begin{equation*}
\widehat{\mb P}^N_A \big[ \tau_{n-1} \ge t \theta_N \big] \;=\;
\widehat{\mb E}^N_A \Big[ \widehat{\mb P}^N_{A_N(\gamma_N)} \big[ \tau_{n-1} \ge t \theta_N
- \gamma_N \big] \, \mf 1\{ \tau_{n-1} > \gamma_N \} \, \Big]\;,
\end{equation*}
where $\gamma_N=t_N N^2$. 

We apply Lemma \ref{lf1} with $\beta_N=\gamma_N$ to estimate the
right-hand side. Let $F_N: \ms E^{\le n}_N \to \bb R$ be the function
defined by
\begin{equation*}
F_N(A) \;=\; {\mb P}^N_{A} \big[ \tau_{n-1} \ge t \theta_N - \gamma_N
\big] \;, \quad A\,\in\, \ms E^n_N\;,
\end{equation*}
and $F_N(A) = 0$ for $A\,\not \in\, \ms E^n_N$. By Lemma \ref{lf1},
the right hand side of the penultimate formula is equal to
\begin{equation*}
{\mb P}^N_{\pi^n_N} \big[ \tau_{n-1} \ge t \theta_N - \gamma_N \big]
\;+\; \bs R_N\;,  
\end{equation*}
where
\begin{equation*}
\big| \bs R_N \big| \;\le\;  
\widehat{\mb P}^N_A \big[ \tau_{n-1} \le \gamma_N \big]
\;+\; 2\, \Vert p^{(n)}_{\gamma_N}({\bf a}, \cdot) - \pi^{\otimes n}_N(\cdot)
\Vert_{\rm TV}  \;+\; c_N \;,
\end{equation*}
with $\lim_{N\to\infty} c_N =0$.

Each term of the previous expression is negligible.  In the first one,
we may replace $\widehat{\mb P}^N_A$ by $\mb P^N_A$, and apply Lemma
\ref{lem8} to conclude that this expression vanishes as
$N\to\infty$. The second one also vanishes in the limit because
$\gamma_N\gg t^{N}_{\rm mix}$ and $t^{N,n}_{\rm mix}$ is of the same
order of $t^{N}_{\rm mix}$.  To complete the proof of the lemma, as
$\gamma_N\ll \theta_N$, it remains to apply Proposition \ref{lem4}.
\end{proof}

Recall the properties of the sequence $a_N$ introduced in
\eqref{02}. By the previous result, for all $k>j\ge 2$,
\begin{equation}
\label{06}
\lim_{N\to\infty} \max_{A\in \mf G_N(k,a_N)} \Big|\, 
\mb P^N_A \big[\,  \tau_{j-1}-\tau_j \ge t\, \theta_N \, \big] 
\,-\, e^{ - \bs \lambda (j)\, t}  \, \Big| \;=\; 0 \;.
\end{equation}
Indeed, by Proposition \ref{lem3}, we may intersect the event appearing
inside the probability with the set $\{A_N(\tau_{j}) \in \mf
G_N(j,a_N)\}$. Then, applying the strong Markov property at time
$\tau_j$ we reduce assertion \eqref{06} to Lemma \ref{lem14}.

The next result together with the previous lemma entails the
convergence of $\mb E^N_{A_N} \big[ \tau_{n-1} / \theta_N \big]$ to
$\bs \lambda(n)^{-1}$ for any sequence $A_N\in \mf G(n, \ell_N)$.

\begin{lemma}
\label{lem11}
For every $n\ge 2$, $m\ge 1$, there exists a finite constant $C(n,m)$ such that
for all $N\ge 1$,
\begin{equation*}
\max_{A\in \ms E_{N}^{n}}  \mb E^N_A \big[ \, (\tau_{n-1}/\theta_N)^m \, \big] 
\;\le\; C(n,m) \;.
\end{equation*}
\end{lemma}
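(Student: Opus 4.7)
The plan is to upgrade the first-moment bound of Lemma \ref{a04b} to a uniform geometric tail estimate for $\tau_{n-1}/\theta_N$, and then obtain all moment bounds by integrating the tail. The key structural observation that makes the iteration work is that, since coalescence reduces the number of particles by one at a time, starting from $A \in \ms E_N^n$ the inequality $\tau_{n-1} > t$ forces $|A_N(t)| = n$, so $A_N(t) \in \ms E_N^n$ on the event $\{\tau_{n-1} > t\}$.

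First I would apply Markov's inequality to Lemma \ref{a04b} to get the one-step estimate
\[
\sup_{N\ge 1}\, \max_{A \in \ms E_N^n} \mb P^N_A \big[ \tau_{n-1} > 2\, C_0\, \theta_N \big] \;\le\; \tfrac{1}{2}\;.
\]
Setting $\sigma_N := 2\, C_0\, \theta_N$, I would then apply the Markov property at the deterministic time $k\,\sigma_N$, restricted to the event $\{\tau_{n-1} > k\, \sigma_N\}$. The observation above ensures that $A_N(k\,\sigma_N) \in \ms E_N^n$ on this event, so the one-step estimate may be invoked on the conditional probability of not finishing in the next $\sigma_N$ units of time, yielding
\[
\mb P^N_A \big[\tau_{n-1} > (k+1)\sigma_N \big] \;\le\; \tfrac{1}{2}\, \mb P^N_A \big[\tau_{n-1} > k\, \sigma_N\big]\;.
\]
Iterating this inequality gives the uniform geometric tail bound
\[
\max_{A \in \ms E_N^n} \mb P^N_A \big[\tau_{n-1} > k\, \sigma_N\big] \;\le\; 2^{-k}\;,\qquad k\ge 0\;,
\]
for every $N \ge 1$.

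To conclude, I would express the $m$-th moment as an integral of the tail,
\[
\mb E^N_A\big[(\tau_{n-1}/\theta_N)^m\big] \;=\; m \int_0^\infty s^{m-1}\, \mb P^N_A\big[\tau_{n-1}/\theta_N > s\big]\, ds\;,
\]
and use the geometric bound above with $k=\lfloor s/(2C_0)\rfloor$ to dominate the integrand by an exponentially decaying function of $s$. The resulting integral is finite and depends only on $m$ and the absolute constant $C_0$ from Lemma \ref{a04b}, yielding the required uniform bound $C(n,m)$ (in fact independent of $n$). There is no serious analytic obstacle here; the only point requiring care is the Markov restart, which rests precisely on the one-coalescence-at-a-time observation used above to guarantee that the process remains in $\ms E_N^n$ throughout $[0,\tau_{n-1})$.
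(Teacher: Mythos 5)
Your proof is correct, and it takes a genuinely more elementary route than the paper's. Both proofs rest on the same structural idea: iterate the Markov property over windows of order $\theta_N$, noting that on $\{\tau_{n-1}>s\}$ the process is still in $\ms E_N^n$ (the size can only drop by one at a time, so $|A_N(s)|=n$ before $\tau_{n-1}$), which allows the one-step bound to be re-applied to obtain a geometric tail. The two proofs differ in how they obtain a uniform one-step bound strictly below $1$. You apply Markov's inequality directly to the first-moment estimate of Lemma \ref{a04b} to get the explicit bound $1/2$ at time $2C_0\theta_N$; this uses only Lemma \ref{a04b}, which rests on a standard hitting-time estimate, and nothing else. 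The paper instead invokes Lemma \ref{lf1} (the coupling with $n$ independent walks and the mixing-time comparison) to replace the worst-case starting point by the pseudo-equilibrium $\pi_N^n$, and then Proposition \ref{lem4} to identify the limiting one-step probability as $e^{-\bs\lambda(n)/2}<1$. The paper's route produces the sharper geometric rate $\approx e^{-\bs\lambda(n)/2}$ for $N$ large, which is aesthetically consistent with the limiting exponential law, but this extra precision is not needed for the lemma: a uniform integrability bound is all that matters downstream. Your argument is shorter, avoids the coupling machinery entirely, and yields a constant $C(n,m)$ that is in fact independent of $n$ (since $C_0$ in Lemma \ref{a04b} is uniform in $j$), a marginally stronger conclusion than stated. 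Both proofs are complete; yours is the more efficient one.
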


\begin{proof}
By the Markov property, for all $k\ge 1$,
\begin{equation*}
\max_{A\in \ms E_{N}^{n}} \mb P^N_A [ \tau_{n-1}/\theta_N \ge k] \;\le\;
\Big( \max_{A\in \ms E_{N}^{n}} \mb P^N_A [ \tau_{n-1}/\theta_N \ge 1] \Big)^k\;.
\end{equation*}

We claim that 
\begin{equation}
\label{11}
 \max_{A\in \ms E_{N}^{n}} \mb P^N_A [ \tau_{n-1} \ge \theta_N ] \;\le\;
\mb P^N_{\pi_{N}^{n}}  [\tau_{n-1} \ge \theta_N/2]
\;+\; \delta_N \;.
\end{equation}
where $\delta_N\to 0$.  Indeed, fix
$A=\{a_1, \dots a_n\}\in \ms E_{N}^{n}$, and apply the Markov property
to obtain that
\begin{equation*}
\mb P^N_A [ \tau_{n-1} \ge \theta_N ] \;=\;
\mb E^N_A \Big[ \mb P^N_{A_N(\theta_N/2)}  [\tau_{n-1} \ge
\theta_N/2]\, \mf 1\{ \tau_{n-1} \ge \theta_N/2 \} \, \Big]\;. 
\end{equation*}
Let $F_N: \ms E^{\le n}_N \to \bb R$ be the function
defined by
\begin{equation*}
F_N(A) \;=\; {\mb P}^N_{A} \big[ \tau_{n-1} \ge \theta_N/2
\big] \;, \quad A\,\in\, \ms E^n_N\;,
\end{equation*}
and $F_N(A) = 0$ for $A\,\not \in\, \ms E^n_N$. Since $F_N$ is
non-negative, by Lemma \ref{lf1}, the right-hand side of the
penultimate formula is bounded above by
\begin{equation*}
\mb P^N_{{\pi}_{N}^{n}}  [\tau_{n-1} \ge \theta_N/2] \;+\;  
2\,\Vert p^{(n)}_{\theta_N/2}({\bf a}, \cdot) - {\pi}_{N}^{n}(\cdot) 
\Vert_{\rm TV} \;+\; c_N \;,
\end{equation*} 
where ${\bf a} =(a_1, \dots a_n)$.  Assertion \eqref{11} follows from
the facts that $\theta_N \gg t^N_{\rm mix}$ and that
$t^{N,n}_{\rm mix}$ is of the same order of $t^{N}_{\rm mix}$.

By Proposition \ref{lem4}, under the measure $\mb P^N_{\pi_{N}^{n}}$,
$\tau_{n-1}/\theta_N$ converges weakly to an exponential random
variable of parameter $\bs \lambda(n)$. Thus, the right-hand side
of \eqref{11} converges to $e^{-\bs \lambda(n)/2}<1$. Therefore, there
exists $\delta<1$ such that for all $N\ge 1$,
\begin{equation*}
\max_{A\in \ms E_{N}^{n}} \mb P^N_A [ \tau_{n-1}/\theta_N \ge k] \;\le\;
\delta^k\;.
\end{equation*}
This proves the lemma.
\end{proof}

\begin{corollary}
\label{lem10}
For every $n\ge 2$, 
\begin{equation*}
\lim_{N\to\infty} \max_{A\in \mf G_N(n, \ell_N)}  \Big|\, 
\frac 1{\theta_N} \, \mb E^N_A [ \tau_{n-1} ] 
\, -\, \frac 1{\bs \lambda(n)}\, \Big| \;=\; 0 \;.
\end{equation*}
\end{corollary}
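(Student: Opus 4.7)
The plan is to combine the weak convergence of the law of $\tau_{n-1}/\theta_N$ to an exponential distribution (Lemma \ref{lem14}) with the uniform moment bound (Lemma \ref{lem11}) in order to upgrade the weak convergence to convergence of means, uniformly over the starting set $A\in\mf G_N(n,\ell_N)$.

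First, I would write the expectation using the layer-cake formula
\begin{equation*}
\frac{1}{\theta_N}\,\mb E^N_A[\tau_{n-1}]
\;=\;\int_0^\infty \mb P^N_A\big[\,\tau_{n-1}/\theta_N > t\,\big]\,dt\;,
\end{equation*}
and compare it with $1/\bs\lambda(n)=\int_0^\infty e^{-\bs\lambda(n)t}\,dt$. Splitting the integrals at a threshold $M>0$, it suffices to show that
\begin{equation*}
\lim_{N\to\infty}\max_{A\in\mf G_N(n,\ell_N)}\int_0^M \Big|\,\mb P^N_A\big[\tau_{n-1}/\theta_N>t\big]-e^{-\bs\lambda(n)t}\,\Big|\,dt\;=\;0\;,
\end{equation*}
and that the remainder
\begin{equation*}
\max_{A\in\mf G_N(n,\ell_N)}\int_M^\infty \mb P^N_A\big[\tau_{n-1}/\theta_N>t\big]\,dt\;+\;\int_M^\infty e^{-\bs\lambda(n)t}\,dt
\end{equation*}
can be made arbitrarily small by choosing $M$ large, uniformly in $N$.

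For the tail, I would use Lemma \ref{lem11} with $m=2$: by Markov's inequality,
\begin{equation*}
\mb P^N_A\big[\tau_{n-1}/\theta_N>t\big]\;\le\;\frac{C(n,2)}{t^2}
\end{equation*}
for every $A\in\ms E_N^n$ and every $t>0$, so the tail integral is at most $C(n,2)/M$, uniformly in $N$ and $A$. The corresponding tail of the exponential is $\bs\lambda(n)^{-1}e^{-\bs\lambda(n)M}$, so both contributions vanish as $M\to\infty$.

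For the bounded piece, set
\begin{equation*}
g_N(t)\;:=\;\max_{A\in\mf G_N(n,\ell_N)}\Big|\,\mb P^N_A\big[\tau_{n-1}/\theta_N>t\big]-e^{-\bs\lambda(n)t}\,\Big|\;.
\end{equation*}
Lemma \ref{lem14} asserts that $g_N(t)\to 0$ pointwise in $t>0$, and clearly $g_N(t)\le 1$. The dominated convergence theorem then yields $\int_0^M g_N(t)\,dt\to 0$, which controls the bounded-interval contribution uniformly in $A$. Combining the two estimates, we choose $M$ so large that the tail is below $\varepsilon/2$ and then $N$ large enough so that $\int_0^M g_N(t)\,dt<\varepsilon/2$, completing the proof. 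No real obstacle arises here: the whole content of the argument is the fact that the uniform moment bound of Lemma \ref{lem11} supplies uniform integrability, which is exactly what is needed to turn the weak convergence of Lemma \ref{lem14} into convergence of first moments.
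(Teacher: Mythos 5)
Your argument is correct and follows essentially the same route as the paper: the paper's proof of Corollary \ref{lem10} invokes Lemma \ref{lem14} for the weak convergence of $\tau_{n-1}/\theta_N$ to an exponential law and Lemma \ref{lem11} for uniform integrability, and concludes convergence of means. You have simply made that standard ``weak convergence plus UI implies mean convergence'' step explicit via the layer-cake formula, Chebyshev with the $m=2$ moment bound, and dominated convergence for the bounded piece.
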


\begin{proof}
Fix a sequence $A_N \in \mf G_N(n, \ell_N)$, $N\ge 1$.  The
convergence in law of the sequence $\tau_{n-1}/\theta_N$ under the
measure $\mb P^N_{A_N}$ to an exponential random variable of parameter
$\bs \lambda(n)$ follows from Lemma \ref{lem14}. By the previous lemma
the sequence $\tau_{n-1}/\theta_N$ is uniformly integrable.
\end{proof}

Recall that we denote by $(e_1, \dots, e_d)$ the canonical basis of
$\bb R^d$.

\begin{lemma}
\label{lem12}
Assume that $d\ge 3$ and $n\ge 2$.  Fix a sequence of sets $A_N \in
\ms E_{N}^{n}$ such that $A_N=\{x_N,x_N\pm e_j\}\cup B_N$, where
$B_N\cup\{x_N\}$ belongs to $\mf G_N(n-1, \ell_N)$. For all $t>0$,
\begin{equation}
\label{12}
\lim_{N\to\infty} \mb P^N_{A_N} [ \tau_{n-1} \ge t \theta_N] \;=\;
v_d\, e^{- \bs \lambda(n)  t } \;.
\end{equation}
\end{lemma}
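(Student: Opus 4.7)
\medskip
\textbf{Proposal.} The lemma captures a dichotomy: with limiting probability $1-v_d$ the adjacent pair $\{x_N,x_N\pm e_j\}$ coalesces essentially instantaneously on the scale $\theta_N$, so $\tau_{n-1}$ is negligible and $\{\tau_{n-1}\ge t\theta_N\}$ fails; with limiting probability $v_d$ the pair escapes, leaving a scattered configuration of $n$ particles whose first coalescence occurs after an exponential time of rate $\bs\lambda(n)$, accounting for the factor $e^{-\bs\lambda(n)t}$.

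To make this precise, I would fix a large constant $R$ (to be sent to $\infty$ after $N$) and work under the coupling $\widehat{\mb P}^N_{A_N}$ of Lemma \ref{lf1}, with independent walks $x_1,\dots,x_n$ starting at $x_N$, $x_N\pm e_j$, and the points of $B_N$ respectively, so that $A_N(t)=\{x_1(t),\dots,x_n(t)\}$ for $t\le\tau_{n-1}$. Set
\[
T_{\rm pair}=\inf\{t:x_1(t)=x_2(t)\},\quad T_R=\inf\{t:\|x_1(t)-x_2(t)\|_\infty\ge R\},\quad \sigma_R=T_{\rm pair}\wedge T_R.
\]
The difference $y(t)=x_2(t)-x_1(t)$ is a speeded-up random walk on $\bb T^d_N$ starting at $e_j$; up to $\sigma_R = O(R^2)$ it agrees with the corresponding walk on $\bb Z^d$, and transient-walk theory in $d\ge 3$ gives $\widehat{\mb P}^N_{A_N}[T_R<T_{\rm pair}]\to p_R$ with $p_R\to v_d$ as $R\to\infty$. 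Let $G_R$ denote the event that $\sigma_R\le R^3$ and that no walk $x_i$ with $i\ge 3$ meets another walk during $[0,\sigma_R]$: exponential exit-time bounds handle the first condition, and the Green-function estimate in $d\ge 3$ gives meeting probability $O(\ell_N^{-(d-2)})\to 0$ for walks starting at distance $\ge\ell_N-R$, so $\widehat{\mb P}^N_{A_N}[G_R^c]\to 0$ as $N\to\infty$ for fixed $R$.

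On $\{T_{\rm pair}<T_R\}\cap G_R$ the first coalescence is the pair meeting at $T_{\rm pair}\le R^3\ll\theta_N$, contributing $0$ to $\{\tau_{n-1}\ge t\theta_N\}$; on $\{T_R<T_{\rm pair}\}\cap G_R$ the configuration $A_N(\sigma_R)$ has $n$ elements with all pairwise distances $\ge R$ (for $N$ large), hence lies in $\mf G_N(n,R)$, and the strong Markov property reduces the estimate to bounding $\mb P^N_A[\tau_{n-1}\ge t\theta_N-\sigma_R]$ uniformly over $A\in\mf G_N(n,R)$. The final ingredient is a variant of Lemma \ref{lem14} at the constant scale $R$: its proof adapts verbatim if Lemma \ref{lem8} is replaced by
\[
\mb P^N_A[\tau_{n-1}\le\gamma_N]\;\le\;\binom{n}{2}\max_{\|x\|\ge R}P^N_x[H_0\le\gamma_N]\;\le\; C(n)\,R^{-(d-2)},
\]
valid in $d\ge 3$ via the Green-function bound; this yields $\sup_{A\in\mf G_N(n,R)}|\mb P^N_A[\tau_{n-1}\ge s\theta_N]-e^{-\bs\lambda(n)s}|\le\varepsilon(R)$ for $N\ge N_0(R)$ with $\varepsilon(R)\to 0$. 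Combining with $\sigma_R/\theta_N\to 0$, one obtains $\widehat{\mb P}^N_{A_N}[\tau_{n-1}\ge t\theta_N]=p_R\,e^{-\bs\lambda(n)t}+o_N(1)+O(\varepsilon(R))$, and letting $N\to\infty$ and then $R\to\infty$ proves \eqref{12}. The main obstacle is precisely this adaptation of Lemma \ref{lem14} to constant-scale scattered sets $\mf G_N(n,R)$, with the extra error $\varepsilon(R)$ absorbed by sending $R\to\infty$.
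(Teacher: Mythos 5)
Your proof is conceptually sound and follows the same basic dichotomy as the paper's argument — either the adjacent pair coalesces quickly (probability $\to 1-v_d$), or it escapes, after which the $n$ particles are scattered and the first coalescence is exponential of rate $\bs\lambda(n)$ — but the technical route differs in a meaningful way. The paper exploits the \emph{diverging} scale $\ell_N$ (with $1\ll\ell_N\ll N$) already introduced for Lemma~\ref{lem14}: it sets $H=D_0\wedge D_{\ell_N}$, shows $H\le N^2$ with high probability, observes that on $\{D_0<D_{\ell_N}\}$ one has $\tau_{n-1}\le N^2\ll t\theta_N$ (so the event in \eqref{12} is empty), and that on $\{D_0>D_{\ell_N}\}$ the configuration $A_N(N^2)$ is in $\mf G_N(n,\ell_N)$, where Lemma~\ref{lem14} applies verbatim. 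Since the escape probability to distance $\ell_N\to\infty$ converges directly to $v_d$, the whole argument is a single $N\to\infty$ limit. You instead freeze the escape scale at a constant $R$, which gives a clean interpretation of the escape probability as $p_R\to v_d$, but forces you to (i) take a double limit $N\to\infty$ then $R\to\infty$, and (ii) prove a new variant of Lemma~\ref{lem14} for sets in $\mf G_N(n,R)$ with an error $\varepsilon(R)=O(R^{-(d-2)})$. That variant is indeed obtainable by replacing Lemma~\ref{lem8} with a Green-function bound as you indicate, so the approach is viable; however, the paper's choice of $\ell_N$ is cleverly calibrated to make $H\le N^2\ll\theta_N$, to put $A_N(N^2)$ in the domain of the already-proved Lemma~\ref{lem14}, and to make $\mb P_{A_N}^N[D_0>D_{\ell_N}]\to v_d$ simultaneously — avoiding the extra lemma and the extra limit. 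In short: same idea, but your fixed-$R$ decomposition trades the convenience of reusing Lemma~\ref{lem14} for a somewhat more explicit escape probability, at the cost of an additional uniform-in-$R$ estimate and a nested limit.
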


\begin{proof}
Denote by $x(t)$, $y(t)$ the position at time $t$ of the particle
initially at $x_N$, $x_N\pm e_j$, respectively. Let $D_r$, $r\ge 0$, be the
first time the distance between these particles attains $r$:
$D_r=\inf\{t>0 : \Vert x(t)-y(t)\Vert = r\}$, and let $H= D_0 \wedge
D_{\ell_N}$. As $\ell_N \ll N$, an elementary computation shows that
\begin{equation*}
\lim_{N\to\infty} \mb P^N_{A_N} [ H > N^2 ] \;=\; 0\;.
\end{equation*}
We may therefore insert the set $\{ H \le N^2\}$ in the probability
appearing in equation \eqref{12}. On the event $\{ H \le N^2\}$, when
$tN^{d-2}>1$, we have that $\{D_0<D_{\ell_N}\} \cap \{\tau_{n-1} \ge t
\theta_N\}=\varnothing$. Note that here we used that $d\geq3$. Hence,
\begin{equation*}
\mb P^N_{A_N} [ \tau_{n-1} \ge t \theta_N] \;=\;
\mb P^N_{A_N} \Big[ H \le N^2 \,,\, D_0 > D_{\ell_N} \,,\,
\tau_{n-1} \ge t \theta_N \, \Big]\;+\; o_N(1)\;,
\end{equation*}
where $o_N(1)\to 0$ as $N\to\infty$.

By the Markov property, the probability on the right hand side is
equal to
\begin{equation*}
\mb E^N_{A_N} \Big[ \mf 1\{ H \le N^2 \,,\, D_0 > D_{\ell_N} \,,\, 
\tau_{n-1} \ge N^2 \} \,
\mb P^N_{A_N(N^2)} [\tau_{n-1} \ge t \theta_N - N^2] \, \Big]\;. 
\end{equation*}
On the event $\{\tau_{n-1} \ge N^2\}$, we may replace the distribution
of $A_N(N^2)$ by the one of the position at time $N^2$ of $n$
independent random walks starting from $A_N$.  After this replacement,
we may insert in the expectation the indicator of the set $\{A_N(N^2)
\in \mf G_N(n,\ell_N)\}$ because the probability of the complement
vanishes as $N\to\infty$ [indeed, whatever the initial position of a
random walk, its probability to be a distance $\ell_N$ from the origin
at time $N^2$ vanishes]. After this insertion, we write the previous
expectation as
\begin{equation*}
e^{- \bs \lambda(n) t} \; \mb P^N_{A_N} \Big[ H \le N^2  \,,\, D_0 > D_{\ell_N}  
\,, \, A_N(N^2) \in \mf G_N(n,\ell_N) \,,\,
\tau_{n-1} \ge N^2 \Big] \;+\; R_N \;,
\end{equation*}
where the absolutely value of $R_N$ is bounded by
\begin{equation*}
\max_{A\in \mf G_N(n,\ell_N) }  \Big| \mb P^N_{A} 
[\tau_{n-1} \ge t \theta_N - N^2] \,-\, 
e^{- \bs \lambda(n)  t } \, \Big|\;.
\end{equation*}
By Lemma \ref{lem14}, this expression vanishes as $N\to\infty$. Hence,
up to this point we proved that the probability appearing in
\eqref{12} is equal to
\begin{equation*}
e^{- \bs \lambda(n)  t } \, 
\mb P^N_{A_N} \Big[ H \le N^2  \,,\, D_0 > D_{\ell_N}  
\,, \, A_N(N^2) \in \mf G_N(n,\ell_N) \,,\,
\tau_{n-1} \ge N^2 \Big] \;+\; o_N(1)\;.
\end{equation*}

On the set $\{H \le N^2 \,,\, D_0 > D_{\ell_N} \,,\, \tau_{n-1} \le
N^2 \}$ two particles which were at distance at least $\ell_N$ met in
a time interval of length bounded by $N^2$. Indeed, the time
$\tau_{n-1}$ may correspond to the coalescence of two particles on the
set $B_N$ or one particle in the set $B_N$ and one in the set $\{x_N,
x_N\pm e_j\}$. In both cases, these particles were initially at
distance at least $\ell_N$ from each other. The time $\tau_{n-1}$ may
also correspond to the coalescence of the particles initially at $x_N$,
$x_N\pm e_j$. In this case, at time $H\le N^2\wedge D_0$ these particles
were at distance $\ell_N$.

By Lemma \ref{lem8} with $n=2$, the probability that two particles
which are at distance $\ell_N$ meet before time $N^2$ vanish as
$N\to\infty$. We may therefore remove from the previous probability
the event $\{\tau_{n-1} \ge N^2\}$. We may also remove, as explained
above in the proof, the events $\{H \le N^2\}$ and $\{A_N(N^2) \in \mf
G_N(n,\ell_N)\}$, so that
\begin{equation*}
\mb P^N_{A_N} [ \tau_{n-1} \ge t \theta_N] \;=\;
e^{- \bs \lambda(n)  t } \;
\mb P^N_{A_N} \big[ D_0 > D_{\ell_N} \big] \;+\; o_N(1)\;. 
\end{equation*}
As $N\to\infty$, this latter probability converges to the escape
probability, denoted by $v_d$, which proves the lemma.
\end{proof}


The next result follows from the previous lemma and from the uniform
integrability provided by Lemma \ref{lem11}.

\begin{corollary}
\label{lem18}
Assume that $d\ge 3$ and $n\ge 2$.  Fix a sequence of sets $A_N \in
\ms E_{N}^{n}$ such that $A_N=\{x_N,x_N\pm e_j\}\cup B_N$, where
$B_N\cup\{x_N\}$ belongs to $\mf G_N(n-1,\ell_N)$. Then,
\begin{equation*}
\lim_{N\to\infty} \frac 1{\theta_{N}}\,
\mb E^N_{A_N} [ \tau_{n-1}] \;=\; \frac{v_d}{\bs\lambda(n)}  \;\cdot
\end{equation*}
\end{corollary}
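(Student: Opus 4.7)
The plan is to deduce convergence of the expectation $\mb E^N_{A_N}[\tau_{n-1}/\theta_N]$ from the tail convergence established in Lemma \ref{lem12}, using the uniform second-moment bound of Lemma \ref{lem11} to control the tail uniformly in $N$. Writing $Y_N := \tau_{n-1}/\theta_N$ under $\mb P^N_{A_N}$, Lemma \ref{lem12} yields, for every $t>0$,
\begin{equation*}
\lim_{N\to\infty} \mb P^N_{A_N}[\, Y_N \ge t\, ] \;=\; v_d\, e^{-\bs\lambda(n)\, t}\;.
\end{equation*}
Note the limiting ``survival function'' has a jump of size $1-v_d$ at $0$, reflecting the fact that with asymptotic probability $1-v_d$ the two neighboring particles at $x_N$ and $x_N\pm e_j$ coalesce on a time-scale negligible compared to $\theta_N$.

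The first step is to write the expectation as the integral of the tail,
\begin{equation*}
\mb E^N_{A_N}[Y_N] \;=\; \int_0^\infty \mb P^N_{A_N}[\, Y_N \ge t\, ]\, dt\;,
\end{equation*}
and split this integral at some large level $M$. On the bounded interval $[0,M]$, bounded convergence together with the pointwise tail limit above gives
\begin{equation*}
\lim_{N\to\infty} \int_0^M \mb P^N_{A_N}[\, Y_N \ge t\, ]\, dt
\;=\; \int_0^M v_d\, e^{-\bs\lambda(n)\, t}\, dt\;.
\end{equation*}

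For the remaining tail, the second step is to invoke Lemma \ref{lem11} with $m=2$, which provides a constant $C(n,2)$ such that $\mb E^N_{A_N}[Y_N^2] \le C(n,2)$ uniformly in $N$. By Markov's inequality, $\mb P^N_{A_N}[Y_N \ge t] \le C(n,2)/t^2$, so
\begin{equation*}
\int_M^\infty \mb P^N_{A_N}[\, Y_N \ge t\, ]\, dt \;\le\; \frac{C(n,2)}{M}
\end{equation*}
uniformly in $N$. Letting first $N\to\infty$ and then $M\to\infty$, the two pieces combine to yield
\begin{equation*}
\lim_{N\to\infty} \mb E^N_{A_N}[Y_N] \;=\; \int_0^\infty v_d\, e^{-\bs\lambda(n)\, t}\, dt \;=\; \frac{v_d}{\bs\lambda(n)}\;,
\end{equation*}
which is the desired identity.

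There is no real obstacle here beyond the book-keeping: Lemma \ref{lem12} supplies the distributional convergence and Lemma \ref{lem11} the uniform integrability, and the only minor point is to note that the mild atom at $0$ in the limiting law does not affect the integral representation of the expectation since we integrate the survival function on the open half-line $(0,\infty)$.
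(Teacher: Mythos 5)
Your proposal is correct and is exactly the argument the paper has in mind: it states that the corollary "follows from the previous lemma [Lemma~\ref{lem12}] and from the uniform integrability provided by Lemma~\ref{lem11}," and you have simply spelled out that deduction (tail-integral representation, splitting at $M$, bounded convergence on $[0,M]$, and the Markov/second-moment bound for the remainder). Nothing is missing or different in substance.
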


By \eqref{thetan}, the previous limit can be written as
\begin{equation}
\label{feq1}
\lim_{N\to\infty} \frac 1{N^d}\,
\mb E^N_{A_N} [ \tau_{n-1}] \;=\; \frac{1}{2\, \bs\lambda(n)}  \;\cdot
\end{equation}
We turn to the $2$-dimensional case. 

\begin{lemma}
\label{lem13}
Assume that $d=2$ and $n\ge 2$.  Fix a sequence of sets $A_N \in \ms
E_{N}^{n}$ such that $A_N=\{z_N,z_N \pm e_j\}\cup B_N$, where
$B_N\cup\{z_N\}$ belongs to $\mf G_N(n-1,\ell_N)$. Then,
\begin{equation*}
\lim_{N\to\infty} \frac 1{N^2}\,
\mb E^N_{A_N} [ \tau_{n-1}] \;=\; \frac{1}{2\,\bs\lambda(n)}  \;\cdot
\end{equation*}
\end{lemma}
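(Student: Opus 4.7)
The plan is to adapt the argument of Lemma \ref{lem12} to the two-dimensional case. The essential difference is that in $d=2$ the random walk is recurrent, so the escape probability $v_d$ equals $0$; instead, the relevant event is ``escape to a mesoscopic distance'', which by the $2$-dimensional potential kernel happens with probability of order $\pi/(2\log N)$. This factor exactly compensates the extra $\log N$ in the asymptotic $\theta_N \sim (1/\pi)\, N^2 \log N$ from \eqref{thetan}, producing the same limit $1/(2\bs\lambda(n))$ after normalization by $N^2$ as in the higher-dimensional case.

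Fix an auxiliary scale $\tilde\ell_N$ with $N^\alpha \ll \tilde\ell_N \ll \ell_N$ for every $\alpha<1$ (for instance $\tilde\ell_N = \ell_N/\log N$), so that $\tilde\ell_N$ still satisfies the hypotheses of Lemma \ref{lem14}. Denote by $x(t), y(t)$ the positions of the particles initially at $z_N, z_N\pm e_j$, let $D_r = \inf\{t\ge 0 : \Vert x(t)-y(t)\Vert = r\}$, and set $H^\star = D_0 \wedge D_{\tilde\ell_N}$. Since $x(t)-y(t)$ evolves as a $2$-dimensional random walk speeded-up by $2$, standard hitting-time estimates give $\mb E^N_{A_N}[H^\star] = O(\tilde\ell_N^{\,2}) = o(N^2)$, while the $2$-dimensional potential kernel asymptotic $a(x)\sim (2/\pi)\log|x|$ together with $a(\pm e_j)=1$ yields
\begin{equation*}
\mb P^N_{A_N}\bigl[D_{\tilde\ell_N} < D_0\bigr] \;=\; (1+o(1))\,\frac{\pi}{2\log \tilde\ell_N} \;=\; (1+o(1))\,\frac{\pi}{2\log N}\;.
\end{equation*}

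On the event $\{D_{\tilde\ell_N} < D_0\}$ each of the other $n-2$ particles, as well as the two initially adjacent ones, has displaced by at most $o(\ell_N)$ during $[0,H^\star]$, up to events of vanishing probability controlled by maximal inequalities for the individual random walks; hence with conditional probability tending to $1$ the configuration $A_N(H^\star)$ lies in $\mf G_N(n,\tilde\ell_N/2)$. Applying the strong Markov property at $H^\star$ together with Corollary \ref{lem10} for the separation scale $\tilde\ell_N/2$, and invoking the uniform moment bound of Lemma \ref{lem11} to promote convergence in probability to convergence of expectations, we obtain
\begin{equation*}
\mb E^N_{A_N}[\tau_{n-1}] \;=\; o(N^2) \,+\, \mb P^N_{A_N}\bigl[D_{\tilde\ell_N}<D_0\bigr]\, \frac{\theta_N}{\bs\lambda(n)}\,(1+o(1))\;.
\end{equation*}
Dividing by $N^2$ and inserting the asymptotics for $\theta_N$ and for the escape probability above, the right-hand side converges to $1/(2\bs\lambda(n))$, which is the claim.

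The main obstacle is the precise escape probability: in contrast to $d\ge 3$, where only the positivity $v_d>0$ was needed, in $d=2$ the leading-order constant $\pi/(2\log N)$ is essential, since it is what realises the cancellation with the $\log N$ factor in $\theta_N$. A secondary technicality, handled by introducing the auxiliary scale $\tilde\ell_N\ll\ell_N$, is that the typical displacement of a scattered particle during the time $H^\star$ is itself of order $\tilde\ell_N$; a margin is needed to preserve a scattering property at time $H^\star$ that still falls within the range of applicability of Lemma \ref{lem14}.
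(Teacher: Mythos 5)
Your proposal follows essentially the same route as the paper's proof: decompose at the first time the adjacent pair either coalesces or reaches a mesoscopic separation, exploit the two-dimensional escape probability $\sim\pi/(2\log N)$ to cancel the extra $\log N$ in $\theta_N$, check that the resulting configuration is scattered, and conclude via Corollary \ref{lem10} and the uniform moment bound of Lemma \ref{lem11}. The paper carries this out with an auxiliary displacement scale $m_N$ and the $\log N$-weighted estimates \eqref{16}, \eqref{17}, making explicit that the discarded events (excessive displacement, premature coalescence of another pair) have probability $o(1/\log N)$ — a precision your sketch gestures at and does need, since these errors are multiplied by $\theta_N\sim N^2\log N$; otherwise the mechanism is identical.
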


\begin{proof}
Fix a sequence of sets $A_N$ satisfying the hypotheses of the
lemma. Enumerate the points of $A_N=\{x_1, \dots, x_n\}$ in such a
way that $x_1=z_N$, $x_2=z_N\pm e_j$. Denote by $x_i(t)$ the position
at time $t$ of the random walks initially at $x_i$.

Let $(\ell_N :N\ge 1)$, $(m_N : N\ge 1)$ be the sequences $\ell_N =
N/(\log N)^4$, $m_N = N/\log N$. Notice that both sequences fulfill
the conditions above \eqref{13}.  Let $T_{1,2}$ be the first time the
difference $x_1(t) - x_2(t)$ reaches the distance $\ell_N$, $T_{1,2} =
\inf\{t>0 : \Vert x_1(t) - x_2(t)\Vert \ge \ell_N\}$, and denote by
$T_{i}$, $1\le i\le n$, the first time the particle $x_i$ reaches a
distance $m_N$ from its original position: $T_{i} = \inf\{t>0 :
\Vert x_i(t) - x_i(0)\Vert \ge m_N\}$. The proof of the lemma relies
on the estimates \eqref{15}, \eqref{16} and \eqref{17}.

Since the difference $x_1(t) - x_2(t)$ evolves as a random walk
speeded-up by $2$,
\begin{equation*}
\mb E^N_{A_N} \big[ T_{1,2} \big] \;=\; E^N_{e_1} \big[ \bar{D}_{\ell_N}
\big] \;, 
\end{equation*}
where $\bar{D}_{\ell_N}$ is the first time the particle reaches a
distance $\ell_N$ from the origin, and $P^N_{e_1}$ represents the
distribution of a symmetric, nearest-neighbor random walk speeded-up
by $2$, starting from $e_1$. Denote by $B(x,r)$ the ball centered at $x$
of radius $r$. By equation (6.5) in \cite{jlt1} and a simple estimate
of the capacity between $0$ and $B(0,\ell_N)^c$,
$E^N_{e_1}[\bar{D}_{\ell_N}] \le C_0 \ell^2_N$ for some constant $C_0$
independent of $N$. Hence,
\begin{equation}
\label{15}
\lim_{N\to\infty} \frac 1{N^2}\, \mb E^N_{A_N} \big[ T_{1,2} \big]
\;=\;0\;. 
\end{equation}

For every $1\le i\le n$, and every sequence $(S_N)_{N\ge 1}$ of
non-negative numbers, 
\begin{equation*}
\mb P^N_{A_N} [T_i \le S_{N} ] \;=\; 
\widehat{P}^N_{0} [\bar{D}_{m_N} \le S_{N} ]
\;=\;  \widehat{P}^N_{0} 
\big[ \sup_{t\le S_{N}} \Vert x(t) \Vert \ge m_N \big ]\;,
\end{equation*}
where $\widehat{P}^N_{0}$ stands for the distribution of a
nearest-neighbor, symmetric, random walk starting from the origin. The
difference with respect to $P^N_{0}$ is that the random walk is
not speeded-up by $2$ under $\widehat{P}^N_{0}$. An elementary
random walk estimation yields that the right hand side multiplied by
$\log N$ vanishes as $N\to\infty$ if we choose $S_N = N^2/(\log
N)^4$. Hence, wit this definition for $S_N$, for all $1\le i\le n$, 
\begin{equation*}
\lim_{N\to\infty}  (\log N) \, \mb P^N_{A_N} [T_i \le S_N ]
\;=\;0\;.
\end{equation*}
In contrast,
\begin{equation*}
\mb P^N_{A_N} [T_{1,2} \ge S_{N} ] \;=\; 
P^N_{e_1} [\bar{D}_{\ell_N} \ge S_{N} ]\;=\;
P^N_{e_1} \big [\sup_{t\le S_{N}} \Vert x(t) \Vert \le \ell_N \big]\;.
\end{equation*}
Another elementary random walk estimation yields that the right hand
side multiplied by $\log N$ vanishes for the same choice of the
sequence $S_N$. Hence,
\begin{equation*}
\lim_{N\to\infty}  (\log N) \, \mb P^N_{A_N} [T_{1,2} \ge S_N ] 
\;=\;0\;.
\end{equation*}
It follows from the last two estimates that
\begin{equation}
\label{16}
\lim_{N\to\infty}  (\log N) \, \mb P^N_{A_N} \big [T_{1,2} \ge \min_i T_i \big]
\;=\;0\;.
\end{equation}

Denote by $\tau_{i,j}$, $1\le i \not = j\le n$, the first time the
particles $x_i$, $x_j$ meet, $\tau_{i,j} = \inf \{ t>0 : x_i(t) =
x_j(t)\}$.  The arguments used to derive \eqref{16} show that for all
pairs $\{i,j\} \not = \{1,2\}$,
\begin{equation}
\label{17}
\lim_{N\to\infty}  (\log N) \, \mb P^N_{A_N} \big [T_{1,2} \ge \tau_{i,j} \big]
\;=\;0\;.
\end{equation}

We are now in a position to prove the lemma.  By the strong Markov
property,
\begin{align*}
\mb E^N_{A_N} [ \tau_{n-1} ] \; & =\;
\mb E^N_{A_N} \Big[ \, \big[ T_{1,2} + \tau_{n-1} \circ \vartheta_{T_{1,2}} \big] \,
\mf 1\{T_{1,2} < \tau_{n-1} \} \, \Big] 
\; + \;
\mb E^N_{A_N} \Big[ \tau_{n-1} \, \mf 1\{ \tau_{n-1} < T_{1,2} \} \, \Big] \\
& = \;
\mb E^N_{A_N} \Big[ \mb E^N_{A_N(T_{1,2})} \big[ \tau_{n-1} \big]\, 
\mf 1\{T_{1,2} < \tau_{n-1}\} \, \Big] 
\; + \; \mb E^N_{A_N} \Big[ \tau_{n-1} \, \mf 1\{ \tau_{n-1} < T_{1,2} \} \, \Big] \;.
\end{align*}
The second term is bounded by $\mb E^N_{A_N} [ T_{1,2} ]$.  By
\eqref{15}, this expectation divided by $N^2$ vanishes as
$N\to\infty$. On the other hand,
\begin{align*}
& \frac 1{N^2}\, \mb E^N_{A_N} \Big[ \mb E^N_{A_N(T_{1,2})} \big[ \tau_{n-1} \big]\, 
\mf 1\{T_{1,2} \ge \min_i T_i \,,\, T_{1,2} < \tau_{n-1} \} \, \Big]  \\
&\quad \;\le\;
\sup_{A\in \ms E_{N}^{n}} \frac 1{(\log N)\, N^2}\, \mb E^N_{A} \big[ \tau_{n-1} \big]\, 
(\log N) \mb P^N_{A_N} \big[ T_{1,2} \ge \min_i T_i  \big]\;.
\end{align*}
This expression vanishes as $N\to\infty$ because, by Lemma \ref{lem11},
the first term is uniformly bounded and, by \eqref{16}, the second
term tends to $0$.

Up to this point, we proved that
\begin{equation*}
\lim_{N\to\infty} \frac 1{N^2} \mb E^N_{A_N} [ \tau_{n-1} ] \; =\;
\lim_{N\to\infty} \frac 1{N^2}\, \mb E^N_{A_N} \Big[ 
\mb E^N_{A_N(T_{1,2})} \big[ \tau_{n-1} \big]\, 
\mf 1 \big\{T_{1,2} <  \min_i \{\tau_{n-1},T_i\} \big\} \, \Big]\;.
\end{equation*}

On the set $\{T_{1,2} < \min_i T_i \}$, $A_N(T_{1,2})$ belongs to $\mf
G_N(n, \ell_N)$. Hence, by Corollary \ref{lem10} and by
\eqref{thetan},
\begin{equation*}
\frac 1{N^2}\, \mb E^N_{A_N(T_{1,2})} \big[
\tau_{n-1} \big] \;=\; (\log N)\, \frac{\pi^{-1}}{\bs \lambda(n)} \, 
\big[ 1+ o_N(1)\big]\;, 
\end{equation*}
so that
\begin{equation*}
\lim_{N\to\infty} \frac 1{N^2} \mb E^N_{A_N} [ \tau_{n-1} ] \; =\;
\frac{\pi^{-1}}{\bs \lambda(n)} \lim_{N\to\infty} (\log N)
\, \mb P^N_{A_N} \big[ T_{1,2} <  \min_i \{\tau_{n-1},T_i\}  \, \big]\;.
\end{equation*}
By \eqref{16}, in the previous expression we may remove the indicator
of the set $\{T_{1,2} < \min_i T_i\}$. By \eqref{17}, we  may also
exclude the sets $\{\tau _{i,j} \le T_{1,2} \}$ for $\{i,j\} \not =
\{1,2\}$. Hence, the previous expression is equal to
\begin{equation*}
\frac{\pi^{-1}}{\bs \lambda(n)} \lim_{N\to\infty} (\log N)
\, \mb P^N_{A_N} \big[ T_{1,2} <  \tau_{1,2}  \, \big] \;=\; 
\frac{\pi^{-1}}{\bs \lambda(n)} \lim_{N\to\infty} (\log N)
\, P^N_{e_1} \big[ \bar{D}_{\ell_N} < H_0  \, \big]\;,
\end{equation*}
where $H_0$ represents the hitting time of the origin.
By \cite[Lemma 6.10]{jlt1}, the previous expression is equal to $1/[2
\bs \lambda(n)]$, which completes the proof of the lemma.
\end{proof}

Recall the definition of the jump rate $R$ introduced in \eqref{27}.

\begin{lemma}
\label{lem9}
For every $n\ge 2$, 
\begin{equation*}
\lim_{N\to\infty} 
\sum_{A\in \ms E_{N}^{n}} {\pi}_{N}^{n}(A) \, 
\mb E^N_A [ \tau_{n-1}] \,  R(A) \;=\; 1\;.
\end{equation*}
\end{lemma}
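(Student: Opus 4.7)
The plan is to decompose $\ms E_{N}^{n}$ according to the local geometry of $A$ and to isolate the configurations responsible for the asymptotic value of the sum. Since $p(\cdot)$ is supported on the unit vectors, $R(A)>0$ only when $A$ contains at least one nearest-neighbor pair, so I would partition $\ms E_{N}^{n}$ into three pieces: the scattered family $\mf G_N(n,\ell_N)$, on which $R(A)=0$; the ``one-pair'' family
\begin{equation*}
\mc A_N \;:=\; \big\{\, A\in \ms E_{N}^{n}\, :\, A=\{z,z+e_j\}\cup B\text{ with }z\in \bb T^d_N,\, j\in\{1,\dots,d\},\, B\cup\{z\}\in \mf G_N(n-1,\ell_N)\,\big\}\;;
\end{equation*}
and the remainder $\mc B_N := \ms E_{N}^{n}\setminus(\mf G_N(n,\ell_N)\cup\mc A_N)$. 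The contribution of $\mf G_N(n,\ell_N)$ vanishes trivially, and the limit will come entirely from $\mc A_N$.

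On $\mc A_N$ the scattering hypothesis forces every pair of points in $A$ other than $\{z,z+e_j\}$ to lie at distance at least $\ell_N-1$, so for large $N$ only the unit-distance pair contributes to $R$, giving $R(A)=1/d$. Corollary \ref{lem18} in dimension $d\ge 3$ and Lemma \ref{lem13} in dimension $d=2$ give the pointwise asymptotics $N^{-d}\mb E^N_{A_N}[\tau_{n-1}]\to 1/(2\bs\lambda(n))$ along every sequence $A_N\in\mc A_N$, which a standard contradiction argument (using that there are only finitely many choices of $j$) upgrades to
\begin{equation*}
\max_{A\in\mc A_N}\Big|\,\frac{1}{N^d}\,\mb E^N_A[\tau_{n-1}]\,-\,\frac{1}{2\bs\lambda(n)}\,\Big|\;\longrightarrow\;0\;.
\end{equation*}
Parametrizing $\mc A_N$ bijectively by triples $(z,j,B)$ and noting that the scattering condition removes only an $O(\ell_N^d/N^d)$ fraction of admissible $B$'s, one obtains $|\mc A_N|=d\,N^d\binom{N^d}{n-2}(1+o(1))$, hence $\pi_N^n(\mc A_N)=d\,n(n-1)/N^d\cdot(1+o(1))$. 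Multiplying these asymptotics yields a contribution from $\mc A_N$ equal to
\begin{equation*}
\frac{d\,n(n-1)}{N^d}\cdot\frac{1}{d}\cdot \frac{N^d}{n(n-1)}\,(1+o(1))\;=\;1+o(1)\;.
\end{equation*}

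To handle $\mc B_N$, I would use the a priori bounds $R(A)\le n^2$ and $\mb E^N_A[\tau_{n-1}]\le C_0\theta_N$ provided by Lemma \ref{a04b}. Any $A\in\mc B_N$ with $R(A)>0$ contains a nearest-neighbor pair together with at least one additional point within distance $\ell_N$ (forming either a second adjacent pair or a triangular cluster), so an elementary enumeration yields $\pi_N^n(\mc B_N\cap\{R>0\})=O(\ell_N^d/N^{2d})$. The resulting contribution is of order $\ell_N^d\theta_N/N^{2d}$: equal to $O(\ell_N^d/N^d)$ in dimension $d\ge 3$ and to $O(\ell_N^2\log N/N^2)=O((\log N)^{-7})$ in dimension $d=2$, both vanishing.

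The principal delicacies will be the uniformity of convergence on $\mc A_N$---the pointwise statements in Corollary \ref{lem18} and Lemma \ref{lem13} must be promoted to a uniform estimate before one can replace $\mb E^N_A[\tau_{n-1}]$ by its limit inside the $\pi_N^n$-average---and the geometric bookkeeping for $\pi_N^n(\mc B_N\cap\{R>0\})$, where one must simultaneously account for configurations with two disjoint close pairs and for clusters around a single adjacent pair, to ensure that no hidden contribution of larger order is overlooked.
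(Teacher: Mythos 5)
Your proposal follows essentially the same route as the paper: restrict the sum to configurations consisting of a single nearest-neighbor pair with the remaining $n-1$ points scattered, use $R(A)=1/d$ and the asymptotics $\mb E^N_A[\tau_{n-1}]\sim N^d/(2\bs\lambda(n))$ from Corollary \ref{lem18} and Lemma \ref{lem13}, and show the cardinality of that family makes the contribution tend to $1$. The only difference is that you spell out the counting estimate for the remainder $\mc B_N$ and the uniformity upgrade of the pointwise limits, both of which the paper leaves terse but implicit; the substance is the same.
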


\begin{proof}
Since $R(A)=0$ unless $A$ contains two nearest-neighbor points, for
all sets $A$ such that $R(A)>0$, $\mb E^N_A
[ \tau_{n-1}] \le E_{e_1}[H_0]$, where $H_0$ represents the
hitting time of the origin. By \cite[Proposition 10.13]{lpw09}, this
latter expectation is bounded by $C_0 N^d$.

Since $R(A)$ is uniformly bounded, $\mb E^N_A [ \tau_{n-1}] \le
C_0N^d$, and ${\pi}_{N}^{n}(A) = 1/\binom{N^d}{n}$, we may restrict
the sum appearing in the statement of the lemma to sets $A=\{x,x\pm
e_j\}\cup B$, where $B\cup\{x\}\in\mf G_N(n-1,\ell_N)$. The number of
such sets $A$ is $(n-1) d \binom{N^d}{n-1} [1 +o_{N}(1)]$. For them
$R(A) = 1/d$, and, by \eqref{feq1} and Lemma \ref{lem13},
\begin{equation*}
\mb E_{A}^{N}[\tau_{n-1}]\;=\;\frac{N^{d}}{2\bs \lambda(n)}[1 + o_{N}(1)]\;.
\end{equation*}
Hence, the sum alluded to above is equal to
\begin{equation*}
[1 + o_{N}(1)] \, (n-1) \, d \, \frac{\binom{N^d}{n-1}}{\binom{N^d}{n}}
\frac{N^{d}}{2\, \bs \lambda(n)} \, \frac{1}{d}
\;=\;[1 + o_{N}(1)] \, \frac{n(n-1)}{2\, \bs \lambda(n)} \;\cdot
\end{equation*}
The result follows from the definition of $\bs \lambda(n)$ given in
\eqref{14}. 
\end{proof}

\section{Local ergodicity}
\label{sec03}

We prove in this section Proposition \ref{p01}.  It states that we may
replace the time integral of a function $f(A_N(s))$ by the time
integral of a function $F(|A_N(s)|)$. The proof is divided in a
sequence of lemmata.

\begin{lemma}
\label{lem7}
For every $n\ge 2$, there exists a finite constant $C(n)$ such
that
\begin{equation*}
\max_{A\in \ms E_{N}^{n}}  \mb E^N_A \Big[ \int_0^{\infty} R (A_N(s))
\, ds \Big] \;\le\; C(n) \;.
\end{equation*}
\end{lemma}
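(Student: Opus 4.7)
The plan is to exploit the fact that $R(A)$ is precisely the rate at which the cardinality $|A_N(t)|$ decreases, so the integral in the statement can be controlled by the trivial bound $|A_N(t)| \ge 1$. Concretely, I would apply Dynkin's formula to the function $f:E_N\to\bb R$, $f(A)=|A|$.

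First, I compute $(L_N|\cdot|)(A)$ from \eqref{ln}. In the first sum a particle jumps from $x\in A$ to an unoccupied site $y\notin A$, so $|A_{x,y}|=|A|$ and this sum contributes $0$. In the second sum a particle at $x\in A$ jumps to an occupied site $y\in A$, so $|A_x|=|A|-1$, giving a contribution of $-\sum_{x\in A}\sum_{y\in A}p(y-x)$. Since $p(0)=0$, this last double sum equals $R(A)$, defined in \eqref{27}. Thus
\begin{equation*}
(L_N|\cdot|)(A)\;=\;-R(A)\;.
\end{equation*}

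By Dynkin's formula, $M_t:=|A_N(t)|-|A_N(0)|+\int_0^t R(A_N(s))\,ds$ is a martingale under $\mb P^N_A$. Taking expectations and using the pointwise bound $|A_N(t)|\ge 1$ gives, for every $t>0$,
\begin{equation*}
\mb E^N_A\Big[\int_0^t R(A_N(s))\,ds\Big]\;=\;n-\mb E^N_A[|A_N(t)|]\;\le\;n-1\;.
\end{equation*}
Since $R\ge 0$, monotone convergence as $t\to\infty$ then yields
\begin{equation*}
\mb E^N_A\Big[\int_0^\infty R(A_N(s))\,ds\Big]\;\le\;n-1\;,
\end{equation*}
so that $C(n):=n-1$ works. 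There is no real obstacle here: the bound is automatically uniform in $N$ because it rests only on the algebraic identity $L_N|\cdot|=-R$ together with $|A_N(t)|\ge 1$. The estimate can in fact be upgraded to an equality by optional stopping at the singleton-time $\tau_1$ (whose finite expectation follows from Lemma \ref{a04b} by summing over levels), but the weaker martingale-plus-monotone-convergence argument is all that the lemma demands.
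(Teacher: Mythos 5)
Your proof is correct, and it takes a genuinely different and in fact slicker route than the paper's. The paper reduces the infinite integral to the interval $[0,\tau_{n-1}]$, bounds $R$ on that interval by a sum over ordered pairs of the indicator that two particular particles are adjacent, and then estimates for a single pair the expected time a speeded-up random walk spends at $e_1$ before hitting the origin, which is a geometric sum of exponentials. Your argument instead observes the algebraic identity $(L_N|\cdot|)(A)=-R(A)$ (valid because the $A_{x,y}$ moves preserve cardinality, the $A_x$ moves decrease it by one, and $p(0)=0$), applies Dynkin's formula to get that $|A_N(t)|-|A|+\int_0^t R(A_N(s))\,ds$ is a bounded martingale on the finite state space $E_N$, and then uses $|A_N(t)|\ge 1$ together with monotone convergence. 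This gives the explicit constant $C(n)=n-1$ (in fact equality in the limit, since $|A_N(t)|\to 1$ a.s.), and avoids both the reduction to pairs and any random-walk hitting-time estimate. The trade-off is that the paper's pairwise estimate is more local and could transfer to settings where an exact compensator identity is less transparent, whereas your argument is clean precisely because $R$ is, by construction, the jump rate of the particle count.
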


\begin{proof}
Since $R(B)=0$ if $|B|=1$,
\begin{equation*}
\int_0^{\infty} R (A_N(s)) \, ds \;=\;
\int_0^{\tau_{1}} R (A_N(s)) \, ds \;.
\end{equation*}
It is therefore enough to prove that for each $n\ge 2$, there exists a
finite constant $C(n)$ such that
\begin{equation*}
\max_{A\in \ms E_{N}^{n}}  \mb E^N_A \Big[ \int_0^{\tau_{n-1}} R (A_N(s))
\, ds \Big] \;\le\; C(n) \;.
\end{equation*}

Fix $n\ge 2$ and a set $A=\{x_1, \dots, x_n\}$ in $\ms E_{N}^{n}$. Denote by
$x_i(s)$ the position at time $s$ of the particle $x_i$ and by
$\tau_{i,j}$ the collision time of particles $i$ and $j$: $\tau_{i,j} =
\inf\{t>0 : x_i(t) = x_j(t)\}$. As
\begin{equation*}
\int_0^{\tau_{n-1}} R (A_N(s)) \, ds \;\le\;
\sum_{i\not = j} \int_0^{\tau_{i,j}} 
\mf 1 \big\{|x_i(s) - x_j(s)|=1 \big\} \, ds\;,
\end{equation*}
it is enough to estimate
\begin{equation*}
\mb E^N_{\{x_i,x_j\}}  \Big[ \int_0^{\tau_{i,j}} 
\mf 1 \big\{|x_i(s) - x_j(s)|=1 \big\}
\, ds \Big] \;.
\end{equation*}
As the difference evolves as a random walk speeded-up by $2$, it is
enough to bound, for $x\in \bb T^d_N$,
\begin{equation*}
E^N_{x}  \Big[ \int_0^{H_0} \mf 1 \big\{x(s) = e_1 \big\}
\, ds \Big] \;,
\end{equation*}
where $H_0$ stands for the hitting time of the origin. This integral
represents the time spent at $e_1$ before hitting the origin. In
particular, it is bounded by a geometric sum of independent
exponential random variables, which completes the proof of the lemma. 
\end{proof}

\begin{remark}
\label{obsL1}
It follows from last lemma and the strong Markov property at time
$\tau_{n}$ that there exists a finite constant $C(n)$ such that
\begin{equation*}
\max_{A\in E_{N}}\mb E_{A}^{N}\Big[\int_{0}^{\infty}
R\big(A_{N}(s)\big)\, \mf 1\{|A_{N}(s)|\leq n\}\Big]
\;\leq\; C(n)\;.
\end{equation*}
\end{remark}

Recall the definition of the sequence $a_N$ introduced in \eqref{02},
and that $\pi^n_N$ represents the uniform measure in $\ms E^n_N$. 

\begin{lemma}
\label{a05}
For every $n\ge 2$,
\begin{equation*}
\lim_{N\to\infty} \max_{A\in \mf G_N(n,a_N)}  \Big|\, 
\mb E^N_A \Big[ \int_0^{\tau_{n-1}} R(A_N(s))
\, ds \Big] \, -\,
\sum_{B\in \ms E_{N}^{n}} {\pi}_{N}^{n}(B) \, 
\mb E^N_B [ \tau_{n-1}] \,  R(B) \Big| \;=\; 0 \;.
\end{equation*}
\end{lemma}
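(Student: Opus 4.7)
The plan is to observe that the first expectation in the displayed formula is not merely asymptotically close to the second one but actually equals $1$ for every $A\in\ms E^n_N$; the statement then reduces to the convergence already established in Lemma \ref{lem9}.

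The computation rests on a single application of Dynkin's formula with $f(A)=|A|$. Reading off \eqref{ln}, the first sum (jumps to unoccupied sites) gives $|A_{x,y}|-|A|=0$ while the second sum (jumps to occupied sites) gives $|A_x|-|A|=-1$, so, in view of \eqref{27},
\[
(L_N f)(A) \;=\; -\sum_{x\in A}\sum_{y\in A} p(y-x) \;=\; -\, R(A)\;.
\]
Consequently, $M_t := |A_N(t)| - n + \int_0^t R(A_N(s))\, ds$ is a $\mb P^N_A$-martingale for every $A\in\ms E^n_N$.

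I would then apply optional stopping at $\tau_{n-1}\wedge T$: since $|A_N(s)|\in\{n-1,n\}$ on $[0,\tau_{n-1}]$, the integrand satisfies $R(A_N(s)) \le n(n-1)/(2d)$, hence $M_{\cdot\wedge\tau_{n-1}}$ is bounded by a deterministic constant on $[0,T]$ and
\[
\mb E^N_A\bigl[|A_N(T\wedge\tau_{n-1})|\bigr] \;-\; n \;+\;
\mb E^N_A\Bigl[\int_0^{T\wedge\tau_{n-1}} R(A_N(s))\, ds\Bigr] \;=\; 0\;.
\]
By Lemma \ref{lem11} with $m=1$, $\tau_{n-1}$ is a.s.\ finite, so letting $T\to\infty$ via bounded convergence on the first expectation and monotone convergence on the second yields
\[
\mb E^N_A\Bigl[\int_0^{\tau_{n-1}} R(A_N(s))\, ds\Bigr] \;=\; 1
\]
for every $A\in\ms E^n_N$. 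In particular the $\max$ in the statement is vacuous and the first expectation is exactly $1$, independently of $A$ and $N$.

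To conclude, Lemma \ref{lem9} asserts that $\sum_{B\in\ms E^n_N}\pi^n_N(B)\,\mb E^N_B[\tau_{n-1}]\,R(B)\to 1$, so the difference in the lemma tends to $0$ as $N\to\infty$. There is essentially no obstacle beyond checking Dynkin's formula at $\tau_{n-1}$, which is handled by the truncation above together with Lemma \ref{lem11}; the quantitative content of the lemma is already packaged in Lemma \ref{lem9}, which the scattered-set restriction $A\in\mf G_N(n,a_N)$ does not even enter.
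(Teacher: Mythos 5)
Your proof is correct, and it takes a genuinely different and more elementary route than the paper's. The paper establishes the lemma by relaxing the process for a time $\gamma_N$ (large compared to $t^N_{\mathrm{mix}}$, small compared to $\theta_N$), using Lemma~\ref{lf1} to replace the scattered initial condition $A\in\mf G_N(n,a_N)$ by the pseudo-invariant measure $\pi^n_N$, and then invoking Corollary~\ref{lem5b}, which in turn rests on the reversibility identity of Lemma~\ref{lem5}. Your argument instead notices that $|A|-|A_N(0)|+\int_0^t R(A_N(s))\,ds$ is the Dynkin martingale associated with $f(A)=|A|$, and that optional stopping at $\tau_{n-1}$ (justified by truncation at $T$, bounded convergence for the endpoint term and monotone convergence for the compensator) yields the \emph{exact identity} $\mb E^N_A[\int_0^{\tau_{n-1}}R(A_N(s))\,ds]=1$ for every $A\in\ms E^n_N$; equivalently, the integral is the compensator of the (indicator of the unique) coalescence event in $[0,\tau_{n-1}]$. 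The statement then reduces immediately to Lemma~\ref{lem9}, whose proof is independent of the present lemma, so there is no circularity. Your route buys considerable simplicity: no mixing estimate, no replacement of the initial law, no reversibility, and — as you observe — the scattered hypothesis $A\in\mf G_N(n,a_N)$ is not needed at all (the supremum can be taken over all of $\ms E^n_N$). In fact the same identity, fed into Corollary~\ref{lem5b}, would give an alternative proof of Lemma~\ref{lem9} itself. The only place I would tighten the wording is the clause ``$M_{\cdot\land\tau_{n-1}}$ is bounded by a deterministic constant on $[0,T]$'': what is true is that it is bounded by a $T$-dependent constant, which is all that is used; and the appeal to Lemma~\ref{lem11} for $\tau_{n-1}<\infty$ a.s.\ is a heavier tool than necessary (finiteness is immediate for an absorbing finite-state chain), though of course harmless.
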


\begin{proof}
The goal is to replace the initial condition $A$ by the pseudo-invariant
measure ${\pi}_{N}^{n}$ and then to apply Lemma \ref{lem5}. To carry
out this strategy, we remove from the time integral an interval large
enough for the process to relax and small enough not to interfere with
the overall value of the time integral.

Fix a set $A$ in $\mf G_N(n,a_N)$, enumerate its elements, $A = \{x_1,
\dots, x_n\}$, and denote by $x_i(t)$ the position at time $t$ of the
particle initially at $x_i$. Let $D_1$ be the first time two particles
are at distance $1$ from each other: $D_1 = \inf \{t\ge 0 : \Vert
x_i(t) - x_j(t)\Vert =1 \text{ for some } i\not = j\}$. Note that
$R(A_N(s)) =0$ for $s\le D_1$ and that $D_1 \le \tau_{n-1}$.

Let $\gamma_N$ be the sequence introduced in \eqref{09}. We claim that
\begin{equation*}
\lim_{N\to\infty} \max_{A \in \mf G_N(n,a_N)} 
\mb E^N_A \Big[ \mf 1\{ \tau_{n-1} \le \gamma_N\}
\int_0^{\tau_{n-1}} R(A_N(s)) \, ds \Big] \;=\;0 \;.
\end{equation*}
Indeed, as $R(A_N(s)) =0$ for $s < D_1$ and $D_1 \le \tau_{n-1}$, we
may replace the lower limit in the integral by $D_1$ and include in
the indicator the condition $D_1 \le \gamma_N$ to bound the previous
expectation by
\begin{equation}
\label{21}
\mb E^N_A \Big[ \mf 1\{ D_1 \le \gamma_N\}
\int_{D_1}^{\tau_{n-1}} R(A_N(s)) \, ds \Big]\;.
\end{equation}
By the strong Markov property, this expression is bounded by
\begin{equation*}
\mb P^N_A \big[ D_1 \le \gamma_N \big]\,
\max_{B\in \ms E_{N}^{n}}
\mb E^N_B \Big[ \int_{0}^{\tau_{n-1}} R(A_N(s)) \, ds \Big]\;.
\end{equation*}
By Lemma \ref{lem7} the above expectation is bounded, and by equation
(6.18) in \cite{jlt1} the probability vanishes as $N\to\infty$
uniformly in $A \in \mf G_N(n,a_N)$. Note that in dimension $d\ge 3$,
by equation (6.6) in \cite{jlt1}, the result (6.18) holds for any
sequence $l_N$ such that $1\ll l_N \ll N$. This proves the claim.

Denote by $\vartheta_s: D(\bb R_+, E_N) \to D(\bb R_+, E_N)$,
$s\ge 0$, the time translation operators such that
$(\vartheta_s \omega)(t) = \omega(t+s)$ for all $t\ge 0$.  It follows
from the previous assertion that we may introduce the indicator of the
set $\{\gamma_N < \tau_{n-1}\}$ in the expectation appearing in the
statement of the lemma. After the inclusion in the expectation of the
indicator of the set $\{\gamma_N < \tau_{n-1}\}$, in the upper limit
of the integral rewrite $\tau_{n-1}$ as
$\gamma_N + \tau_{n-1} \circ \vartheta_{\gamma_N}$ and apply the
Markov property to get that the expectation is equal to
\begin{equation}
\label{10}
\begin{aligned}
& \mb E^N_A \Big[ \mf 1\{ \gamma_N < \tau_{n-1} \} 
\int_0^{\gamma_N} R (A_N(s)) \, ds \Big] \\
&\quad \;+\; \mb E^N_A \Big[ \mf 1\{ \gamma_N < \tau_{n-1} \} \,
\mb E^N_{A_N(\gamma_N)} \Big[ \int_0^{\tau_{n-1}} R(A_N(s))
\, ds \Big]\, \Big] \;.
\end{aligned}
\end{equation}

We claim that the first term vanishes as $N\to\infty$, uniformly in
$A\in \mf G_N(n,a_N)$. Recall the definition of the hitting time
$D_1$. If $\gamma_N \le D_1$, the expression inside the expectation
vanishes because $R(A_N(s))=0$ for $s\le D_1$. We may therefore assume
that $D_1\le \gamma_N$. We may also replace the lower limit of the
integral by $D_1$ and the upper limit by $\tau_{n-1}$ to find out that
the first term in \eqref{10} is bounded by \eqref{21}. Since the
expectation in \eqref{21} vanishes as $N\to\infty$, uniformly in $A\in
\mf G_N(n,a_N)$, the claim is proved.

It remains to examine the second expectation in \eqref{10}. To apply
Lemma \ref{lf1}, let $F: \ms E^{\le n}_N \to \bb R$ be the function
given by
\begin{equation}
\label{feq2}
F(B) \;=\; \mb E^N_{B} \Big[ \int_0^{\tau_{n-1}} R(A_N(s))
\, ds \Big] \;,\quad B\in\ms E^n_N \;,
\end{equation}
$F(B) =0$ for $B\not\in \ms E^n_N$.  By Lemma \ref{lem7}, $F$ is
uniformly bounded, $\Vert F \Vert \le C(n)$, and therefore fulfills
the condition of Lemma \ref{lf1}. Hence, by this result, the second
term in \eqref{10} can be written as
\begin{equation*}
\mb E^N_{{\pi}_{N}^{n}} \Big[ \int_0^{\tau_{n-1}} R(A_N(s))
\, ds \Big] \;+\; R_{N}\;,
\end{equation*}
where the absolute value of the remainder $R_{N}$ is bounded by
\begin{equation*}
C(n) \, \Big\{ \, \mb P_{A}^{N} [\tau_{n-1} \le \gamma_{N}]  
\;+\; 2\, \|  p^{(n)}_{\gamma_N}({\bf a}, \cdot) - \pi^{\otimes
  n}_N(\cdot) \|_{\rm TV} \;+\; c_N\Big\}\;.
\end{equation*} 
In this formula, ${\bf a} = (a_1, \dots, a_n)$, $a_j$ are the elements
of $A$ and $c_N$ a constant which vanishes as $N\to\infty$. Since
$\gamma_N \gg t^{N}_{\rm mix}$, the second term inside braces vanishes
as $N\to\infty$, uniformly in $A\in \ms E_{N}^{n}$. By Lemma
\ref{lem8}, the first term inside braces vanishes as $N\to\infty$,
uniformly in $A\in \mf G_N(n,a_N)$. To complete the proof of the
lemma, it remains to apply Corollary \ref{lem5b}.
\end{proof}

\begin{lemma}
\label{a02}
Let $F: \bb N \to \bb R$ be a function which eventually vanishes:
there exists $k_0\ge 0$ such that $F(k)=0$ for all $k> k_0$.
For all $t>0$, $n>1$, 
\begin{equation*}
\lim_{N\to\infty} \max_{A\in \mf G_N(n, a_N)}  \Big|\, \mb E^N_{A}
\Big[ \int_{0}^{t \theta_N}  \big\{ R(A_N(s))
- \theta^{-1}_N \, \mf n_s \big\} \,  F(|A_N(s)|) \, ds  \, \Big]\,
\Big| \;=\; 0 \;.
\end{equation*}
\end{lemma}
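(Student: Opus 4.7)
The plan is to reduce the truncated integral $\int_{0}^{t\theta_{N}}$ to the natural full-path integral $\int_{0}^{\tau_{1}}$ and then analyze the latter level by level using the stopping times $\tau_{j}$ from \eqref{23}. Since $R(A_{N}(s))$ and $\bs\lambda(|A_{N}(s)|)$ both vanish when $|A_{N}(s)|=1$, the integrand is zero for $s\ge \tau_{1}$, and one may write
\begin{equation*}
\int_{0}^{t\theta_{N}}\big\{R-\theta_{N}^{-1}\mf n_{s}\big\}\,F(|A_{N}(s)|)\, ds \;=\; \int_{0}^{\tau_{1}}\big\{\cdots\big\}\, ds \;-\; \mf 1\{\tau_{1}>t\theta_{N}\}\int_{t\theta_{N}}^{\tau_{1}}\big\{\cdots\big\}\, ds\,.
\end{equation*}
The task thus splits into controlling the two pieces on the right-hand side.

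For the first piece, decompose by levels as $\sum_{k=2}^{\min(n,k_{0})} F(k)\int_{\tau_{k}}^{\tau_{k-1}}\{R-\theta_{N}^{-1}\bs\lambda(k)\}\,ds$, with the convention $\tau_{n}=0$; only finitely many terms appear since $F$ vanishes beyond $k_{0}$. Apply the strong Markov property at each $\tau_{k}$ and iterate Proposition \ref{lem3}: with probability tending to $1$, $A_{N}(\tau_{k})\in\mf G_{N}(k,a_{N})$ for every $2\le k<n$. Starting from any $A'\in\mf G_{N}(k,a_{N})$, Lemma \ref{a05} combined with Lemma \ref{lem9} shows $\mb E^{N}_{A'}[\int_{0}^{\tau_{k-1}}R(A_{N}(s))\,ds]\to 1$, while Corollary \ref{lem10} gives $\theta_{N}^{-1}\bs\lambda(k)\,\mb E^{N}_{A'}[\tau_{k-1}]\to 1$; subtraction produces the level-$k$ cancellation. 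The uniform $L^{1}$ bounds of Remark \ref{obsL1} and iterated Lemma \ref{a04b} absorb the vanishing-probability event where some $A_{N}(\tau_{k})$ fails to be scattered, so the expectation of the first piece is $o(1)$ uniformly in $A\in\mf G_{N}(n,a_{N})$.

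For the cutoff piece, apply the Markov property at the deterministic time $t\theta_{N}$ to rewrite its expectation as $\mb E^{N}_{A}[\mf 1\{\tau_{1}>t\theta_{N}\}\, h_{N}(A_{N}(t\theta_{N}))]$, where
\begin{equation*}
h_{N}(A') \;:=\; \mb E^{N}_{A'}\Big[\int_{0}^{\tau_{1}}\big\{R(A_{N}(s))-\theta_{N}^{-1}\bs\lambda(|A_{N}(s)|)\big\}\,F(|A_{N}(s)|)\,ds\Big]\,.
\end{equation*}
The first step shows $|h_{N}(A')|\to 0$ uniformly for $A'\in\ms E_{N}^{1}\cup\bigcup_{k=2}^{n}\mf G_{N}(k,a_{N})$ (trivially when $|A'|=1$, since there $\tau_{1}=0$), and Lemma \ref{lem16} ensures $A_{N}(t\theta_{N})$ lies in this set with probability tending to $1$. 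A global bound $|h_{N}|\le C$, provided by Remark \ref{obsL1} and iterated Lemma \ref{a04b}, controls the small complementary event, so the expectation vanishes. The main obstacle throughout is precisely this cutoff: because $t\theta_{N}$ is of the same order as the level holding times, the mass on $\{\tau_{1}>t\theta_{N}\}$ cannot be made small by a crude Markov-inequality tail bound; the crucial input that unlocks the argument is that $A_{N}(t\theta_{N})$ is itself scattered with high probability, allowing one to restart the full-path analysis from a good state.
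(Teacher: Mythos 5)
Your argument is correct and follows essentially the same route as the paper: both split into a ``full path'' contribution and a cutoff correction, treat the former level by level via the strong Markov property at each $\tau_k$ together with Proposition~\ref{lem3}, Lemma~\ref{a05}, Lemma~\ref{lem9} and Corollary~\ref{lem10}, and handle the cutoff by the Markov property at $t\theta_N$ with Lemma~\ref{lem16} and the uniform bounds of Remark~\ref{obsL1} and Lemma~\ref{a04b}. The only difference is cosmetic: you write the decomposition as $\int_0^{\tau_1}-\mathbf 1\{\tau_1>t\theta_N\}\int_{t\theta_N}^{\tau_1}$ and package the second piece through the auxiliary function $h_N$, while the paper partitions the indicator over the events $\{\tau_j< t\theta_N\wedge\tau_1\le\tau_{j-1}\}$ as in \eqref{08}.
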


\begin{proof}
Fix $n\ge 2$ and $A$ in $\mf G_N(n, a_N)$.  Since $R(A')$,
$\bs \lambda(|A'|)$ vanish for $|A'|=1$, if $k_0\le 1$ there is
nothing to prove. Assume, therefore, that $k_0\ge 2$.  Since $F(k)=0$
for $k> k_0$, we may start the integral from $\tau_{n_0}$, where
$n_0 = n \wedge k_0$.  If $t \theta_N \le \tau_{n_0}$, the integral
vanishes. We may therefore insert inside the expectation the indicator
function of the set $\{t \theta_N > \tau_{n_0}\}$, which can be written
as the disjoint union of the sets $\{\tau_j < t \theta_N \wedge \tau_1
\le \tau_{j-1} \}$, $2\le j\le n_0$. Hence, the time-integral
appearing in the statement of the lemma can be written as
\begin{equation}
\label{08}
\begin{aligned}
& \sum_{j=2}^{n_0} \mf 1\{\tau_j < t \theta_N \wedge \tau_1 \le \tau_{j-1} \}
\int_{\tau_{n_0}}^{\tau_{j-1}}  \widehat{R}(A_N(s))
\,  F(|A_N(s)|) \, ds \\
&\quad - \; \sum_{j=2}^{n_0} \mf 1\{\tau_j < t \theta_N \wedge
\tau_1 \le \tau_{j-1} \}
\int_{t \theta_N}^{\tau_{j-1}}  \widehat{R}(A_N(s))
\,  F(|A_N(s)|) \, ds \; , 
\end{aligned}
\end{equation}
where $\widehat{R}(A) = R(A) - \theta^{-1}_N \, \bs \lambda(|A|)$. 

We consider each term separately. Write the integral appearing in the
first line as a sum of integrals on the intervals $[\tau_i,
\tau_{i-1})$ and sum by parts to obtain that the first expression is
equal to
\begin{equation*}
\sum_{i=2}^{n_0} \mf 1\{\tau_i < t \theta_N \wedge
\tau_1 \le \tau_{1} \} \,  F(i) \,
\int_{\tau_i}^{\tau_{i-1}}  \widehat{R}(A_N(s))\, ds  \;,
\end{equation*}
where we used the fact that $F$ is constant in the time interval
$[\tau_i, \tau_{i-1})$.  Remove from the indicator the condition $\{t
\theta_N \wedge \tau_1 \le \tau_{1}\}$, which is always satisfied, and
replace $\{\tau_i < t \theta_N \wedge \tau_1\}$ by $\{\tau_i < t
\theta_N\}$. Fix $2\le i\le n$, disregard the constant $F(i)$, and
consider the expectation with respect to $\mb P^N_A$:
\begin{equation}
\label{19}
\mb E^N_A \Big[ \mf 1\{\tau_i < t \theta_N \} \,
\int_{\tau_i}^{\tau_{i-1}}  \widehat{R}(A_N(s))\, ds  \Big] \;.
\end{equation}

We claim that
\begin{equation*}
\lim_{N\to\infty} \mb E^N_A \Big[ \mf 1\{ A_N(\tau_i) \not \in \mf G_N (i,a_N)\} \,
\int_{\tau_i}^{\tau_{i-1}}  \big\{ R(A_N(s)) - 
\theta^{-1}_N \mf n_s\big\}\, ds  \Big] \;=\; 0\;.
\end{equation*}
Indeed, by the strong Markov property, the absolute value of the previous
expectation is less than or equal to
\begin{equation*}
\mb P^N_A \big[ A_N(\tau_i) \not \in \mf G_N (i,a_N)\, \big] \,
\max_{B\in \ms E_{N}^{i}} \Big\{
\mb E^N_B \Big[ \int_{0}^{\tau_{i-1}}  R(A_N(s))\, ds  \Big] 
\,+\, \bs \lambda(i)\,\theta^{-1}_N  \mb E^N_B \big[ \tau_{i-1} \big]  \Big\}\;.
\end{equation*}
By Lemmata \ref{lem11} and \ref{lem7}, the maximum is bounded.  On the
other hand, since $A$ belongs to $\mf G_N(n,a_N)$, by Proposition
\ref{lem3}, the probability vanishes as $N\to\infty$, which proves the
claim.

We may therefore insert in \eqref{19} the indicator of the set $\{
A_N(\tau_i) \in \mf G_N (i,a_N)\}$.  By the strong Markov property,
this expectation is equal to
\begin{equation*}
\mb E^N_A \Big[ \mf 1\{\tau_i < t \theta_N \,,\,
A_N(\tau_i) \in \mf G_N (i,a_N) \} \, \mb E^N_{A_N(\tau_i)} \Big[
\int_{0}^{\tau_{i-1}}  \widehat{R}(A_N(s))\, ds \Big]\,  \Big] \;.
\end{equation*}
By Lemmata \ref{a05} and \ref{lem9}, 
\begin{equation*}
\lim_{N\to\infty} \mb E^N_{B} \Big[ \int_{0}^{\tau_{i-1}}  R(A_N(s))\,
ds \Big] \;=\; 1
\end{equation*}
uniformly for $B\in \mf G_N (i,a_N)$. By Corollary \ref{lem10}, as
$N\to\infty$, $\bs \lambda(i)\, \mb E^N_{B} [ \tau_{i-1}/\theta_N]$
converges to $1$ uniformly for $B\in \mf G_N (i,a_N)$.

It remains to examine the second expression in \eqref{08}. The
argument is similar to the one presented above.  Fix $2\le j\le n_0$
and take the expectation with respect to $\mb P^N_A$ for
$A\in \mf G_N (n,a_N)$. Since $\tau_1 \ge \tau_j$, we may remove
$\tau_1$ from the indicator. For $j=2$ the set becomes
$\{\tau_2 < t \theta_N\}$, while for $2<j \le n_0$ it is given by
$\{ \tau_j < t \theta_N \le \tau_{j-1}\}$. In the first case, to
uniform the notation, we insert the condition
$t \theta_N \le \tau_{1}$. This is possible because the integral
vanishes if this bound is not fulfilled.

We claim that
\begin{equation*}
\lim_{N\to\infty} \mb E^N_A \Big[ \mf 1\{ \ms G_N \} \,
\int_{t\theta_N}^{\tau_{j-1}}  \big\{ R(A_N(s)) 
- \theta_N^{-1}\mf n_s\big\}\, ds  \Big] \;=\; 0\;,
\end{equation*}
where $\ms G_N$ is the set $\{ \tau_j < t \theta_N \le \tau_{j-1}
\,,\, A_N(t\theta_N) \not \in \mf G_N (j,a_N)\}$. The proof of this
claim is identical to the one produced below \eqref{19}. Observe that
on the set $\{ \tau_{j-1} \ge t \theta_N\}$ we may write $\tau_{j-1}$
as $t\theta_N + \tau_{j-1} \circ \vartheta_{t\theta_N}$. Apply the
Markov property at time $t\theta_N$, estimate the conditional
expectation by the supremum over all sets in $\ms E_{N}^{j}$, and apply
Lemmata \ref{lem11} and \ref{lem7}, and Lemma \ref{lem16} (instead
of Proposition \ref{lem3}).

After inserting in the expectation the indicator of the set
$\{A_N(t\theta_N) \in \mf G_N (j,a_N)\}$, applying the
Markov property at time $t\theta_N$, the expectation becomes
\begin{equation*}
\mb E^N_A \Big[ \mf 1\{\ms M_N \} \, \mb E^N_{A_N(t\theta_N)} \Big[
\int_{0}^{\tau_{j-1}}  \widehat{R}(A_N(s))\, ds \Big]\,  \Big] \;,
\end{equation*}
where $\ms M_N = \{ \tau_j < t \theta_N \le \tau_{j-1} \,,\,
A_N(t\theta_N) \in \mf G_N (j,a_N)\}$. By the first part of the proof,
this expression vanishes as $N\to\infty$.
\end{proof}

\begin{proof}[Proof of Proposition \ref{p01}]
Fix $\varepsilon>0$. In view of Proposition \ref{p02}, choose
$M\in\mathbb{N}$ such that
$\mb P^{N}[\, |A_{N}(t_{0}\theta_{N})|> M]\leq\varepsilon$.  Let
$W(A) =\{ \theta_{N}\,R(A) - \bs \lambda(|A|) \} \, F(|A|)$. There
exists a finite constant $C(F,B,t)$ such that
\begin{equation}
\label{lf4}
\Big|\,\mb E^{N}\Big[B^{N}\mf 1\{|A_{N}(t_{0}\theta_{N})|> M\}
\int_{t_{0}}^{t}W\big(A_{N}(s\theta_{N})\big)\, ds\Big]\,\Big|
\,\leq\,C(F,B,t)\,\varepsilon\;.
\end{equation}
To prove this assertion, apply the Markov property to write the
expectation appearing in the left-hand side as
\begin{equation*}
\mb E^{N}\Big[B^{N}\mf 1\{|A_{N}(t_{0}\theta_{N})|> M\}\,
\mb E_{A(t_{0}\theta_{N})}^{N}\Big[
\int_{0}^{t-t_{0}}W\big(A_{N}(s\theta_{N})\big)\, ds\Big]\, \Big]\;.
\end{equation*}

We claim that the absolute value of the expectation with respect to
$\mb P_{A(t_{0}\theta_{N})}^{N}$ is bounded by a constant depending on
$F$ and $t$. On the one hand, the function
$ \bs \lambda(|A|) \, F(|A|)$ is bounded because $F(k)=0$ for all $k$
large enough. On the other hand, since $F$ vanishes outside a finite
subset of $\bb N$, by Remark \ref{obsL1}, the expectation of the time
integral of
$\theta_{N}\,R(A_{N}(s\theta_{N}))\, F(A_{N}(s\theta_{N}))$ is
bounded. This proves the claim.

It follows from this claim that the absolute value of the expectation
appearing in the last displayed equation is bounded by
\begin{equation*}
C(F,B,t) \, \mb P^{N} \big[\, |A_{N}(t_{0}\theta_{N})|> M\, \big]\;,
\end{equation*}
Assertion \eqref{lf4} follows from the choice of $M$.

A similar argument, using Corollary \ref{obsaN} instead of Proposition
\ref{p02}, proves that for all $N$ sufficiently large
\begin{multline*}
\Big|\,\mb E^{N}\Big[B^{N}\mf 1\Big\{A_{N}(t_{0}\theta_{N})
\not\in\ms E_{N}^{1} \, \cup \,
\bigcup_{k=2}^{N^{d}} \mf G_N(k, a_N)\Big\}\\
\times\,\int_{t_{0}}^{t}W\big(A_{N}(s\theta_{N})\big)\, ds\Big]\,\Big|
\,\leq\,C(F,B,t)\,\varepsilon\;.
\end{multline*}

It follows from the previous two estimates that we may restrict our
attention to the expectation
\begin{equation*}
\mb E^{N}\Big[B^{N}\mf 1\big\{ \ms M_N(M,t_{0}) \big\}
\int_{t_{0}}^{t}W\big(A_{N}(s\theta_{N})\big)\, ds\Big]\,,
\end{equation*}
where
$\ms M_N(M,t_{0})=\big\{|A_{N}(t_{0}\theta_{N})|\leq
M\,,\,A_{N}(t_{0}\theta_{N})\in \ms E_{N}^{1} \, \cup
\,\bigcup_{k=2}^M \mf G_N(k, a_N)\big\}$.
Applying the Markov property at time $t_{0}\theta_{N}$ yields that the
absolute value of the previous expectation is bounded by
\begin{equation*}
C(B)\max_{A\in \ms E_{N}^{1} \, \cup \,\bigcup_{k=2}^M \mf G_N(k, a_N)}\Big|\mb E_{A}^{N}
\Big[\int_{0}^{t-t_{0}}W\big(A_{N}(s\theta_{N})\big)\, ds\Big]\Big|\;,
\end{equation*}
where the constant $C(B)$ is an upper bound for
$(|B^{N}|:N\in\mathbb{N})$. This expression vanishes as $N\to\infty$
by Lemma \ref{a02}, which completes the proof of the proposition.
\end{proof}

\subsection{Equilibrium expectation of hitting times}
We conclude this section with a result on the equilibrium expectation
of hitting times.  Let $X_t$ be a reversible, irreducible,
continuous-time Markov chain on a finite set $E$. Denote by $\pi$ the
unique stationary state and by $H_B$, $B\subset E$, the hitting time
of the set $B$: $H_B = \inf\{t\ge 0 : X_t\in B\}$. Denote by $\bb P_x$
the distribution of the Markov chain $X_t$ starting from
$x$. Expectation with respect to $\bb P_x$ is represented by $\bb
E_x$. As usual, for a probability measure $\mu$ on $E$, $\bb P_\mu =
\sum_{x\in E} \mu(x)\, \bb P_x$.

\begin{lemma}
\label{lem5}
For all subsets $B$ of $E$, and all functions $f:E \to \bb R$, 
\begin{equation*}
\bb E_\pi \Big[ \int_0^{H_B} f(X_s) \, ds \Big] \;=\;
\sum_{x\in E} \pi(x)\, f(x)\, \bb E_x [ H_B ]\;.
\end{equation*}
\end{lemma}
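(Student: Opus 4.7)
The plan is to identify both sides of the claimed identity as natural linear functionals of the two Poisson problems
$Lg = -f$ on $E\setminus B$ with $g|_B=0$, and $Lh = -1$ on $E\setminus B$ with $h|_B=0$,
and then to conclude using the self-adjointness of $L$ in $L^{2}(\pi)$ that comes from reversibility.

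First I would introduce
$g(x) := \bb E_x\bigl[\int_0^{H_B} f(X_s)\,ds\bigr]$ and $h(x) := \bb E_x[H_B]$.
Both are finite since $E$ is finite and $X_t$ is irreducible. A standard Dynkin / optional-stopping argument shows that $g$ solves $Lg=-f$ on $E\setminus B$ with $g\equiv 0$ on $B$, and likewise $Lh=-1$ on $E\setminus B$ with $h\equiv 0$ on $B$. By definition the left-hand side of the lemma equals $\sum_x \pi(x)\, g(x)$, while the right-hand side equals $\sum_x \pi(x)\, f(x)\, h(x)$. So it suffices to prove
\begin{equation*}
\sum_x \pi(x)\, g(x) \;=\; \sum_x \pi(x)\, f(x)\, h(x).
\end{equation*}

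The crucial input is reversibility, $\pi(x) L(x,y) = \pi(y) L(y,x)$, which immediately gives the symmetry
$\sum_x \pi(x)\, u(x)\, (Lv)(x) \,=\, \sum_x \pi(x)\, v(x)\, (Lu)(x)$
for all $u,v:E\to\bb R$. Apply this with $u=g$, $v=h$. On the left, $g(x)=0$ for $x\in B$, while $(Lh)(x)=-1$ for $x\notin B$, so the sum collapses to $-\sum_x \pi(x)\, g(x)$. On the right, $h(x)=0$ for $x\in B$, while $(Lg)(x)=-f(x)$ for $x\notin B$, so the sum collapses to $-\sum_x \pi(x)\, f(x)\, h(x)$. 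Equating the two and canceling the sign closes the argument.

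The only delicate step is the bookkeeping with the boundary $B$: on $B$ the equations $Lh=-1$ and $Lg=-f$ need not hold, so one must check that those sites do not contribute to the symmetric form. This is automatic because the corresponding factors $g(x)$ and $h(x)$ both vanish on $B$. An alternative, fully probabilistic route would be to establish the time-reversal symmetry $\pi(x)\, G_B(x,y) = \pi(y)\, G_B(y,x)$ for the stopped Green function $G_B(x,y)=\bb E_x[\int_0^{H_B} \mf 1\{X_s=y\}\,ds]$ and then sum over $x$ using Fubini; but either way reversibility is the essential ingredient and the argument is short.
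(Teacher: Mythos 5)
Your proof is correct, and it takes a genuinely different route from the paper's. The paper argues at the level of sample paths: it writes $\int_0^{H_B} f(X_s)\,ds$ in terms of the skeleton chain $Y_k$ and the holding times $1/\lambda(Y_k)$, expands $\bb E_\pi\big[\sum_{k=0}^{h_B-1} f(Y_k)/\lambda(Y_k)\big]$ as a triple sum over $k$, $x$, $y$, and then flips each summand $\pi(x)\lambda(x)\,\bb P_x[Y_k=y,\,h_B>k]$ into $\pi(y)\lambda(y)\,\bb P_y[Y_k=x,\,h_B>k]$ by reversibility of the killed skeleton. You instead work at the level of the generator: you encode both sides as Poisson problems $Lg=-f$, $Lh=-1$ with Dirichlet data on $B$, and then hit them with the self-adjointness of $L$ in $L^2(\pi)$, with the crucial (and correctly handled) observation that the boundary sites contribute nothing because $g$ and $h$ both vanish on $B$. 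Your version is a bit slicker since there is no decomposition into skeleton plus holding times; the price is that you need the identification $g(x)=\bb E_x[\int_0^{H_B}f(X_s)\,ds]$ as the solution of the Poisson problem, which you correctly flag as the standard Dynkin/first-step computation. The Green-function symmetry $\pi(x)G_B(x,y)=\pi(y)G_B(y,x)$ you mention in passing is essentially the integrated form of the paper's path-reversal step, so it sits somewhere between the two arguments. Both proofs rest on reversibility as the essential input.
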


\begin{proof}
Denote by $(Y_k)_{k\ge 0}$ the skeleton of the chain $X_t$. This is
the discrete-time Markov chain which keeps track of the sequence of
elements of $E$ visited by the process. Denote by $\lambda(x)$, $x\in
E$, the holding time at $x$. Representing the process $X_t$ in terms
of the chain $Y_k$ and independent, mean-one, exponential random
variables (cf. Section 6 of \cite{bl2}), the expectation appearing in
the statement of the lemma can be written as
\begin{equation*}
\bb E_\pi \Big[ \sum_{k=0}^{h_B-1}  \frac{f(Y_k)}{\lambda(Y_k)}  \Big] 
\;=\; \sum_{k\ge 0} \sum_{x\not\in B} \sum_{y \not\in B} \pi(x)\, 
\frac{f(y)}{\lambda(y)} \, \bb P_x \big[Y_k=y \,,\, h_B>k \big]\;,
\end{equation*}
where $h_B$ stands for the hitting time of the set $B$ by the Markov
chain $Y_k$: $h_B = \min\{j\ge 0 : Y_j\in B\}$.  By reversibility, the
previous expression is equal to
\begin{equation*}
\sum_{k\ge 0} \sum_{x\not\in B} \sum_{y \not\in B}  \pi(y)\, 
\frac{f(y)}{\lambda(x)} \, \bb P_y \big[Y_k=x \,,\, h_B>k \big]
\;=\; \sum_{y \in E} \pi(y) \, f(y) \,
\bb E_y \Big[ \sum_{k=0}^{h_B-1}  \frac{1}{\lambda(Y_k)}  \Big] \;. 
\end{equation*}
The last expectation is equal to $\bb E_y [ H_B]$, which completes the
proof of the lemma.
\end{proof}

\begin{corollary}
\label{lem5b}
For every $n\ge 2$,
\begin{equation*}
\lim_{N\to\infty} \Big|\, 
\mb E^N_{\pi^n_N} \Big[ \int_0^{\tau_{n-1}} R(A_N(s))
\, ds \Big] \, -\,
\sum_{B\in \ms E_{N}^{n}} {\pi}_{N}^{n}(B) \, 
\mb E^N_B [ \tau_{n-1}] \,  R(B) \, \Big| \;=\; 0 \;.
\end{equation*}
\end{corollary}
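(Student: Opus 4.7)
The plan is to apply Lemma~\ref{lem5}, whose conclusion is essentially the identity being asked for. The obstacle is that its hypothesis (a reversible, irreducible chain with an explicit stationary state) is not satisfied by $(A_N(t))$ on $[0,\tau_{n-1})$: this process exits $\ms E_N^n$ for good at $\tau_{n-1}$ and is hence not an irreducible chain on $\ms E_N^n$. To bridge this gap, I will build a small auxiliary reversible chain on the enlarged state space $\ms F := \ms E_N^n \cup \{*\}$, where $*$ is a single cemetery state that lumps together all configurations in $\ms E_N^{n-1}$ reachable by one coalescence from $\ms E_N^n$.

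On $\ms E_N^n$ the auxiliary chain $(Z_t)$ is defined to have exactly the same rates as $A_N$: exclusion-type moves $q(A,A_{x,y}) = p(y-x)$ for $x\in A$, $y\notin A$, together with an aggregated coalescence rate $q(A,*) := R(A)$. From $*$ I set $q(*,A) := R(A)$. This is the choice forced by detailed balance with respect to the uniform measure $\pi$ on $\ms F$: the symmetry of $p(\cdot)$ and uniformity of $\pi$ give detailed balance for the exclusion moves, while $\pi(A) = \pi(*)$ reduces the balance condition between $\ms E_N^n$ and $*$ to the trivial $R(A) = R(A)$. Irreducibility is easy: the exclusion dynamics is irreducible on $\ms E_N^n$, and $*$ communicates with any $A$ such that $R(A)>0$, which is itself reachable from every starting configuration.

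By construction, started from $A\in\ms E_N^n$, the chain $(Z_t)$ has the same joint distribution as $(A_N(t))$ up to the first coalescence event, so $H_{\{*\}}$ has the same law as $\tau_{n-1}$ under $\mb P^N_A$, and $R(Z_s) = R(A_N(s))$ on $[0,H_{\{*\}})$. Applying Lemma~\ref{lem5} with $X = Z$, $B = \{*\}$ and $f$ defined by $f(A) = R(A)$ on $\ms E_N^n$ and $f(*) = 0$, and noting that the $A = *$ terms vanish on both sides (since $H_{\{*\}}=0$ under $\bb P_*$ and $R(*)=0$), the common factor $1/|\ms F|$ coming from the uniform $\pi$ cancels and one obtains the \emph{exact} identity
\begin{equation*}
\mb E^N_{\pi^n_N}\Big[\int_0^{\tau_{n-1}} R(A_N(s))\, ds\Big] \;=\; \sum_{B\in\ms E_N^n}\pi^n_N(B)\, R(B)\, \mb E^N_B[\tau_{n-1}]\;,
\end{equation*}
which is stronger than the stated vanishing in the limit.

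The main subtlety is the construction of the auxiliary chain: one must select return rates from $*$ that simultaneously preserve reversibility with respect to a tractable invariant measure and respect the identification $H_{\{*\}}\stackrel{d}{=}\tau_{n-1}$. Once the setup $q(*,A)=R(A)$ is in place (essentially unique once the uniform $\pi$ on $\ms F$ is chosen), everything reduces to a direct application of Lemma~\ref{lem5}.
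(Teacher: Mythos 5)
Your proposal is correct and it takes a genuinely different route from the paper. The paper handles the non-reversibility of $(A_N(t))$ on $\ms E_N^n$ by passing to the auxiliary process of $n$ independent random walks on $(\bb T^d_N)^n$, which is reversible with respect to the product measure $\pi^{\otimes n}_N$; it applies Lemma~\ref{lem5} there and then translates back from $\pi^{\otimes n}_N$ to $\pi^n_N$ using the total-variation estimate \eqref{feq3}, picking up an additive $c_N\to 0$ error and a multiplicative $1+O(N^{-d})$ error (which is why the statement is phrased as a limit). Your construction instead modifies the state space rather than the measure: you lump the entire coalescence boundary $\ms E_N^{n-1}$ into a single cemetery state $*$ and choose the return rates $q(*,A)=R(A)$ so that the uniform measure on $\ms E_N^n\cup\{*\}$ satisfies detailed balance (the exclusion moves are already reversible by symmetry of $p$, and the coalescence rate $R(A)$ is automatically balanced when $\pi$ is uniform). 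Since the auxiliary chain restricted to $[0,H_{\{*\}})$ is distributed exactly as $(A_N(t))_{t<\tau_{n-1}}$ under $\mb P^N_A$, Lemma~\ref{lem5} then yields the identity
\begin{equation*}
\mb E^N_{\pi^n_N}\Big[\int_0^{\tau_{n-1}} R(A_N(s))\,ds\Big]\;=\;\sum_{B\in\ms E_N^n}\pi^n_N(B)\,R(B)\,\mb E^N_B[\tau_{n-1}]
\end{equation*}
as an exact equality, not merely in the limit. This is cleaner and stronger than the paper's proof; the paper's detour through independent walks is the natural choice in context only because the coupling $\widehat{\mb P}^N_A$ and the measure $\pi^{\otimes n}_N$ are already set up for Lemma~\ref{lf1} and Lemma~\ref{a05}, but for this particular corollary your cemetery-state construction is more economical. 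I verified the key points: $H_{\{*\}}$ has the same law as $\tau_{n-1}$ because the coalescence rate out of each $A\in\ms E_N^n$ equals $R(A)$; the auxiliary chain is irreducible; and the $*$-terms on both sides of Lemma~\ref{lem5} vanish ($H_{\{*\}}=0$ under $\bb P_*$ and $f(*)=0$), so the $1/(|\ms E_N^n|+1)$ prefactors cancel and the normalization by $\pi^n_N(\cdot)=1/|\ms E_N^n|$ is recovered exactly.
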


\begin{proof}
Let $F: \ms E^{\le n}_N \to \bb R$ be the function given by
\eqref{feq2}, and recall that it is uniformly bounded.  The
expectation appearing in the statement of the lemma is equal to
$E_{\pi^n_N} [F]$. By \eqref{feq3} and since $F$ vanishes on $\ms
E^m_N$, $m<n$, and is uniformly bounded, this expectation is equal to
$E_{\pi^{\otimes n}_N} [F(\{{\bf x} \})] + c_N$, where $\lim_N c_N = 0$.

By definition of $F$,
\begin{equation*}
E_{\pi^{\otimes n}_N} [F(\{{\bf x} \})] \;=\; 
\sum_{{\bf x} \in [\bb T^d_N]^n} \pi^{\otimes n}_N({\bf x}) 
\, \mf 1\{\{ {\bf x} \} \in \ms E^n_N \} \,
\mb E^N_{\{{\bf x} \}} \Big[ \int_0^{\tau_{n-1}} R(A_N(s)) \, ds \Big] \;.
\end{equation*}
Up to time $\tau_{n-1}$ the evolution of $A_N(s)$ corresponds to the
evolution of $n$ independent particles. We may thus replace $A_N(s)$
by $\{ {\bf x}^n_N (s) \}$ inside the expectation, where $\tau_{n-1}$
represents in this context the first time two particles meet. The
previous sum is thus equal to
\begin{equation*}
\sum_{{\bf x} \in [\bb T^d_N]^n} \pi^{\otimes n}_N({\bf x}) 
\, \mf 1\{\{ {\bf x} \} \in \ms E^n_N \} \,
\widetilde{\mb E}^N_{{\bf x}} \Big[ \int_0^{\tau_{n-1}} 
R(\{{\bf x}^n_N (s)\}) \, ds \Big] \;,
\end{equation*}
where $\widetilde{\mb P}^N_{{\bf x}}$ represents the distribution of
${\bf x}^n_N$ starting from ${\bf x}$.

Since $\tau_{n-1}=0$ if the process ${\bf x}^n_N (s)$ starts from a
configuration ${\bf x}$ such that $\{ {\bf x} \} \not \in \ms E^n_N$,
we may remove the indicator in the previous sum. As the process is
reversible and $\pi^{\otimes n}_N$ is its unique stationary state, by
Lemma \ref{lem5}, the sum is equal to
\begin{equation*}
\sum_{{\bf x} \in [\bb T^d_N]^n} \pi^{\otimes n}_N({\bf x})  \, 
\widetilde{\mb E}^N_{{\bf x}} [ \tau_{n-1}] \,  R(\{ {\bf x} \}) \;.
\end{equation*}

As $\tau_{n-1}=0$ if the process ${\bf x}^n_N (s)$ starts from a
configuration ${\bf x}$ such that $\{ {\bf x} \} \not \in \ms E^n_N$,
we may restrict the sum to configurations ${\bf x}$ such that $\{ {\bf
  x} \}\in \ms E^n_N$. For such a configuration, $\widetilde{\mb
  E}^N_{{\bf x}} [ \tau_{n-1}] = {\mb E}^N_{\{{\bf x}\}} [
\tau_{n-1}]$. Hence, the last sum is equal to
\begin{equation*}
\sum_{{\bf x} \in [\bb T^d_N]^n} \pi^{\otimes n}_N({\bf x})  \, 
{\mb E}^N_{\{{\bf x}\}} [\tau_{n-1}] \,  R(\{ {\bf x} \}) 
\;=\; \sum_{A\in \ms E_{N}^{n}} {\mb E}^N_{A} [\tau_{n-1}] \,  R(A) \,
\sum_{\{{\bf x}\} = A} \pi^{\otimes n}_N({\bf x})  \;,
\end{equation*}
where the last sum is performed over all configuration ${\bf x} \in
[\bb T^d_N]^n$ such that $\{{\bf x}\} = A$. Comparing $\sum_{\{{\bf
    x}\} = A} \pi^{\otimes n}_N({\bf x})$ with $\pi^n_N(A)$ yields
that the previous sum is equal to
\begin{equation*}
\big( 1 \,+\, O ( N^{-d})\, \big)
\sum_{A\in \ms E_{N}^{n}} {\mb E}^N_{A} [\tau_{n-1}] \,  R(A) \, \pi^n_N(A)\;,
\end{equation*}
where $O ( N^{-d})$ is a sequence of numbers whose absolute value is
bounded by $C_0 N^{-d}$ for some finite constant $C_0$.  By Lemma
\ref{lem9}, the sum converges to $1$. In particular, the term
$O(N^{-d})$ times the sum is negligible. This completes the proof of
the corollary.
\end{proof}

\section{Proof of Theorem \ref{th1}}
\label{teo12}

The proof of Theorem \ref{th1} is divided in two steps. We show in
Lemma \ref{lem1} that the sequence $(\ms P^N)_{N}$ is tight, and in
Lemma \ref{lem15} that all limit points solve the $\big(C^{1}(S),\ms
L\big)$-martingale problem introduced in Proposition \ref{prop1}.

Denote by $\bb P^N_A$, $A\in E_N$, the probability
measure on $D(\bb R_{+}, E_N)$ induced by the Markov chain $A_N(t)$
\emph{speeded-up by $\theta_N$} starting from $A$. When $A=\bb T^d_N$,
we denote $\bb P^N_A$ simply by $\bb P^N$. Expectation with respect to
$\bb P^N_A$, $\bb P^N$ are represented by $\bb E^{N}_{A}$ and $\bb E^{N}$,
respectively. Note that
\begin{equation}
\label{26}
\ms P^N \;=\; \bb P^N \, \circ\, \widehat{\Psi}_N^{-1} \;,
\end{equation}
where $\widehat{\Psi}_N : D(\bb R_{+}, E_N) \to D(\bb R_{+}, S)$ is given
by $[\widehat{\Psi}_N (\omega)](t) = \Psi_N \big(\omega(t)\big)$.

In the next lemmata, expectation with respect to $\ms P^N,\;\ms P$ are
represented by $E_{\ms P^{N}},\; E_{\ms P}$, respectively.

\begin{lemma}
\label{lem15}
Let $\ms P$ be a limit point of the sequence $(\ms P^N)_{N}$, and let
$f: \bb R \to \bb R$ be a function in $C^1$ which is constant in a
neighborhood of the origin: there exists $\delta>0$ such that
$f(x)=f(0)$ for $x\le \delta$. Then, under $\ms P$, the process
defined by \eqref{24} is a martingale.
\end{lemma}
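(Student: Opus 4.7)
The plan is to start from the Dynkin martingale of the speeded-up chain $\{A_N(t\theta_N)\}_{t\geq 0}$ under $\bb P^N$ and ``close'' it in terms of the reduced process $\bb X_N$ via the replacement lemma, Proposition \ref{p01}. Passing to the limit along a subsequence $\ms P^{N_k}\to\ms P$ will then transfer the martingale property.

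First I would introduce $G:\bb N\to\bb R$ by $G(1):=0$ and $G(n):=f(1/(n-1))-f(1/n)$ for $n\geq 2$; because $f$ is constant on $[0,\delta]$, $G$ eventually vanishes, and $f\equiv f(0)$ near $0$ forces $f'(0)=0$. With the convention $\bs\lambda(1)=0$, one checks that $\ms Lf(1/n)=\bs\lambda(n)\, G(n)$ for every $n\geq 1$ and $\ms Lf(0)=0$; in particular $\ms Lf$ is continuous on the compact set $S$. The computation in Subsection \ref{sket} then identifies
\begin{equation*}
\mc M_N(t)\;:=\;f(\bb X_N(t))\;-\;\int_0^t \theta_N\, R(A_N(s\theta_N))\, G(|A_N(s\theta_N)|)\, ds
\end{equation*}
as a $\bb P^N$-martingale with respect to $\sigma(A_N(s\theta_N):s\leq t)$, hence also with respect to the smaller filtration generated by $\bb X_N$. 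Setting
\begin{equation*}
M_N(t)\;:=\;f(\bb X_N(t))\;-\;\int_0^t\ms Lf(\bb X_N(s))\, ds\;,
\end{equation*}
the difference $(\mc M_N-M_N)(t)-(\mc M_N-M_N)(t_0)$ is precisely the time integral controlled by Proposition \ref{p01} with $F=G$.

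Fix $0<t_0<t$ and a bounded continuous functional $\Phi$ on $D([0,t_0],S)$; the random variable $B^N:=\Phi(\bb X_N|_{[0,t_0]})$ is uniformly bounded and $\sigma(A_N(s\theta_N):s\leq t_0)$-measurable. Combining the martingale property of $\mc M_N$ with Proposition \ref{p01} would give
\begin{equation*}
E_{\ms P^N}\big[(M(t)-M(t_0))\,\Phi(X|_{[0,t_0]})\big]\;=\;\bb E^N\big[(M_N(t)-M_N(t_0))\, B^N\big]\;\xrightarrow[N\to\infty]{}\;0\;,
\end{equation*}
with $M(t):=f(X_t)-\int_0^t\ms Lf(X_s)\, ds$ denoting the canonical functional. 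Choosing $t_0$ and $t$ outside the countable set of times $s$ for which $\ms P[X_{s-}\neq X_s]>0$, the map $\omega\mapsto \{f(\omega(t))-f(\omega(t_0))-\int_{t_0}^t\ms Lf(\omega(s))\, ds\}\, \Phi(\omega|_{[0,t_0]})$ is bounded and continuous on $D(\bb R_+,S)$ at $\ms P$-a.e.\ $\omega$, so the continuous mapping theorem yields $E_{\ms P}[(M(t)-M(t_0))\Phi(X|_{[0,t_0]})]=0$. The identity then extends to all $0\leq t_0<t$ by right-continuity of $X$ and of the Lebesgue integral.

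The main obstacle of the argument is precisely the closure step, which is already supplied by Proposition \ref{p01}; the remaining work is the routine verification of continuity of the limit functional under Skorohod convergence and of the measurability hypothesis of the replacement lemma.
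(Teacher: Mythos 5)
Your argument follows the same route as the paper: take the Dynkin martingale for $f\circ\Psi_N$ under $\bb P^N$, use Proposition \ref{p01} to replace the rate $\theta_N R(A_N(s\theta_N))$ by $\bs\lambda(|A_N(s\theta_N)|)$ (equivalently by $\ms Lf(\bb X_N(s))$), and then pass to the limit in the approximate-martingale identity tested against functionals of $\bb X_N|_{[0,t_0]}$. The places where you are more explicit than the paper are exactly the points the paper glosses over: you verify that $F=G$ eventually vanishes so that Proposition \ref{p01} applies, you record that $\ms Lf$ is continuous on $S$, and --- most importantly --- you address the fact that coordinate evaluations are not continuous in the Skorohod topology, by restricting to $t_0,t$ outside the countable set of fixed discontinuities of $\ms P$ and then extending by right-continuity. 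These are genuine technical gaps in the paper's half-page proof that your write-up fills in, so this is a welcome elaboration rather than just a paraphrase.

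One slip: the aside ``\dots a $\bb P^N$-martingale with respect to $\sigma(A_N(s\theta_N):s\leq t)$, hence also with respect to the smaller filtration generated by $\bb X_N$'' is false as stated. The process $\mc M_N$ is not adapted to the $\bb X_N$-filtration (it involves $R(A_N(s\theta_N))$, which is not a function of $|A_N(s\theta_N)|$), and a martingale does \emph{not} remain a martingale for a sub-filtration to which it is not adapted. Fortunately this claim plays no role: all you actually use is that $B^N$ is measurable with respect to the \emph{larger} $\sigma$-field $\sigma(A_N(s\theta_N):s\leq t_0)$, so that $\bb E^N[B^N(\mc M_N(t)-\mc M_N(t_0))]=0$. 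I would simply delete that clause. Also note that Proposition \ref{p01} is stated for $t_0>0$; to cover the boundary case $t_0=0$, one should add a sentence observing that $\mc G_0$ is $\ms P$-trivial (since $\ms P[X_0=0]=1$) and invoke right-continuity of $t_0\mapsto M_{t_0}$ as $t_0\downarrow 0$; the paper itself is silent on this point.
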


\begin{proof}
Assume without loss of generality that $(\ms P^N)_{N}$ converges to
$\ms P$.  Let $f: \bb R \to \bb R$ be a function in $C^1$ which is
constant in a neighborhood of the origin. Denote by $M_N(t)$ the $\bb
P^N$-martingale given by
\begin{equation*}
f(\bb X_N(t)) \,-\, f(\bb X_N(0)) \,-\,
\int_0^t \theta_N \, (L_N f)(\Psi_N(A_N(s\theta_N)))\, ds\;,
\end{equation*}
where $\bb X_N(t) = \Psi_N(A_N(t\theta_N))$. Since
\begin{equation*}
(L_N f)(\Psi_N(A)) \;=\; R(A) \, \Big\{ f \Big(\frac x{1-x} \Big) - f(x)
\Big\}\;, 
\end{equation*}
where $x=\Psi_N(A)$, and $R(A)$ is the jump rate introduced in
\eqref{27}, the martingale $M_N(t)$ can be written as
\begin{equation*}
f(\bb X_N(t)) \,-\, f(\bb X_N(0)) \,-\, \theta_N\, 
\int_0^{t} \, R(A_N(s\theta_N))\,
\Big\{ f \Big(\frac {\bb X_N(s)}{1- \bb X_N(s)} \Big) 
- f(\bb X_N(s)) \Big\} \, ds\;.
\end{equation*}

Fix $0\le t_0$, $k\ge 1$, $0\le s_1< \dots < s_k \le t_0$, a
bounded function $G: \bb R^k \to \bb R$, and let $B^{N} = G(\bb X_N(s_1),
\dots, \bb X_N(s_k))$. Since $M_N$ is a martingale, for every $t_0\le t$,
\begin{equation*}
\bb E^N \Big[ B^{N} \, \big\{M_N(t)-M_N(t_0)\big\}\, \Big]\;=\; 0\;.
\end{equation*}
By Proposition \ref{p01}, in the integral part of the martingale we
may replace the rate $\theta_N R(A_N(s\theta_N))$ by $\bs
\lambda(|A_N(s\theta_N)|) = \bs r (\bb X_N(s))$ to obtain that
\begin{equation}
\label{25}
\lim_{N\to\infty} \bb E^N \Big[ B^{N} \, 
\big\{ \widehat{M}_N(t)-\widehat{M}_N(t_0)\big\}\, \Big]\;=\; 0\;,
\end{equation}
where 
\begin{align*}
\widehat{M}_N(t) \;=\;
f(\bb X_N(t)) \,-\, f(\bb X_N(0)) \,-\,
\int_0^{t} \, \bs r(\bb X_N(s))\,
\Big\{ f \Big(\frac {\bb X_N(s)}{1-\bb X_N(s)} \Big) - f(\bb X_N(s)) \Big\} \, ds
\;.
\end{align*}
Notice that the process $\widehat{M}_N(t)$ is expressed as a function
of $\bb X_N$. Therefore, in view of \eqref{26}, we may replace in
\eqref{25} the probability $\bb P^N$ by $\ms P^N$ and write
\begin{equation*}
\lim_{N\to\infty} E_{\ms P^{N}} \Big[ B^{N} \, 
\big\{ \widehat{M}_N(t)-\widehat{M}_N(t_0)\big\}\, \Big]\;=\; 0\;, 
\end{equation*}
Since, by assumption, $(\ms P_N)_{N}$ converges to $\ms P$, 
\begin{equation*}
E_{\ms P} \Big[ B^{N} \, 
\big\{ \widehat{M}_N(t)-\widehat{M}_N(t_0)\big\}\, \Big]\;=\; 0\;.
\end{equation*}
This shows that \eqref{24} is a martingale under $\ms P$ and completes
the proof of the lemma.
\end{proof}

We turn to the tightness of $(\ms P^{N})_{N}$.  Remember that for
$w\in D(\bb R_{+},S)$, the \emph{modified modulus of continuity} is
defined as
\begin{equation*}
\tilde{\omega}(w,t,\delta)\;:=\;\inf_{\Delta}\,\max_{k}\,
\sup_{t_{k}\leq r,s<t_{k+1}}\|w(s)-w(r)\|\;,\quad t>0\;,\quad \delta>0\;,
\end{equation*}
where the infimum extends over all partitions $\Delta=\{0=t_{0}<t_{1}<
\dots<t_{\ell}<t\}$ such that $t_{k+1}-t_{k}\geq \delta$ for
$k=1,\dots,\ell-1$. It is well known (see for instance \cite[Theorem
4.8.1]{k11}) that the tightness follows from
\begin{enumerate}
\item\label{tig1} for any $t\in\bb R_{+}$, the sequence $\big(\bb
  X_{N}(t)\big)_{N}$ is tight in $S$; and

\item\label{tig2} for all $\varepsilon>0$, $t>0$,
\begin{equation}\label{cond2}
\lim_{\delta\to0}\,\sup_{N}\,\ms P^{N}[\tilde{\omega}(\bb X_{N},t,\delta)>\varepsilon]\;=\;0\;.
\end{equation}
\end{enumerate}

Since $\bb X_{N}(t)\in S$ for all $t\in\bb R_{+}$ and $S$ is compact, condition \eqref{tig1}
holds immediately thanks to Prohorov's criterion. Denote by $\sigma_j$, $j\ge 1$,
the hitting time of $1/j$: $\sigma_j = \inf\{t\ge 0: \bb X(t) = 1/j\}$.

\begin{lemma}\label{L4.1}
Condition \eqref{tig2} follows from
\begin{equation}
\label{05}
\lim_{\delta\to 0}
\limsup_{N\to\infty} \ms P^N \big[ \sigma_{j-1}-\sigma_j \le \delta \,
\big] \;=\; 0\;,\quad\forall\,j\geq2\;.
\end{equation}
\end{lemma}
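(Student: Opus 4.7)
The plan is to exploit that $\bb X_N$ is a nondecreasing, piecewise-constant step process on $[0, t]$: at each coalescence epoch it jumps from its current value $1/k$ to $1/(k-1)$, it is constant (equal to $1/k$) on $[\sigma_k,\sigma_{k-1})$, and its jump sizes $1/(k(k-1))$ become arbitrarily small once $k$ is large. Consequently the only contribution to the oscillation on an interval comes from the jumps actually occurring inside it, so ``microscopic'' jumps at large $k$ can be absorbed for free, while the finitely many ``macroscopic'' jumps at small $k$ are exactly what \eqref{05} lets us separate in time.

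Fix $\varepsilon>0$ and $t>0$, and choose an integer $J$ with $1/J<\varepsilon$. For each realization I would form the random partition $\Delta^N$ of $[0,t)$ whose interior points are those $\sigma_j$ with $2\le j\le J$ that happen to lie in $(0,t)$, arranged in increasing order. Since $j\mapsto\sigma_j$ is strictly decreasing, this list reads $\sigma_J<\sigma_{J-1}<\dots<\sigma_{j^*}$, where $j^*=\min\{j\le J:\sigma_j<t\}$ (the list being empty when no such $j$ exists). On the initial block $[0,\sigma_J\wedge t)$ the process takes only values $\le 1/(J+1)<\varepsilon$, hence oscillates by strictly less than $\varepsilon$. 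On every other block of $\Delta^N$ --- any middle block $[\sigma_j,\sigma_{j-1})$ with $j\le J$, as well as the terminal block $[\sigma_{j^*},t)$ --- the process is constant, so the oscillation vanishes. Thus $\tilde\omega(\bb X_N,t,\delta)<\varepsilon$ as soon as every interior gap of $\Delta^N$ has length at least $\delta$, and the collection of interior gaps is a (random) subset of $\{\sigma_{j-1}-\sigma_j:2\le j\le J\}$.

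A union bound over this finite family then gives
\begin{equation*}
\ms P^N\big[\tilde\omega(\bb X_N,t,\delta)>\varepsilon\big]
\;\le\;\sum_{j=2}^{J}\ms P^N\big[\sigma_{j-1}-\sigma_j\le\delta\big]\,,
\end{equation*}
and applying \eqref{05} to each of the $J-1$ terms yields $\limsup_N\ms P^N[\tilde\omega(\bb X_N,t,\delta)>\varepsilon]\to 0$ as $\delta\to 0$. To upgrade $\limsup_N$ to the $\sup_N$ required by \eqref{cond2} I would observe that for each fixed $N$ the process $\bb X_N$ has only finitely many jumps on $[0,t]$ almost surely, so $\ms P^N[\tilde\omega(\bb X_N,t,\delta)>\varepsilon]\to 0$ as $\delta\to 0$ for every individual $N$; combined with the tail estimate above this delivers the desired uniform vanishing.

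I do not expect any substantive obstacle. The one mildly delicate point is endpoint bookkeeping --- one must uniformly handle the cases $\sigma_J\ge t$ (the partition reduces to $\{0\}$ and the single block $[0,t)$ has oscillation $\le 1/J<\varepsilon$) and $\sigma_1<t$ (the terminal block sits at the absorbing value $1$) alongside the generic case --- but the essential mechanism is simply that coarsening at level $J$ kills the tail of microscopic jumps for free, while \eqref{05} quantifies the temporal separation of the finitely many macroscopic ones.
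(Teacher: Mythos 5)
Your proof is correct and takes essentially the same route as the paper: build a random partition of $[0,t)$ out of the hitting times $\sigma_J<\sigma_{J-1}<\dots<\sigma_{j^*}$, observe that the initial block has oscillation below $1/J<\varepsilon$ and all the other blocks carry a constant value, invoke \eqref{05} to control the interior gaps, and finally upgrade $\limsup_N$ to $\sup_N$ by handling the finitely many small values of $N$ separately. The one place you genuinely streamline the paper's argument is the treatment of the event $\{\sigma_J\ge t\}$: the paper first invokes Proposition \ref{p02} and the Markov inequality to make $\ms P^N[\sigma_n\ge t]$ small and then restricts attention to $\{\sigma_n<t\}$, whereas you simply note that on $\{\sigma_J\ge t\}$ the trivial partition $\{0\}$ already gives oscillation below $1/J<\varepsilon$ on the single block $[0,t)$, so no control of that event is needed. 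Your union bound is also formally a bit cleaner than the paper's intersection of high-probability events, though this is cosmetic. One very small caveat worth spelling out: the partition $\Delta^N$ is only as described when $\sigma_J>0$, i.e.\ $N^d>J$; for smaller $N$ the initial block is $[0,\sigma_{n_0-1})$ with $n_0=1/\bb X_N(0)$ (on which $\bb X_N$ is constant), so the union bound still holds, and in any case those finitely many $N$ are absorbed by your final ``fixed $N$'' argument. Nothing essential is missing.
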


\begin{proof}
Assume that \eqref{05} holds, fix $\varepsilon>0$, $t>0$, $\eta>0$ and
choose $n\in\mathbb{N}$ such that $1/n\leq\varepsilon$. By
Proposition \ref{p02} and by the Markov inequality
\begin{equation*}
\ms P^{N}[\bb X_{N}(t)\leq 1/n]=\;\mb P^{N}[\,|A_{N}(t\theta_{N})|\geq n\,]\;
\leq\;\frac{\mb E^{N}[\, |A_{N}(t\theta_{N})|\, ]}{n}\;
\leq\;\frac{C(t,d)}{n}\;,
\end{equation*}
where $C(t,d)$ is a positive constant depending only on $t$ and
$d$. Then, increasing $n$ if necessary, we can assume that
\begin{equation*}
\ms P^{N}[\sigma_{n}<t]\;>\;1-\eta/3\;.
\end{equation*}

Our assumption implies that there are $\delta_{0}>0$ and
$M\in\mathbb{N}$ such that
\begin{equation*}
\ms P^{N}[\sigma_{j-1}-\sigma_j \geq \delta_{0},\;
\text{for all }j\in\{2,3,\dots, n\}]\;>\;1-\eta/3,
\quad\forall\,N>M.
\end{equation*}
Let $m:=\min\{j\ge 1 :\sigma_{j}<t\}$. On the set
$\{\sigma_{n}<t\}$, define the random partition
$\Delta:=\{0=t_{0}<t_{1}=\sigma_{n}<\dots<t_{\ell}=\sigma_{m}<t\}$. Since
$\bb X_{N} (r)$ is constant in the intervals $[\sigma_{j},
\sigma_{j-1})$, using this partition we deduce that
\begin{equation*}
\tilde{\omega}(\bb X_{N},t,\delta)\leq1/n\leq\varepsilon\;,
\quad\forall\,\delta<\delta_{0}\;, \quad N>M\,,
\end{equation*}
on the event
\begin{equation*}
\{\sigma_{n}<t\}\cap\big\{\sigma_{j-1}-\sigma_j \geq \delta_{0},\;
\text{for all }j\in\{2,3,\dots, n\}\big\}\;,
\end{equation*}
that has probability at least $1-2\eta/3$. Hence
\begin{equation*}
\sup_{N>M}\,\ms P^{N}[\tilde{\omega}(\bb X_{N},t,\delta)>\varepsilon]<2\eta/3\;,\quad
\forall\delta<\delta_{0}\;.
\end{equation*}
On the other hand, it is clear that there is $\delta_{1}>0$ such that
\begin{equation*}
\ms P^{N}[\tilde{\omega}(\bb X_{N},t,\delta)>\varepsilon]
\;<\;\eta/3\;,\quad N\leq M\;,\quad\forall
\delta<\delta_{1}.
\end{equation*}
Therefore
\begin{equation*}
\sup_{N}\,\ms P^{N}[\tilde{\omega}(\bb
X_{N},t,\delta)>\varepsilon]<\eta\;,
\quad\forall\,\delta<
\min\{\delta_{0},\delta_{1}\}\;,
\end{equation*}
which completes the proof, since $\eta>0$ was arbitrary.
\end{proof}

We complete the proof of the tightness in the next lemma.

\begin{lemma}
\label{lem1}
The sequence of measures $(\ms P^N) _{N}$ is tight. 
\end{lemma}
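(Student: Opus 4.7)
The plan is to verify condition \eqref{05} from Lemma \ref{L4.1}, which together with compactness of $S$ (giving \eqref{tig1}) and the modulus-of-continuity criterion yields tightness. Since under $\ms P^N$ the coordinate process has the law of $\bb X_N(t) = \Psi_N(A_N(t\theta_N))$, and since $\bb X_N$ takes the value $1/j$ exactly on the interval $[\tau_j/\theta_N,\tau_{j-1}/\theta_N)$, we have $\sigma_j=\tau_j/\theta_N$, so \eqref{05} is equivalent to
\[
\lim_{\delta\to 0}\limsup_{N\to\infty}\mb P^N[\tau_{j-1}-\tau_j\le\delta\theta_N]\;=\;0,\qquad j\ge 2.
\]

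To handle this, I would apply the strong Markov property at $\tau_j$ to rewrite the probability as $\mb E^N\bigl[\mb P^N_{A_N(\tau_j)}[\tau_{j-1}\le\delta\theta_N]\bigr]$, and split according to whether $A_N(\tau_j)\in\mf G_N(j,a_N)$. On the scattered event, Lemma \ref{lem14} provides the uniform estimate $\mb P^N_A[\tau_{j-1}\le\delta\theta_N]\to 1-e^{-\bs\lambda(j)\delta}$, which vanishes as $\delta\to 0$ (after one verifies $\mf G_N(j,a_N)\subset\mf G_N(j,\ell_N)$ for a suitable choice of $\ell_N$). The core task is thus to prove
\[
\lim_{N\to\infty}\mb P^N[A_N(\tau_j)\notin\mf G_N(j,a_N)]\;=\;0.
\]

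I expect this scatteredness statement to be the main obstacle, since $\tau_j$ is a random coalescence instant at which the resulting configuration could a priori contain pairs of neighboring particles. The idea is to propagate scatteredness forward from a deterministic earlier time. Fix $t_0>0$ small and $M\in\mathbb{N}$ large, and bound the bad event by the union of four pieces: (i) $\{\tau_j\le t_0\theta_N\}$, controlled by \eqref{04} as $t_0\to 0$; (ii) $\{|A_N(t_0\theta_N)|>M\}$, controlled by Proposition \ref{p02} as $M\to\infty$; (iii) $\{A_N(t_0\theta_N)\notin\ms E_N^1\cup\bigcup_k\mf G_N(k,a_N)\}$, controlled by Corollary \ref{obsaN} as $N\to\infty$; and (iv) the complement of (i)--(iii), on which $\tau_j>t_0\theta_N$ forces $|A_N(t_0\theta_N)|>j$, so that $A_N(t_0\theta_N)\in\mf G_N(k,a_N)$ for some $k$ in the \emph{finite} range $j+1\le k\le M$. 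The Markov property at $t_0\theta_N$, combined with Proposition \ref{lem3} applied for each such $k$ separately, then shows that $A_N(\tau_j)\in\mf G_N(j,a_N)$ with probability tending to $1$.

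Combining the pieces and taking limits in the order $N\to\infty$, $M\to\infty$, $t_0\to 0$, and finally $\delta\to 0$ gives $\limsup_N\mb P^N[\tau_{j-1}-\tau_j\le\delta\theta_N]\le 1-e^{-\bs\lambda(j)\delta}\to 0$, establishing \eqref{05} and hence tightness. The structural novelty is the decomposition in step (iv): the boundedness of the particle count $M$ supplied by Proposition \ref{p02} is what converts the propagation of scatteredness into a finite (in $k$) application of Proposition \ref{lem3}, which only treats one $k$ at a time.
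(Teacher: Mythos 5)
Your proof is correct and follows essentially the same route as the paper: reduce via Lemma~\ref{L4.1} to \eqref{05}, then use Proposition~\ref{p02}, \eqref{04}, scatteredness propagation (Corollary~\ref{obsaN} in your version, Lemma~\ref{lem16} in the paper's), Proposition~\ref{lem3}, and the exponential law from Lemma~\ref{lem14}/Proposition~\ref{lem4}. The only difference is that you apply the strong Markov property at $\tau_j$ up front and then separately establish $\mb P^N[A_N(\tau_j)\notin\mf G_N(j,a_N)]\to 0$, whereas the paper first reduces to a scattered configuration at a deterministic time $2\alpha\theta_N$ and defers the strong Markov step at $\tau_j$ to the last line; both organizations use the same ingredients and are equally valid.
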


\begin{proof}
By Lemma \ref{L4.1} it is enough to show \eqref{05}. In terms of the
measure $\mb P^N$, the probability appearing in \eqref{05} can be
rewritten as
\begin{equation*}
\mb P^N \big[ \tau_{j-1}-\tau_j \le \delta\, \theta_N \,
\big]\;.
\end{equation*}

Fix $\epsilon >0$ and $M>j$. In view of \eqref{04}, choose $\alpha >0$
small enough for $\mb P^N [ \tau_M \le 3\alpha\theta_N ] \le \epsilon$
for all $N$ sufficiently large. By Proposition \ref{p02}, choose $K\ge
M$ such that $\mb P^N [ \, |A_N(\alpha \theta_N)| \ge K ] \le \epsilon$
for all $N$ sufficiently large. Hence, the probability appearing in
\eqref{05} is less than or equal to
\begin{equation*}
\mb P^N \Big[\, |A_N(\alpha \theta_N)| \le K \,,\,
\tau_M \ge 3\alpha \theta_N \,,\,
\tau_{j-1}-\tau_j \le \delta \theta_N \, \Big]
\;+\; 2\, \epsilon \;.
\end{equation*}
By Lemma \ref{lem16}, this expression is less than or equal to
\begin{equation*}
\mb P^N \Big[\, |A_N(\alpha \theta_N)| \le K \,,\, A_N(2\alpha \theta_N) \in \mf G_N
\,,\, \tau_M \ge 3\alpha \theta_N \,,\,
\tau_{j-1}-\tau_j \le \delta \theta_N \, \Big]
\;+\; 3\, \epsilon \;.
\end{equation*}
By the Markov property, this sum is bounded by
\begin{equation*}
\max_{M \le n \le K} \max_{A\in \mf G_N(n,a_N)} \mb P^N_A \big[\, 
\tau_{j-1}-\tau_j \le \delta \theta_N \, \big]
\;+\; 3\, \epsilon \;.
\end{equation*}
By Propositions \ref{lem3}, \ref{lem4} and the strong Markov property
at time $\tau_j$, the first term of the previous expression vanishes
as $N\uparrow\infty$ and $\delta\to 0$.
\end{proof}

\section{Uniqueness}
\label{sec5}

In order to state the uniqueness result as it has been used in Section
\ref{teo12} we need to introduce the subset $\dom\subseteq C^1(S)$ of
functions $f: S\to \bb R$ which are constant on a neighborhood of
zero: $f\in \dom$ if and only if for some $k(f)\in \bb N$ we have
\begin{equation*}
f(0)=f(1/k) \;, \quad \forall k > k(f)\;.
\end{equation*}
We shall say that a probability measure $\ms P$ on the measurable space $(D(\bb R_+,S),\mc G_{\infty})$ is a solution of the $(\dom,\ms L)$ (resp. $(C^1(S),\ms L)$)-martingale problem  if
\begin{equation}\label{mm1}
M^f_t \;:=\; f(X_t) - \int_0^t (\ms Lf)(X_s)\,ds \;,\quad t\ge 0 
\end{equation}
is a $\ms P$-martingale for every $f\in \dom$ (resp. $f\in C^1(S)$). In addition, we say that $\ms P$ is starting at $x\in S$ whenever $\ms P\{X_0=x\}=1$. 

\subsection{Uniqueness on $S\setminus \{0\}$}
For each $k\in \bb N$, let $\ms P_{1/k}$ be the law on $(D(\bb R_+,S),\mc G_{\infty})$ of a Markov process on $S$ starting at $1/k$ and with transition rates 
$$
q\left(\frac1 n,\frac 1{n-1}\right) \;=\; {n \choose 2}\;, \quad \textrm{for $2\le n\le k$}
$$
and zero elsewhere. By Dinkyn's martingales, the process
\begin{equation}\label{lam1}
f(X_t) - \int_0^t (\ms L^{k}f)(X_s)\,ds \;,\quad t\ge 0 
\end{equation}
is a $\ms P_{1/k}$-martingale, for all $f:S\to \bb R$, where
\begin{equation*}
\ms L^kf(x) \;:=\;
\begin{cases}
{n \choose 2} \left\{ f\left( \frac{1}{n-1}\right) - f\left( \frac{1}{n}\right) \right\}\,, & \textrm{if $ x = \frac{1}{n} \in [\frac{1}{k}, \frac{1}{2}]$}\;, \\
0\,, & \textrm{otherwise} \;.
\end{cases}
\end{equation*}
In particular, $\ms P_{1/k}$ is a solution of the $(\dom,\ms L^k)$-martingale problem. Moreover, uniqueness for this problem can be obtained by standard methods so that
\begin{remark}\label{obs9}
For each $k\in \bb N$, $\ms P_{1/k}$ is the unique solution of the $(\dom,\ms L^k)$-martingale problem starting at $1/k$.
\end{remark}

Since $\ms P_{1/k}\{X_t \ge 1/k\,,\; \forall \, t\ge 0\}=1$ and
\begin{equation}\label{obs2}
\ms L^kf(x) \;=\; \ms Lf(x)\;, \quad \textrm{for all $x\ge 1/k$}
\end{equation}
we may then replace $\ms L^k$ by $\ms L$ in \eqref{lam1}. Therefore,

\begin{remark}\label{obs10}
For each $x\in S\setminus \{0\}$, $\ms P_{x}$ is a solution of the $(C^1(S),\ms L)$, and so, also the $(\dom,\ms L)$-martingale problem.
\end{remark}

We now prove that, for all $x\in S\setminus \{0\}$, $\ms P_x$ is actually the unique solution for both martingale problems when starting at $x$. Of course, it is enough to prove this assertion for $(\dom,\ms L)$. In virtue of Remark \ref{obs9}, it suffices to prove that under any such solution $X_t \ge 1/k$, $\forall \, t\ge 0$ almost surely.

\begin{lemma}\label{unik}
For each $x\in S\setminus \{0\}$, $\ms P_{x}$ is the unique solution of the $(\dom,\ms L)$-martingale problem starting at $x\in S$.
\end{lemma}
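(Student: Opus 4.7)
The plan is to reduce uniqueness for the $(\dom,\ms L)$-martingale problem at $x=1/k\in S\setminus\{0\}$ to the uniqueness of the $(\dom,\ms L^{k})$-martingale problem already provided by Remark \ref{obs9}. Existence is furnished by Remark \ref{obs10}, so let $\ms P$ be any solution of the $(\dom,\ms L)$-martingale problem starting at $1/k$. The pivotal claim I would establish is
\[
\ms P\bigl\{X_{t}\geq 1/k\text{ for all }t\geq0\bigr\}\;=\;1\,.
\]
Once this confinement is known, \eqref{obs2} implies $(\ms Lf)(X_{s})=(\ms L^{k}f)(X_{s})$ $\ms P$-a.s.\ for every $f\in\dom$ and every $s\geq 0$, so $\ms P$ is also a solution of the $(\dom,\ms L^{k})$-martingale problem starting at $1/k$, and Remark \ref{obs9} forces $\ms P=\ms P_{1/k}=\ms P_{x}$.

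To prove the confinement I would exhibit a single well-chosen test function $g\in\dom$. Define $g$ on $S$ by $g(1/j)=0$ for $1\leq j\leq k$, $g(1/j)=1$ for $j\geq k+1$, and $g(0)=1$, and extend it to a $C^{1}$ function on a neighborhood of $S$ that is constant on a neighborhood of $0$. Since the points $\{1/j:j\geq 1\}$ are isolated in $\bb R$ and $g$ is already prescribed to be constant near the only accumulation point $0$, such an extension is a routine piecewise construction, so $g\in\dom$. A direct evaluation from \eqref{def1} gives $(\ms Lg)(1/(k+1))=-\binom{k+1}{2}$ and $(\ms Lg)(y)=0$ at every other point of $S$: at $y=0$ because $g$ is locally constant there, and at $y=1/j$ with $j\neq k+1$ because $g(1/(j-1))=g(1/j)$. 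Plugging $g$ into \eqref{mm1} and taking expectation, together with $g(X_{0})=g(1/k)=0$, yields for every $t\geq0$
\[
E_{\ms P}\bigl[g(X_{t})\bigr]\;+\;\binom{k+1}{2}\int_{0}^{t}\ms P\{X_{s}=1/(k+1)\}\,ds\;=\;0\,.
\]
Since $g\geq 0$ both summands are non-negative, and hence both vanish. The vanishing of the first term says $\ms P\{X_{t}\in\{1,1/2,\dots,1/k\}\}=1$ for every $t\geq 0$; right-continuity of paths combined with the closedness of $\{y\in S:y\geq 1/k\}$ in $S$ then upgrades this, over a countable dense set of times, to the claim.

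I do not foresee a serious obstacle. The whole argument rests on selecting a non-negative test function whose image under $\ms L$ is a negative multiple of a single indicator, so that the martingale identity turns into a balance between two non-negative quantities and pins each to zero. The only piece of care is the piecewise $C^{1}$ extension of $g$, which is trivial because $g$ is already constant near $0$, the sole accumulation point of $S$.
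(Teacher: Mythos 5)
Your proof is correct, and it takes a genuinely different route from the paper. The paper's argument is a localization/optional-stopping one: it introduces the exit time $\tau=\inf\{t\ge 0:X_t<1/k\}$, shows that the stopped law $\ms P\circ(X^\tau)^{-1}$ solves the truncated $(\dom,\ms L^k)$-martingale problem, invokes Remark~\ref{obs9} to identify this stopped law with $\ms P_{1/k}$, observes that $\ms P_{1/k}$ never leaves $[1/k,1]$ so that $\ms P(\tau=\infty)=1$, and therefore $\ms P\circ(X^\tau)^{-1}=\ms P$, which closes the loop. You instead prove the confinement $\ms P\{X_t\ge 1/k\ \forall t\}=1$ directly with a barrier-type test function $g\in\dom$, nonnegative, vanishing exactly on $\{1,1/2,\dots,1/k\}$ and chosen so that $\ms L g=-\binom{k+1}{2}\mathbf 1\{\cdot=1/(k+1)\}\le 0$; the martingale identity in expectation then forces $E_{\ms P}[g(X_t)]=0$ for every $t$, hence $X_t\ge 1/k$ a.s.\ for each fixed $t$, and the usual upgrade by right-continuity over a countable dense set of times gives the confinement for all $t$ simultaneously. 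After that you use \eqref{obs2} to see $\ms L f(X_s)=\ms L^k f(X_s)$ $\ms P$-a.s., so $\ms P$ also solves the $(\dom,\ms L^k)$-problem, and Remark~\ref{obs9} finishes. Both proofs rest on the same truncated uniqueness (Remark~\ref{obs9}); the paper's buys a clean, structural reduction at the price of the stopped-martingale argument (implicitly using that stopping preserves the martingale property for $(\dom,\ms L^k)$), whereas yours is more elementary and concrete, avoiding any stopping theorem and extracting the confinement from a single hand-built Lyapunov function. Both are sound and comparable in length.
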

\begin{proof}
Fix some $x=1/k$ and let ${\ms P}$ be a probability satisfying the assumption. Consider the $(\mc G_t)$-stopping time
$$
\tau\;:=\; \min \{ t\ge 0 : X_t < 1/k\} \;.
$$
Since
\begin{equation*}
\int_0^{t\land \tau} \ms Lf(X_s)ds \;=\; \int_0^{t} \ms L^k f(X_{s\land\tau})ds \;,\quad \forall t\ge 0\;,
\end{equation*}
then
\begin{equation*}
f(X_{t\land \tau}) - \int_{0}^{t} \ms L^k f(X_{s\land\tau})ds \;,\quad t\ge 0
\end{equation*}
is a $\ms P$-martingale, for any $f\in \dom$. Equivalently, if $X^{\tau} : D(\bb R_+,S) \to D(\bb R_+,S)$ denotes the measurable map defined by
\begin{equation*}
X_t \circ X^{\tau} \;=\; X_{t\land \tau} \;,\quad \forall t\ge 0
\end{equation*}
then the law of $X^{\tau}$ under $\ms P$, denoted by $\ms P\circ (X^{\tau})^{-1}$, turns out to be a solution of the $(\dom, \ms L^k)$-martingale problem. By Remark \ref{obs9} we conclude that
\begin{equation}\label{al1}
\ms P\circ (X^{\tau})^{-1} \;=\; \ms P_{1/k} \;,
\end{equation}
which in turn implies that
\begin{equation*}
\ms P\big(X_{t\land\tau}\ge 1/k \,,\; \forall t\ge 0\big) \;= \; \ms P_{1/k}\big(X_{t}\ge 1/k \,,\; \forall t\ge 0 \big)
\end{equation*}
Since the right hand side above equals one, then $\ms P(\tau=\infty)=1$ and so 
\begin{equation}\label{al2}
\ms P\circ (X^{\tau})^{-1} \;=\; \ms P \;.
\end{equation}
The desired result follows from \eqref{al1} and \eqref{al2}.
\end{proof}

\subsection{A strong Markov property}
As our next step, we prove Lemma \ref{lmp} below which relates any solution of the $(\dom, \ms L)$-martingale problem with laws $\{\ms P_{x}\}_{x\in S\setminus \{0\}}$ we just introduced. 

Let $\vartheta: \bb R_+\times D(\bb R_+,S) \to D(\bb R_+,S)$ be the measurable map defined by
\begin{equation*}
X_t\circ \vartheta(s,\cdot) \;=\; X_{s+t}(\cdot) \;,\quad \textrm{for all $t,s\ge 0$} \;.
\end{equation*}
In addition, given any $(\mc G_t)$-stopping time $\tau$ we define $\vartheta_{\tau}:D(\bb R_+,S)\to D(\bb R_+,S)$ as
\begin{equation*}
\vartheta_{\tau}(\omega) \;:=\;
\begin{cases}
\vartheta(\tau(\omega),\omega)\,, & \textrm{if $\tau(\omega)<\infty$}\,, \\
\omega\,, & \textrm{otherwise}\;.
\end{cases}
\end{equation*}

Consider the system of neighborhoods of $0\in S$
\begin{equation*}
A_k := \{ x\in S : x < 1/k \}\;, \quad k\in \bb N \;,
\end{equation*}
and their corresponding exit times
\begin{equation*}
\sigma_k \;:=\; \inf\{ t\ge 0 : X_t \in S\setminus A_k \} \;, \quad k\in \bb N \;.
\end{equation*}
Since $A_k$ and $S\setminus A_k$ are closed subsets then every $\sigma_k$ is a stopping time and 
\begin{equation}\label{obs12}
X_{\sigma_k}\;\ge \; 1/k \quad {\rm on} \quad \{\sigma_k<\infty\}\;.
\end{equation}

\begin{lemma}\label{lmp}
Let $\ms P$ be any solution of the $(\dom,\ms L)$-martingale problem and let $k\in \bb N$. For any $\mc C\in \mc G_{\infty}$, we have
\begin{equation}\label{time1}
\ms P \{\vartheta_{\sigma_k} \in \mc C \,,\; \sigma_k<\infty \} \;=\; \int_{\{\sigma_k<\infty\}}  \ms P_{X_{\sigma_k}(\omega)}(\mc C) \,\ms P(d\omega) \;.
\end{equation}
(Recall observation \eqref{obs12}.)
\end{lemma}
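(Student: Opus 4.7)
The strategy is to establish a strong Markov property at $\sigma_k$ for any solution $\ms P$ of the $(\dom,\ms L)$-martingale problem by constructing a regular conditional distribution $Q(\omega,\cdot)$ of $\ms P\circ\vartheta_{\sigma_k}^{-1}$ given $\mc G_{\sigma_k}$, showing that on $\{\sigma_k<\infty\}$ this conditional distribution is $\ms P$-a.s.\ itself a solution of the $(\dom,\ms L)$-martingale problem starting at $X_{\sigma_k}(\omega)$, and then invoking the uniqueness Lemma \ref{unik} to identify $Q(\omega,\cdot)$ with $\ms P_{X_{\sigma_k}(\omega)}$. The conclusion \eqref{time1} will then follow by integrating the defining property of $Q$ against the indicator of $\{\sigma_k<\infty\}\in\mc G_{\sigma_k}$.

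Since $D(\bb R_+,S)$ is Polish, a regular conditional probability $Q(\omega,\cdot)$ exists. Because $X_0\circ\vartheta_{\sigma_k}=X_{\sigma_k}$ and by \eqref{obs12} we have $X_{\sigma_k}(\omega)\ge 1/k$ on $\{\sigma_k<\infty\}$, the measure $Q(\omega,\cdot)$ is concentrated on paths starting at the point $X_{\sigma_k}(\omega)\in S\setminus\{0\}$ for $\ms P$-a.e.\ such $\omega$. The key algebraic identity is that, for $f\in\dom$ and $0\le s<t$,
\begin{equation*}
M^f_t\circ\vartheta_{\sigma_k}\,-\,M^f_s\circ\vartheta_{\sigma_k}\;=\;M^f_{\sigma_k+t}\,-\,M^f_{\sigma_k+s}\;,
\end{equation*}
which follows by a direct change of variables in the integral defining $M^f$.

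Given this identity, one verifies the martingale property of $M^f$ under $Q(\omega,\cdot)$ as follows. Let $B\in\mc G_s$, so that $\mathbf{1}_B\circ\vartheta_{\sigma_k}$ is $\mc G_{\sigma_k+s}$-measurable. Observe that for every $f\in\dom$ the function $f$ is bounded on $S$ and $\ms Lf$ takes only finitely many nonzero values (one for each $n\in\{2,\dots,k(f)\}$), so $|M^f_t|\le 2\|f\|_\infty+t\|\ms Lf\|_\infty$. Applying the optional stopping theorem to the $\ms P$-martingale $M^f$ at the bounded stopping times $(\sigma_k\wedge N)+s$ and $(\sigma_k\wedge N)+t$, and then letting $N\to\infty$ via dominated convergence on $\{\sigma_k<\infty\}$, yields
\begin{equation*}
E_{\ms P}\bigl[(M^f_{\sigma_k+t}-M^f_{\sigma_k+s})\,\mathbf{1}_B(\vartheta_{\sigma_k})\,\mathbf{1}_{\{\sigma_k<\infty\}}\,\big|\,\mc G_{\sigma_k}\bigr]\;=\;0\;.
\end{equation*}
Since the collection of data $(f,s,t,B)$ determining the martingale property of $M^f$ under $Q(\omega,\cdot)$ is essentially countable—one may restrict $s,t$ to rationals, $B$ to a countable $\pi$-system generating $\mc G_s$, and, because $\dom$-functions affect the martingale problem only through their finite vector of values $\{f(1/n):1\le n\le k(f)\}$, $f$ to a countable dense family in each finite-dimensional slice—the above identity shows that for $\ms P$-a.e.\ $\omega\in\{\sigma_k<\infty\}$, $Q(\omega,\cdot)$ is a solution of the $(\dom,\ms L)$-martingale problem starting at $X_{\sigma_k}(\omega)\in S\setminus\{0\}$. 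Lemma \ref{unik} then forces $Q(\omega,\cdot)=\ms P_{X_{\sigma_k}(\omega)}$, and \eqref{time1} follows.

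The main technical obstacle lies in the optional-stopping step at the third paragraph: since $\sigma_k$ need not be integrable (or even a.s.\ finite), we cannot apply optional stopping directly, and must localize through $\sigma_k\wedge N$ and pass to the limit on $\{\sigma_k<\infty\}$; the linear growth bound on $|M^f_t|$ provides the dominating integrable envelope needed to carry this out.
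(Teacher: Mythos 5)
Your argument is correct and follows essentially the same route as the paper: both construct a regular conditional distribution given $\mc G_{\sigma_k}$, show it yields a solution of the $(\dom,\ms L)$-martingale problem starting at $X_{\sigma_k}(\omega)\in S\setminus\{0\}$, and then invoke the uniqueness Lemma~\ref{unik}. The only cosmetic difference is that you re-derive the ``martingale after a stopping time'' property by hand via optional stopping at $\sigma_k\wedge N$ and dominated convergence (using boundedness of the increment $M^f_{\sigma_k+t}-M^f_{\sigma_k+s}$), whereas the paper simply cites Stroock--Varadhan Theorems~1.2.10 and~1.3.4 for that step.
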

\begin{proof}
Fix $k\in \bb N$ and let $\{\ms Q_{\omega} : \omega \in D(\bb R_+,S)\}$ be a conditional probability distribution of $\ms P$ given $\mc G_{\sigma_k}$ such that for all $\omega \in D(\bb R_+,S)$ we have
\begin{equation}\label{reg6}
\ms Q_{\omega}(\mc A) \;=\; \delta_{\omega}(\mc A) \;,\quad \forall \mc A\in \mc G_{\sigma_k}\;.
\end{equation}
The existence of such $\{\ms Q_{\omega}\}$ is established in \cite[{Theorem 1.3.4}]{sv79} for a space of continuous paths but the same proof apply for $D(\bb R_+,S)$. Taking conditional expectation with respect to $\mc G_{\sigma_k}$ in the left hand side below we have
\begin{equation*}
\ms P\{\vartheta_{\sigma_k} \in \mc C\,, \; \sigma_k<\infty \} \;=\; \int_{\{\sigma_k<\infty\}} \ms Q_{\omega}\{\vartheta_{\sigma_k} \in \mc C\} \, \ms P(d\omega) \;.
\end{equation*}
Applying \eqref{reg6} we get $\ms Q_{\omega}\{\sigma_k= \sigma_k(\omega)\}=1$ for all $\omega$ and so the right hand side above equals
\begin{equation}\label{sq0}
\int_{\{\sigma_k<\infty\}} \ms Q_{\omega}\{\vartheta_{\sigma_k(\omega)} \in \mc C\}\, \ms P(d\omega) \;.
\end{equation}
Now, we relate $\{\ms Q_{\omega}\}$ to $\{\ms P_{x}\}_{x\in S\setminus \{0\}}$. For each $f\in \ms D_0$, we know that the process $(M^f_t)$
defined in \eqref{mm1} is a $\ms P$-martingale. Then, in virtue of \cite[Theorem 1.2.10]{sv79}, for each $f\in {\ms D}_0$ there exists some $\mc A_f \in \mc G_{\sigma_k}$ with $\ms P[\mc A_f]=1$ such that, for all $\omega\in \mc A_f\cap \{\sigma_k<\infty\}$,
\begin{equation}\label{sq1}
\textrm{$(M^f_t)$ is a $\ms Q_{\omega}$-martingale after time $\sigma_k(\omega)$},
\end{equation}
i.e. $\ms Q_{\omega}[M^f_{t_2} | \mc G_{t_1}] \stackrel{\mbox{\tiny $\ms Q_{\omega}$-a.s}}{=} M^f_{t_1}$, whenever $\sigma_k(\omega)\le t_1 < t_2$, where $\ms Q_{\omega}[\,\cdot \,|\, \cdot\,]$ stands for conditional expectation with repect to $\ms Q_{\omega}$. It follows from \eqref{sq1} that,
\begin{equation}\label{sq2}
\textrm{$(M^f_t)$ is a $\ms Q_{\omega}\circ (\vartheta_{\sigma_k(\omega)})^{-1}$-martingale }.
\end{equation}
Let us consider the countable subset of $\dom$
\begin{equation*}
\tilde{\ms D}_0 \;:=\; \big\{ f\in \ms D_0 : \textrm{$f(x)$ is a rational number for all $x\in S$} \big\}
\end{equation*}
and denote $\mc A := \bigcap_{f\in \tilde{\ms D}_0} \mc A_f$. Then, \eqref{sq2} implies that, for all $\omega\in \mc A\cap \{\sigma_k<\infty\}$,
\begin{equation*}
\textrm{$\ms Q_{\omega}\circ (\vartheta_{\sigma_k(\omega)})^{-1}$ is a solution of the $(\tilde{\ms D}_0,\ms L)$-martingale problem.}
\end{equation*}
But, given any $f\in \ms D_0$, $\exists$ $(f_n)$ in $\tilde{\ms D}_0$ such that $f_n\to f$ and $\ms Lf_n\to \ms Lf$, both pointwise, and
such that 
\begin{equation*}
\sup_{n\ge 1} \max_{x\in S} \big( |f_n(x)| + |\ms L f_n(x)| \big) <\infty \;.
\end{equation*}
By using this approximation it is easy to conclude that, for all $\omega\in \mc A\cap \{\sigma_k<\infty\}$,
\begin{equation}\label{sq3}
\textrm{$\ms Q_{\omega}\circ (\vartheta_{\sigma_k(\omega)})^{-1}$ is a solution of the $(\ms D_0,\ms L)$-martingale problem.}
\end{equation}
On the other hand, for all $\omega\in \{\sigma_k<\infty\}$,
\begin{equation*}
\ms Q_{\omega}\circ (\vartheta_{\sigma_k(\omega)})^{-1}\{X_0=X_{\sigma_k}(\omega)\} \;=\; \ms Q_{\omega}\{X_{\sigma_k(\omega)}=X_{\sigma_k}(\omega)\} \;=\; 1
\end{equation*}
(we applied \eqref{reg6} in the last equality.) Namely, for all $\omega\in \{\sigma_k<\infty\}$,
\begin{equation}\label{sq4}
\textrm{$\ms Q_{\omega}\circ (\vartheta_{\sigma_k(\omega)})^{-1}$ is starting at $X_{\sigma_k}(\omega)\in S\setminus \{0\}$\;,}
\end{equation}
where we used observation \eqref{obs12} for the last assertion. We may now conclude from \eqref{sq3}, \eqref{sq4} and the uniqueness result  established in Lemma \ref{unik} that
\begin{equation*}
\ms Q_{\omega}\circ (\vartheta_{\sigma_k(\omega)})^{-1} \;=\; \ms P_{X_{\sigma_k}(\omega)} \;,\quad \forall \omega \in \mc A \cap \{\sigma_k<\infty\}\;.
\end{equation*}
Since $\ms P(\mc A)=1$, this last assertion implies that \eqref{sq0} equals
\begin{equation*}
\int_{\{\sigma_k<\infty\}}  \ms P_{X_{\sigma_k}(\omega)}(\,\mc C\,) \,\ms P(d\omega) \;.
\end{equation*}
This concludes the proof.
\end{proof}

\subsection{A solution starting at $0\in S$}
From now on, we shall denote by $\ms P_0$ the law of $(\ms X_t)$ (defined in \eqref{xx}) so that we have now the \emph{complete} set of laws $\{\ms P_x : x\in S\}$. Obviously $\ms P_0$ starts at $0$. We prove now that $\ms P_0$ is a solution of the $(C^1(S),\ms L)$-martingale problem. Recall the sequence $(T_n)_{n\ge 2}$ of independent random variables considered in \eqref{nec}. For each $k\in \bb N$ define the process $(\ms X^k_t)$ as
\begin{equation*}
\ms X^k_t \;=\;
\begin{cases}
1/k\,, & 0\le t < T_k\,, \\
1/(k-1)\,, & T_k \le t < T_k + T_{k-1}\,, \\
\vdots & \vdots \\
1/2\,, & \sum_{n=3}^k T_n \le t < \sum_{n=2}^k T_n\,,\\
1 \,, & t\ge \sum_{n=2}^k T_n \;,
\end{cases}
\end{equation*}
for all $t\ge 0$. Clearly, the law of $(\ms X^k_t)$ is $\ms P_{1/k}$. Also, observe that $(\ms X^k_t)$ is related to $(\ms X_t)$ by
\begin{equation*}
\ms X^k_t \;=\; \ms X_{S_k + t} \;,\quad \forall t\ge 0 \;, \quad \textrm{where\; $S_k:=\sum_{n=k+1}^{\infty} T_n$} \;.
\end{equation*}
In particular, for all $t\ge 0$,
\begin{equation}\label{cc}
f(\ms X^k_t) \xrightarrow{a.s.} f(\ms X_{t}) \quad {\rm and} \quad \ms Lf(\ms X^k_t) \xrightarrow{a.s.} \ms Lf(\ms X_{t}) \;, \quad \textrm{as $k\uparrow \infty$} \;.
\end{equation}
Fix an arbitrary $f\in C^1(S)$, a continuous function $G:S^m \to \bb R$ and a finite set of times $0\le s_1 < \cdots < s_m \le s < t$. In virtue of Remark \ref{obs10}, we have
\begin{equation}\label{em1}
E\Big[ G(\ms X^k_{s_1},\dots,\ms X^k_{s_{m}}) \big\{ f(\ms X^k_t) - f(\ms X^k_s) - \int_s^t \ms Lf(\ms X^k_r)dr \big\} \Big] \;=\; 0 \;,
\end{equation}
for all $k\ge 1$. Letting $k\uparrow \infty$ in \eqref{em1} and using \eqref{cc} we get
\begin{equation}
E\Big[ G(\ms X_{s_1},\dots,\ms X_{s_{m}}) \big\{ f(\ms X_t) - f(\ms X_s) - \int_s^t \ms Lf(\ms X_r)dr \big\} \Big] \;=\; 0 \;.
\end{equation}
We have thus shown that $\ms P_0$ is a solution of the $(C^1(S),\ms L)$-martingale problem.

\subsection{Uniqueness starting at $0\in S$}
In this subsection we prove the uniqueness result that we used in Section \ref{teo12}. Let $\sigma$ stand for the exit time from $0\in S$, i.e.
\begin{equation}\label{esig1}
\sigma \;:=\; \inf\{ t\ge 0 : X_t \not = 0 \}\;.
\end{equation}
Clearly, $\sigma_k\downarrow \sigma$ pointwise. Notice that $\sigma$ is not a $(\mc G_t)$-stopping time.

\begin{proposition}\label{uni0}
There exists a unique probability measure $\ms P$ on $(D(\bb R_+,S),\mc G_{\infty})$ such that $\ms P\{X_0=0,\sigma=0\} = 1$ and 
$$
f(X_t) - \int_0^t \ms L f(X_s)ds\;, \quad t \ge 0
$$
is a $\ms P$-martingale for every $f\in \ms D_0$.
\end{proposition}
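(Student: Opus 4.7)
The plan is to handle existence and uniqueness separately. Existence is already in hand: the measure $\ms P_0$ constructed in the previous subsection solves the $(C^1(S),\ms L)$-martingale problem, hence a fortiori the $(\ms D_0,\ms L)$-problem since $\ms D_0\subset C^1(S)$. It satisfies $X_0=0$ by construction, and $\sigma=0$ almost surely because $S_k:=\sum_{n=k+1}^\infty T_n\downarrow 0$ a.s., which forces $\ms N_t<\infty$ (hence $\ms X_t>0$) for every $t>0$.

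For uniqueness, let $\ms P$ be any solution satisfying the hypothesis. I would identify the finite-dimensional distributions of $\ms P$ at positive times by pushing the process forward past the ``warm-up'' times $\sigma_k$ and letting $k\to\infty$. Fix $0<t_1<\cdots<t_m$ and bounded continuous $\phi_1,\dots,\phi_m:S\to\bb R$, and set $F(\omega):=\prod_{i=1}^m \phi_i(\omega(t_i))$. An approximation of $F$ by indicators of cylinder sets together with Lemma \ref{lmp} (applied at the stopping times $\sigma_k$, which are a.s.\ finite since $\sigma=0$ a.s.) yields
\begin{equation*}
E^{\ms P}\bigl[F\circ \vartheta_{\sigma_k}\bigr] \;=\; \int E^{\ms P_{X_{\sigma_k}(\omega)}}[F]\,\ms P(d\omega)\;.
\end{equation*}

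Under the hypothesis $\sigma=0$ one has $\sigma_k\downarrow 0$ $\ms P$-a.s., so by right-continuity of the paths at each $t_i$, $X_{\sigma_k+t_i}\to X_{t_i}$ $\ms P$-a.s., and bounded convergence gives LHS $\to E^{\ms P}[F]$. For the right-hand side, the key step is continuity at $0\in S$ of the map $x\mapsto E^{\ms P_x}[F]$. I would prove this directly from the explicit construction of $\ms P_0$: since $(\ms X_{S_k+t})_{t\ge 0}$ has law $\ms P_{1/k}$ and $S_k\downarrow 0$ a.s., right-continuity of $\ms X$ forces $\ms X_{S_k+t_i}\to \ms X_{t_i}$ a.s., and bounded convergence gives $E^{\ms P_{1/k}}[F]\to E^{\ms P_0}[F]$. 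On the other hand, right-continuity of $X$ at $0$ under $\ms P$ together with $X_0=0$ forces $X_{\sigma_k}\to 0$ $\ms P$-a.s. (the values $X_{\sigma_k}$ take in $\{1/j:1\le j\le k\}$ must eventually lie in every neighborhood of $0$), so a second bounded-convergence argument turns the right-hand side into $E^{\ms P_0}[F]$.

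Combining the two limits yields $E^{\ms P}[F]=E^{\ms P_0}[F]$ for every such $F$, which determines all finite-dimensional distributions of $\ms P$ at positive times; the time-$0$ marginal agrees with $\ms P_0$'s by hypothesis, so the two measures coincide on the $\pi$-system of cylinder sets generating $\mc G_\infty$ and hence $\ms P=\ms P_0$. The main obstacle I expect is the continuity statement $E^{\ms P_{1/k}}[F]\to E^{\ms P_0}[F]$, which is precisely the step where the ``coming down from infinity'' of Kingman's coalescent is used and which requires the explicit coupling above; every remaining step reduces to routine right-continuity and bounded-convergence manipulations.
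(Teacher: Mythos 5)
Your existence argument is fine and matches the paper's, and the broad structure of your uniqueness argument — shift the process by $\sigma_k$, apply Lemma \ref{lmp}, and take $k\to\infty$ — is the same as the paper's. But there is a real gap: the assertion that $\sigma_k$ is a.s.\ finite ``since $\sigma=0$ a.s.'' is not a valid deduction. The pointwise convergence $\sigma_k\downarrow\sigma=0$ is perfectly consistent with $\sigma_j(\omega)=\infty$ for finitely many small $j$: a priori, a solution of the martingale problem could leave $0$ immediately and yet fail to ever reach, say, level $1/2$, so that $\sigma_1=\sigma_2=\infty$ while $\sigma_j\downarrow 0$ for $j\ge3$. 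Without knowing $\ms P\{\sigma_k<\infty\}=1$, Lemma \ref{lmp} only gives you the identity on the event $\{\sigma_k<\infty\}$, and the rest of your limit argument for the right-hand side does not go through as written. The paper closes exactly this hole in step \eqref{ita} of the proof of Proposition \ref{lq2}: applying the identity \eqref{time1} to the event $\mc C=\{\sigma_1<\infty\}$ (on which $\ms P_{1/n}$ has full mass for every $n$) yields $\ms P\{\sigma_1<\infty\}=\ms P\{\sigma_k<\infty\}$ for every $k$, and letting $k\uparrow\infty$ with $\{\sigma_k<\infty\}\uparrow\{\sigma<\infty\}$ forces $\ms P\{\sigma_1<\infty\}=1$. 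You need some such bootstrap before you can invoke Lemma \ref{lmp} unconditionally.

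Once $\sigma_k<\infty$ a.s.\ is established, your variant works and is a slightly different route from the paper's. The paper proves the sharper statement $X_{\sigma_m}=1/m$ a.s.\ for every $m$ (step \eqref{itb}), which gives the clean exact identity $\ms P\circ\vartheta_{\sigma_k}^{-1}=\ms P_{1/k}$ of Proposition \ref{lq2} before any limit is taken. You instead content yourself with $X_{\sigma_k}\to 0$ a.s.\ plus continuity of $x\mapsto E^{\ms P_x}[F]$ along $1/n\to 0$; this is weaker but sufficient, since $X_{\sigma_k}(\omega)$ always lies in $\{1/j : j\le k\}$ and the two bounded-convergence passes you describe then identify $E^{\ms P}[F]$ with $E^{\ms P_0}[F]$. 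The trade-off is cosmetic: the paper's version avoids introducing continuity of the map $x\mapsto\ms P_x$, at the cost of the extra argument for $X_{\sigma_m}=1/m$. Either way, the finiteness of the $\sigma_k$ is the non-negotiable preliminary step you omitted.
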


Existence is, of course, a consequence of Lemma \ref{lem15}. Nevertheless, it follows from the conclusion of the previous subsection that $\ms P_0$ fulfils all the requirements. In order to show uniqueness we first improve the result obtained in Lemma \ref{lmp}.

\begin{proposition}\label{lq2}
Let $\ms P$ be a solution of the $(\dom,\ms L)$-martingale problem starting at $0\in S$. If $\ms P\{\sigma=0\}=1$ then
\begin{equation*}
\ms P\{ \vartheta_{\sigma_k}\in \mc C \} \;=\; \ms P_{1/k}(\mc C) \;,\quad \textrm{$\forall$ $k\ge 1$ and $\mc C\in \mc G_{\infty}$}\;.
\end{equation*}

\end{proposition}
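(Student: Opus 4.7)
The plan is to apply Lemma \ref{lmp} with the stopping time $\sigma_k$ after establishing two auxiliary facts: (a) $\ms P\{\sigma_k<\infty\}=1$, and (b) $X_{\sigma_k}=1/k$ $\ms P$-almost surely on $\{\sigma_k<\infty\}$. Granting (a) and (b), Lemma \ref{lmp} immediately gives
\begin{equation*}
\ms P\{\vartheta_{\sigma_k}\in\mc C\}
\;=\;\int_{\{\sigma_k<\infty\}}\ms P_{X_{\sigma_k}(\omega)}(\mc C)\,\ms P(d\omega)
\;=\;\ms P_{1/k}(\mc C)\cdot\ms P\{\sigma_k<\infty\}\;=\;\ms P_{1/k}(\mc C)\;.
\end{equation*}

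For (b), I would exhibit the test function $f\in\dom$ given by $f(1/j)=1$ for $1\le j\le k-1$ and $f(x)=0$ on $\{0\}\cup\{1/j:j\ge k\}$; a smooth interpolation on $[1/k,1/(k-1)]$ shows that $f$ is the restriction of a $C^1$ function, and $f$ is constant in a neighborhood of $0$, so $f\in\dom$. Inspection of \eqref{def1} yields $\ms Lf(x)=\binom{k}{2}\mf 1\{x=1/k\}$, because the discrete difference $f(1/(n-1))-f(1/n)$ vanishes for every $n\ge 2$ except $n=k$. Applying optional stopping at the bounded stopping time $\sigma_k\wedge T$ to the bounded $\ms P$-martingale $f(X_t)-\binom{k}{2}\int_0^t\mf 1\{X_s=1/k\}\,ds$ gives
\begin{equation*}
\int f(X_{\sigma_k\wedge T})\,d\ms P
\;=\;\binom{k}{2}\int\Big(\int_0^{\sigma_k\wedge T}\mf 1\{X_s=1/k\}\,ds\Big)\,d\ms P\;=\;0,
\end{equation*}
since $X_s<1/k$ throughout $[0,\sigma_k)$. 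But $f(X_{\sigma_k\wedge T})=\mf 1\{\sigma_k\le T,\,X_{\sigma_k}>1/k\}$, so letting $T\to\infty$ yields $\ms P\{\sigma_k<\infty,\,X_{\sigma_k}>1/k\}=0$, which together with $X_{\sigma_k}\ge 1/k$ is exactly (b).

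For (a), I would apply Lemma \ref{lmp} with an index $K\ge k$ and the event $\mc C=\{\sigma_k<\infty\}$. Using (b) with $K$ in place of $k$ gives $X_{\sigma_K}=1/K$ on $\{\sigma_K<\infty\}$, so the right-hand side of Lemma \ref{lmp} reduces to $\ms P_{1/K}\{\sigma_k<\infty\}\cdot\ms P\{\sigma_K<\infty\}$. Under $\ms P_{1/K}$ the process is a finite pure-birth chain on $\{1/K,\dots,1\}$ which reaches $1/k$ in finite time almost surely, so the first factor equals $1$. Since $\sigma_K\le\sigma_k$, on $\{\sigma_K<\infty\}$ we have $\{\sigma_k(\vartheta_{\sigma_K}X)<\infty\}=\{\sigma_k(X)<\infty\}$, and the inclusion $\{\sigma_k<\infty\}\subseteq\{\sigma_K<\infty\}$ turns the Lemma into $\ms P\{\sigma_k<\infty\}=\ms P\{\sigma_K<\infty\}$ for every $K\ge k$. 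Since $\sigma_K\downarrow\sigma=0$ under $\ms P$, the increasing events $\{\sigma_K<\infty\}$ exhaust a probability-one set, so $\ms P\{\sigma_K<\infty\}\to 1$ as $K\to\infty$, which gives (a). The main obstacle is the construction of $f$ in step (b): isolating $\ms Lf$ on the single state $1/k$ is precisely what forbids the process from overshooting $1/k$ at time $\sigma_k$; once (b) is in hand, (a) and the conclusion are clean consequences of Lemma \ref{lmp} together with the pure-birth structure of $\ms P_{1/K}$.
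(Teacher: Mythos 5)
Your proof is correct, and the key technical step is handled by a genuinely different and more elementary argument than the one in the paper. Both proofs reduce the proposition to the same two facts, namely that $\ms P\{\sigma_k<\infty\}=1$ and $X_{\sigma_k}=1/k$ $\ms P$-a.s.\ on $\{\sigma_k<\infty\}$, and both then conclude via Lemma~\ref{lmp}. The difference lies in how each proof establishes the identification $X_{\sigma_k}=1/k$, i.e.\ that the process cannot overshoot $1/k$ on exiting $A_k$.

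The paper establishes this (labelled there as \eqref{itb}) by a second application of Lemma~\ref{lmp} with the event $\mc B=\bigcup_n\mc B_n$, where $\mc B_n=\{X_0=1/n,\ X_{\sigma_m}=1/m\ \forall\, 1\le m\le n\}$, combined with a pointwise combinatorial argument: along $\sigma_k\downarrow\sigma=0$, choose $k'$ with $X_{\sigma_{k'}}<1/m'$, extract $n'$ with $\vartheta_{\sigma_{k'}}\omega\in\mc B_{n'}$, show $m'<n'\le k'$, and unwind the shift. Your approach instead constructs a single test function $f\in\dom$ with $f=1$ on $\{1,\dots,1/(k-1)\}$ and $f=0$ on $\{0\}\cup\{1/j:j\ge k\}$, observes that $\ms Lf=\binom k2\,\mf 1\{\cdot=1/k\}$, and applies optional stopping at $\sigma_k\wedge T$ to the martingale $M^f_t$. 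Since the time integral of $\ms Lf$ vanishes before $\sigma_k$, and $f(X_0)=0$, one reads off $\ms P\{\sigma_k\le T,\ X_{\sigma_k}>1/k\}=0$ directly. This is a clean derivation-from-the-generator argument that avoids the iterated strong Markov property and the case-splitting on indices; it is also slightly more general, in that it does not use the hypothesis $\ms P\{\sigma=0\}=1$ (only $\ms P\{X_0=0\}=1$). The price is that you then need the identification $X_{\sigma_K}=1/K$ \emph{before} proving $\ms P\{\sigma_K<\infty\}=1$, so your logical order is reversed relative to the paper: you prove (b) for all $K$ first, then deduce (a) from it. The paper's (a) (their \eqref{ita}) needs only the weaker observation $X_{\sigma_k}\ge 1/k$ (i.e.\ \eqref{obs12}) together with the fact that $\{\sigma_1<\infty\}$ has $\ms P_x$-probability one for every $x\in S\setminus\{0\}$, so there (a) and (b) are independent. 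Both orderings are sound; yours trades a slightly heavier reliance on (b) for a substantially simpler proof of (b).

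One small point worth making explicit: your construction of $f$ is vacuous for $k=1$, but there $(S\setminus A_1)=\{1\}$ is a singleton, so $X_{\sigma_1}=1$ on $\{\sigma_1<\infty\}$ trivially, and (b) is automatic. For $k\ge 2$ the argument works exactly as you wrote it.
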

\begin{proof}
We start showing that
\begin{equation}\label{ita}
\ms P \{ \sigma_m < \infty \,,\; \forall m\in \bb N \} \;=\; 1 \;.
\end{equation}
Let us denote
\begin{equation*}
\mc A \;:=\; \{\sigma_m < \infty\,,\;\forall m\in \bb N\} \;=\; \{ \sigma_1 < \infty \} \;.
\end{equation*}
Since $\ms P_{1/n}(\mc A)=1$ for any $n\in \bb N$ then applying equation \eqref{time1} for $\mc C=\mc A$ and using observation \eqref{obs12} we get
\begin{equation*}
\ms P \{ \vartheta_{\sigma_k} \in \mc A \,,\; \sigma_k<\infty \} \;=\; \ms P\{\sigma_k<\infty \} \;, \quad \forall k\in \bb N \;.
\end{equation*}
But $\sigma_k + \sigma_1\circ \vartheta_{\sigma_k} = \sigma_1$ and so $ \{\vartheta_{\sigma_k} \in \mc A \,,\; \sigma_k<\infty \}=\mc A$. Using this observation in the last displayed equation we get
\begin{equation*}
\ms P ( \mc A ) \;=\; \ms P\{\sigma_k<\infty \} \;, \quad \forall k\in \bb N \;.
\end{equation*}
Since $\{\sigma_k<\infty\}\uparrow \{\sigma<\infty\}$ then, letting $k\uparrow \infty$ in the previous equation, we get $\ms P(\mc A)=\ms P\{\sigma <\infty\}$ which equals one by assumption.

As second step, we prove that
\begin{equation}\label{itb}
\ms P\big\{ X_{\sigma_m}=1/m \,,\; \forall m\in \bb N \big\} \;=\; 1 \;.
\end{equation}
For it, consider the events
\begin{equation*}
\mc B_n \;:=\; \{ X_0 =1/n \;\;{\rm and}\;\; X_{\sigma_m}=1/m \;\; \textrm{for all}\; 1\le m\le n \} \;,\quad n\in \bb N
\end{equation*}
and $\mc B := \bigcup_{n\in \bb N} \mc B_n$. Since $\ms P_{1/n}(\mc B_n)=1$ for all $n\ge 1$, then, for all $k\in \bb N$, we have
\begin{equation*}
\ms P_{X_{\sigma_k}(\omega)}(\mc B) \;=\; 1 \;, \quad \forall \omega \in \{\sigma_k < \infty\} \;.
\end{equation*}
Applying \eqref{time1} for $\mc C=\mc B$ along with this last observation we get
\begin{equation*}
\ms P \{ \vartheta_{\sigma_k} \in \mc B \,,\; \sigma_k<\infty \} \;=\; \ms P\{\sigma_k<\infty\} \;=\; 1 \;, \quad \forall k\in \bb N \;.
\end{equation*}
We used \eqref{ita} in the last equality. Therefore,
\begin{equation}\label{pr1}
\ms P \big\{ \vartheta_{\sigma_k} \in \mc B \; \textrm{and} \; \sigma_k<\infty\,,\; \textrm{for all $k\ge 1$} \big\} \;=\;1 \;.
\end{equation}
Now \eqref{itb} follows from \eqref{pr1}, assumption $\ms P\{X_0=0\,,\;\sigma=0\}=1$ and the following observation
\begin{equation*}
\{X_0=0 \,,\; \sigma =0 \,,\; \forall k\ge 1\,,\; \vartheta_{\sigma_k} \in \mc B\,,\;\sigma_k<\infty \} \; \subseteq \; \{ X_{\sigma_m}=1/m \,,\; \forall m\in \bb N \}
\end{equation*}
To prove this inclusion, fix some $\omega$ in the event of the left hand side and fix an arbitrary $m'\in \bb N$. Since $\sigma_k(\omega)\downarrow \sigma(\omega) = 0$ then $X_{\sigma_k}(\omega) \to  X_0(\omega) = 0$ as $k\uparrow \infty$ and so
\begin{equation}\label{peq1}
\textrm{$\exists$ ${k'}\in \bb N$ such that $X_{\sigma_{{k'}}}(\omega)< 1/m'$} \;.
\end{equation}
On the other hand, $\vartheta_{\sigma_{k}}(\omega)\in \mc B$ for all $k\in \bb N$ and so $\exists$ $n' \in \bb N$ such that 
\begin{equation}\label{peq2}
\vartheta_{\sigma_{{k'}}}(\omega)\in \mc B_{n'} \;.
\end{equation}
In virtue of \eqref{peq1} and \eqref{peq2} we necessarily have
\begin{equation*}
m' \;<\; n' \;\le\; {k'}
\end{equation*}
because
\begin{equation*}
1/k' \;\stackrel{\mbox{\tiny\eqref{obs12}}}{\le} \; X_0 \circ \vartheta_{\sigma_{{k'}}}(\omega) \;\stackrel{\mbox{\tiny\eqref{peq2}}}{=}\; 1/n' \;=\;X_0 \circ \vartheta_{\sigma_{{k'}}}(\omega)\;\stackrel{\mbox{\tiny\eqref{peq1}}}{<}\; 1/m' \;.
\end{equation*}
From \eqref{peq2} it follows that
\begin{equation*}
X_{\sigma_m} \circ \vartheta_{\sigma_{k'}}(\omega) \;=\; 1/m\;, \quad \forall\; 1\le m \le n' \;.
\end{equation*}
Since $m'<n'$ in particular we have
\begin{equation*}
X_{\sigma_{m'}} \circ \vartheta_{\sigma_{k'}}(\omega) \;=\; 1/m'\;.
\end{equation*}
But $X_{\sigma_{m'}} \circ \vartheta_{\sigma_{k'}}(\omega)=X_{\sigma_{m'}}(\omega)$ since $m'<k'$ and so $X_{\sigma_{m'}}(\omega)=1/m'$. This concludes the proof of the desired inclusion. 

Finally, the desired result follows from \eqref{itb} and \eqref{time1}.
\end{proof}

\begin{proof}[Proof of Proposition \ref{uni0}]
Let $\ms P$ be a probability satisfying the stated assumptions and let $E$ and $E_{1/k}$ stand for expectation with respect to $\ms P$ and $\ms P_{1/k}$ respectively. Fix an arbitrary $n\in \bb N$ some $0\le t_1 < t_2< \cdots < t_n$ and a bounded continuous
function $F:S^n\to \bb R$. In virtue of \eqref{lq2} we have
\begin{equation*}
E \big[F(X_{\sigma_k + t_1},\dots,X_{\sigma_k + t_n})\big] 
\; = \;   E_{1/k}\big[ F(X_{t_1},\dots,X_{t_n}) \big] \;, \quad \forall k\in \bb N\;.
\end{equation*}
But $(X_{\sigma_k + t_1},\dots,X_{\sigma_k + t_n}) \to (X_{t_1},\dots,X_{t_n})$ $\ms P$-a.s. as $k\uparrow \infty$ and so
\begin{equation*}
E\big[ F(X_{t_1},\dots,X_{t_n}) \big] \;=\; 
\lim_{k\to \infty} E_{1/k}\big[ F(X_{t_1},\dots,X_{t_n}) \big] \;.
\end{equation*}
This guarantees the desired uniqueness.
\end{proof}

\subsection{Proof of Proposition \ref{prop1}} 
In virtue of Remark \ref{obs10} and Lemma \ref{unik}, in order to conclude the proof of Proposition \ref{prop1}, it remains to prove that $\ms P_0$ is the unique solution of the $(C^1(S),\ms L)$-martingale problem starting at $0\in S$.

Observe that $fg\in C^1(S)$ for all $f,g\in C^1(S)$. We shall make use of the \emph{carr\'e du champ} corresponding to $(C^1(S),\ms L)$:
\begin{equation*}
\Gamma(f,g) \;:=\; {\ms L}({fg}) - {g}\ms L {f} - {f}\ms L {g} \;, \quad \textrm{for every $f,g\in C^1(S)$} \;.
\end{equation*}
Clearly, $\Gamma(f,g)$ turns out to be continuous for each $f,g\in C^1(S)$. Since $\ms L$ acts as a derivation at $0\in S$ we have
\begin{equation}\label{deri}
\Gamma(f,g)(0)\;=\;0\;,\quad \forall f,g\in C^1(S) \;. 
\end{equation}
Recall definition of $(M^f_t)$ given in \eqref{mm1} for each $f\in C^1(S)$.

\begin{lemma}\label{comp}
Let $\ms P$ be any solution of the $(C^1(S),\ms L)$-martingale problem. For all $f,g\in C^1(S)$, the process
\begin{equation*}
M^f_t M^g_t - \int_0^t \Gamma(f,g)(X_s)\, ds \;,
\quad t\ge 0 \;,
\end{equation*}
is a $\ms P$-martingale with respect to $(\mc G_t)$.
\end{lemma}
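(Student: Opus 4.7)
Since $f,g\in C^{1}(S)$, the product $fg$ also lies in $C^{1}(S)$, and by the very definition of the carr\'e du champ we have the pointwise identity $\ms L(fg) = f\,\ms Lg + g\,\ms Lf + \Gamma(f,g)$. The hypothesis on $\ms P$ therefore yields that $M^{f}$, $M^{g}$, and $M^{fg}$ are all $\ms P$-martingales. Compactness of $S$ ensures that $f, g, \ms Lf, \ms Lg, \Gamma(f,g)$ are bounded, so every integrability requirement and every Fubini exchange invoked below is automatic.

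Fix $0 \le s \le t$ and set $N_t := M^{f}_{t} M^{g}_{t} - \int_0^t \Gamma(f,g)(X_r)\,dr$. The elementary identity
\begin{equation*}
M^{f}_{t} M^{g}_{t} - M^{f}_{s} M^{g}_{s} \,=\, M^{f}_{s}(M^{g}_{t}-M^{g}_{s}) + M^{g}_{s}(M^{f}_{t}-M^{f}_{s}) + (M^{f}_{t}-M^{f}_{s})(M^{g}_{t}-M^{g}_{s})\;,
\end{equation*}
combined with the $\mc G_{s}$-measurability of $M^{f}_{s}, M^{g}_{s}$ and the martingale property of $M^{f}, M^{g}$, reduces the identity $E[N_t - N_s \mid \mc G_s]=0$ to
\begin{equation*}
E\bigl[(M^{f}_{t}-M^{f}_{s})(M^{g}_{t}-M^{g}_{s}) \,\bigm|\, \mc G_{s}\bigr] \,=\, E\Bigl[\int_{s}^{t}\Gamma(f,g)(X_{r})\,dr \,\Bigm|\, \mc G_{s}\Bigr]\;.
\end{equation*}

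To prove this reduced identity, write $\Delta f := f(X_t)-f(X_s)$, $I^{f} := \int_s^t \ms Lf(X_r)\,dr$, and analogously $\Delta g, I^{g}$, so that $M^{f}_t - M^{f}_s = \Delta f - I^{f}$. Expand the left-hand side into $\Delta f\,\Delta g - \Delta f\,I^{g} - \Delta g\,I^{f} + I^{f}I^{g}$ and compute each conditional expectation. The term $E[\Delta f\,\Delta g \mid \mc G_s]$ is handled by rewriting the product as $f(X_t)g(X_t) - f(X_s)g(X_t) - f(X_t)g(X_s) + f(X_s)g(X_s)$ and invoking the martingale properties of $M^{fg}, M^{g}, M^{f}$, which yields
\begin{equation*}
E\Bigl[\int_s^t(f\,\ms Lg + g\,\ms Lf + \Gamma(f,g))(X_r)\,dr \,\Bigm|\, \mc G_s\Bigr] - f(X_s)\,E[I^{g} \mid \mc G_s] - g(X_s)\,E[I^{f} \mid \mc G_s]\;.
\end{equation*}
For the cross term $E[\Delta f\,I^{g} \mid \mc G_s]$, applying Fubini and conditioning on $\mc G_r$ for each $r\in[s,t]$ (using the identity $E[\Delta f \mid \mc G_r] = f(X_r)-f(X_s) + E[\int_r^t \ms Lf(X_u)\,du \mid \mc G_r]$ from the $M^{f}$-martingale property) produces $E[\int_s^t \ms Lg(X_r)(f(X_r)-f(X_s))\,dr \mid \mc G_s]$ plus the iterated integral of $\ms Lg(X_r)\ms Lf(X_u)$ over the triangle $\{s \le r \le u \le t\}$. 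The term $E[\Delta g\,I^{f} \mid \mc G_s]$ is handled symmetrically, and $E[I^{f}I^{g} \mid \mc G_s]$ is the integral of $\ms Lf(X_u)\ms Lg(X_r)$ over the full square $[s,t]^{2}$.

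Assembly is then straightforward: the $f(X_s), g(X_s)$ boundary terms from $E[\Delta f\,\Delta g \mid \mc G_s]$ are cancelled by the $f(X_s), g(X_s)$ pieces extracted from $-E[\Delta f\,I^{g} \mid \mc G_s]$ and $-E[\Delta g\,I^{f} \mid \mc G_s]$; the single-integral terms $\int_s^t f(X_r)\ms Lg(X_r)\,dr$ and $\int_s^t g(X_r)\ms Lf(X_r)\,dr$ coming from the $\ms L(fg)$-decomposition cancel against their counterparts in the cross terms; and by the change of variables $(r,u) \mapsto (u,r)$, the two triangular iterated integrals arising from $-\Delta f\,I^{g}$ and $-\Delta g\,I^{f}$ together cover the full square $[s,t]^{2}$ and thus exactly compensate $+E[I^{f}I^{g} \mid \mc G_s]$. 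What survives is precisely $E[\int_s^t \Gamma(f,g)(X_r)\,dr \mid \mc G_s]$, which proves the lemma. The only delicate step is the Fubini bookkeeping around the triangle-versus-square decomposition of $[s,t]^{2}$; every other manipulation is a direct invocation of the martingale hypothesis combined with the carr\'e du champ identity.
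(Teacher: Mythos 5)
Your proof is correct, and it takes a genuinely different route from the paper's. The paper writes $f(X_t)=M^f_t+V^f_t$ with $V^f_t=\int_0^t\ms Lf(X_s)\,ds$, multiplies the two such decompositions, applies the Stieltjes integration-by-parts formula $V^f_tV^g_t=\int_0^tV^f_s\,dV^g_s+\int_0^tV^g_s\,dV^f_s$, and then invokes \cite[Theorem~1.2.8]{sv79} to recognize that $M^f_tV^g_t-\int_0^tM^f_s\,dV^g_s$ (and its symmetric twin) are martingales; the surviving terms reorganize into $M^{fg}_t+\int_0^t\Gamma(f,g)(X_s)\,ds$. You instead fix $s\le t$, reduce the martingale identity to $E[(M^f_t-M^f_s)(M^g_t-M^g_s)\mid\mc G_s]=E[\int_s^t\Gamma(f,g)(X_r)\,dr\mid\mc G_s]$, expand the increment product into the four terms $\Delta f\,\Delta g$, $-\Delta f\,I^g$, $-\Delta g\,I^f$, $I^fI^g$, and compute each conditional expectation directly by conditioning on $\mc G_r$ at intermediate times and Fubini. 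In effect you are re-proving, in the specific case at hand, exactly what the cited Stroock--Varadhan lemma asserts abstractly --- that a martingale times a predictable finite-variation process, corrected by the natural stochastic integral, is again a martingale. Your version is longer and requires careful triangle-versus-square bookkeeping in the cancellations, but it is entirely self-contained: one needs no external reference beyond the martingale property of $M^f$, $M^g$, $M^{fg}$ and boundedness on the compact space $S$. The paper's version is shorter but borrows a black box. Both are valid; the choice is a matter of taste and audience.
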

\begin{proof}
Fix some $f,g\in C^1(S)$. Denote
\begin{equation*}
V^f_t \;:=\; \int_0^t \ms Lf(X_s) \, ds \quad {\rm and} \quad V^g_t \;:=\; \int_0^t \ms Lg(X_s) \, ds \;, \quad t\ge 0\;,
\end{equation*}
so that, for all $t\ge 0$,
\begin{equation*}
M^f_t + V^f_t \;=\; f(X_t) \quad {\rm and} \quad M^g_t + V^g_t \;=\; g(X_t)
\end{equation*}
By multiplying these equalities we get
\begin{equation}\label{mmu}
M^f_t M^g_t + V^f_t V^g_t + M^f_t V^g_t + V^f_t M^g_t \;=\; (fg)(X_t) \;.
\end{equation}
By using
\begin{equation*}
(fg)(X_t) \;=\; M^{fg}_t + \int_0^{t} \ms L(fg)(X_s) ds \;,\quad t\ge 0 \;,
\end{equation*}
along with
\begin{equation*}
V^f_t V^g_t \;=\; \int_0^t V^f_s d V^g_s + \int_0^t V^g_s d V^f_s \;,\quad t\ge 0 \;,
\end{equation*}
in equality \eqref{mmu} we get
\begin{multline}\label{mmu2}
M^f_t M^g_t + M^f_t V^g_t + V^f_t M^g_t \\ =\; M^{fg}_t + \int_0^{t} \ms L(fg)(X_s) ds - \int_0^t V^f_s d V^g_s - \int_0^t V^g_s d V^f_s  \;.
\end{multline}
If we denote, for all $t\ge 0$,
\begin{equation}\label{mart2}
M^1_t \;:=\; M^f_t V^g_t - \int_0^t M^f_s d V^g_s  \quad {\rm and} \quad M^2_t \;:=\; M^g_t V^f_t - \int_0^t M^g_s d V^f_s \;,
\end{equation}
then equality \eqref{mmu2} can be rewritten as
\begin{equation}\label{mmu3}
M^f_t M^g_t + M^1_t  + M^2_t  \; = \; M^{fg}_t + \int_0^t \Gamma(f,g)(X_s)ds  \;.
\end{equation}
By assumption, $(M^{fg}_t)$ is a $\ms P$-martingale. In addition, in virtue of \cite[Theorem 1.2.8]{sv79}, $(M^1_t)$ and $(M^2_t)$ are also $\ms P$-martingales. Therefore the desired result follows from \eqref{mmu3}.
\end{proof}

We now use observation \eqref{deri} to prove that $0\in S$ is an instantaneous state for any solution starting at $0$.
\begin{lemma}
\label{leh}
For any solution $\ms P$ of the $(C^1(S),\ms L)$-martingale problem
starting at $0\in S$ we have $\ms P \{\sigma = 0\} = 1$.
\end{lemma}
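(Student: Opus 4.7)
The plan is to apply Lemma \ref{comp} with the test function $f(x)=x$, which belongs to $C^1(S)$ as the restriction to $S$ of the identity on $\bb R$. A direct computation from \eqref{def1} shows that $(\ms L f)(y) = 1/2$ for every $y \in S \setminus \{1\}$ and $(\ms L f)(1) = 0$; since $1$ is absorbing for $\ms L$, the associated martingale takes the explicit form $M^f_t = X_t - \tfrac{1}{2}(t \wedge \tau_1)$ with $\tau_1 := \inf\{s \ge 0 : X_s = 1\}$. Another direct computation using $f^2 \in C^1(S)$ yields $\Gamma(f,f)(1/n) = 1/[2n(n-1)]$ for $n \ge 2$, $\Gamma(f,f)(1) = 0$, and $\Gamma(f,f)(0) = 0$ (the last being precisely observation \eqref{deri}). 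The key consequence is that $\Gamma(f,f)$ is small near $0$: on the set $A_k = \{y \in S : y < 1/k\} = \{0\} \cup \{1/n : n \ge k+1\}$ it is bounded above by $1/[2k(k+1)]$.

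Next I would apply Lemma \ref{comp} and the optional sampling theorem at the bounded stopping time $t \wedge \sigma_k$ to obtain
\[
E_{\ms P}\big[(M^f_{t \wedge \sigma_k})^2\big]
\;=\; E_{\ms P}\Big[\int_0^{t \wedge \sigma_k} \Gamma(f,f)(X_s)\, ds\Big]
\;\le\; \frac{t}{2\, k(k+1)}\;,
\]
since $X_s \in A_k$ for every $s < \sigma_k$, as follows at once from the definition of $\sigma_k$.

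To conclude I would exploit the deterministic behaviour of $M^f$ on the event $\{\sigma > t\}$: there $X_s = 0$ throughout $[0,t]$, so $\sigma_k > t$ for every $k$, $t \wedge \sigma_k = t$, and $M^f_{t\wedge\sigma_k} = -t/2$. Therefore $(M^f_{t\wedge\sigma_k})^2 \ge (t^2/4)\,\mf 1\{\sigma > t\}$, which combined with the previous estimate yields $(t^2/4)\,\ms P\{\sigma > t\} \le t/[2k(k+1)]$. Letting $k\to\infty$ we get $\ms P\{\sigma > t\} = 0$ for every $t>0$, and continuity of measure as $t\downarrow 0$ produces $\ms P\{\sigma = 0\} = 1$. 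The main conceptual obstacle is identifying a test function that serves two roles at once: one needs $f'(0) \neq 0$ so that $M^f$ drifts deterministically on $\{\sigma > t\}$ (giving the lower bound $(t^2/4)\,\mf 1\{\sigma > t\}$), while simultaneously $\Gamma(f,f)$ must vanish at $0$ and stay small nearby (so that the $L^2$-norm of $M^f_{t\wedge\sigma_k}$ tends to zero as $k\to\infty$); the identity function accomplishes both requirements.
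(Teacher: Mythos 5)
Your proof is correct and rests on exactly the same ingredients as the paper's: the coordinate function $f(x)=x$, the carr\'e du champ identity of Lemma~\ref{comp} stopped at the times $\sigma_k$, and the vanishing of $\Gamma(f,f)$ near and at~$0$. The only organizational difference is the final step. The paper passes $k\uparrow\infty$ inside $E\big[(M_{t\wedge\sigma_k})^2\big]=E\big[\int_0^{t\wedge\sigma_k}\Gamma(f,f)(X_s)\,ds\big]$ (by bounded convergence, using $\sigma_k\downarrow\sigma$ and right-continuity of paths) to obtain $M_{t\wedge\sigma}=0$ $\ms P$-a.s., then reads off a contradiction on $\{t<\sigma\}$ from the explicit identity $X_{t\wedge\sigma}=\tfrac12(t\wedge\sigma)$. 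You instead keep $k$ finite, use the sharp bound $\Gamma(f,f)\le 1/[2k(k+1)]$ on $A_k$ together with $M^f_{t\wedge\sigma_k}=-t/2$ on $\{\sigma>t\}$ to get $(t^2/4)\,\ms P\{\sigma>t\}\le t/[2k(k+1)]$, and only then let $k\uparrow\infty$. This keeps all manipulations at genuine stopping times and is a perfectly valid, marginally tidier rearrangement of the same argument. (One small aside: your explicit formula $M^f_t = X_t-\tfrac12(t\wedge\tau_1)$ silently assumes that $1$ is absorbing for the \emph{process}, not just for $\ms L$; this would need an extra justification, but since you never actually use it in the estimates it is harmless.)
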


\begin{proof} Let $\ms P$ be a probability satisfying the assumptions. Define $f:S\to \bb R$ as the inclusion function i.e. $f(x)=x$, for $x\in S$. Clearly $f \in C^1(S)$ and so
\begin{equation}
\label{mar1}
M_t \;:=\; X_t - \int_0^t ({\ms L}{f})(X_s)\, ds \;,\quad t\ge 0
\end{equation}
is a $\ms P$-martingale. Since $\sigma_k$ is a stopping time then it follows from Lemma \ref{comp} that
\begin{equation*}
(M_{t\land\sigma_k})^2 - \int_0^{t\land\sigma_k} \Gamma(f,f)(X_s)\, ds \;,
\quad t\ge 0 \;,
\end{equation*}
is a $\ms P$-martingale. In particular, for all $t\ge 0$ we have
\begin{equation}
\label{mc1}
{E}\big[ (M_{t\land \sigma_k})^2 \big] 
\;=\; {E} \Big[ \int_0^{t\land \sigma_k} 
\Gamma(f,f)(X_s)\, ds \Big] \;, 
\quad  \forall k\in \bb N \;,
\end{equation}
(since $M_0=0$, $\ms P$-a.s.) where $E$ represents the expectation with respect to $\ms P$. By the bounded convergence theorem, letting $k\uparrow\infty$ in \eqref{mc1} we get
\begin{equation}\label{mc2}
{ E}\big[ (M_{t\land \sigma})^2 \big] 
\;=\; { E} \Big[ \int_0^{t\land \sigma} 
\Gamma(f,f)(X_s)\, ds \Big]\;,\quad \forall t\ge 0 \;.
\end{equation}
Since $\{s<\sigma\} \subseteq \{X_s=0\}$, the right hand side in the above equation equals
\begin{equation*}
{E} [ t\land \sigma ] \;\Gamma(f,f)(0)
\end{equation*}
which vanishes as noticed in observation \eqref{deri}. Therefore, from \eqref{mc2} we conclude that 
\begin{equation*}
\ms P[ M_{t\land \sigma} = 0\,,\; \forall t\ge 0 ] \;=\; 1 \;.
\end{equation*}
Using this fact in \eqref{mar1} we get that, $\ms P$-a.s.,
\begin{equation*}
X_{t\land \sigma} \;=\; 
\int_0^{t\land \sigma} ({\ms L}{f})(X_s)\, ds 
\;=\; \frac 12 \big( t\land \sigma  \big) 
\;,\quad \forall t\ge 0 \;.
\end{equation*}	
But, for any $t>0$, we have on $\{t< \sigma \}$ that
\begin{equation*}
X_{t\land \sigma} \;=\; X_t \;=\; 0 \;\not=\; \frac{1}{2} \big(t\land \sigma\big) \;.
\end{equation*}
Hence $\ms P \{t < \sigma\} = 0$, $\forall$ $t>0$ and we are done.
\end{proof}

It follows from Lemma \ref{leh} and Proposition \ref{uni0} that
$\ms P_0$ is the only solution of the $(C^1(S),\ms L)$-martingale
problem.

\end{document}